\newcommand{\Spec}{\operatorname{Spec}}
\newcommand{\Hom}{\operatorname{Hom}}
\newcommand{\End}{\operatorname{End}}
\newcommand{\id}{\operatorname{id}}
\newcommand{\Frac}{\operatorname{Frac}}
\newcommand{\Gal}{\operatorname{Gal}}
\newcommand{\Frob}{\operatorname{Frob}}
\newcommand{\Lie}{\operatorname{Lie}}
\newcommand{\bF}{\mathbb{F}}
\newcommand{\bG}{\textbf{G}}
\newcommand{\bN}{\mathbb{N}}
\newcommand{\bP}{\textbf{P}}
\newcommand{\bQ}{\mathbb{Q}}
\newcommand{\bZ}{\mathbb{Z}}
\newcommand{\cO}{\mathcal{O}}
\newcommand{\fa}{\mathfrak{a}}
\newcommand{\fb}{\mathfrak{b}}
\newcommand{\fc}{\mathfrak{c}}
\newcommand{\fj}{\mathfrak{j}}
\newcommand{\fm}{\mathfrak{m}}
\newcommand{\fp}{\mathfrak{p}}
\newcommand{\fP}{\mathfrak{P}}
\title{Pour une définition commune des courbes elliptiques et modules de Drinfeld}
\date {Juin 2023}
\author{Quentin Gazda}
\address{Centre de Mathématiques Laurent Schwartz (CMLS), \'Ecole Polytechnique, Cour Vaneau F-91120 Palaiseau}
\email{quentin@gazda.fr}
\urladdr{https://quentin.gazda.fr}
\author{Damien Junger}
\address{Mathematisches Institut, Universit\"at M\"unster, Fachbereich Mathematik und Informatik der Universität M\"unster, Orl\'eans-Ring 10, D-48149 M\"unster.}
\email{djunger@uni-muenster.de}
\begin{document}

\maketitle

\tableofcontents

\section{Introduction}
\subsection{Motivations}
Il est souvent énoncé que le module de Carlitz est à l'anneau des polynômes univariés sur un corps fini ce que le groupe multiplicatif est à l'anneau des entiers. Cette analogie s'étend au cas de "rang $2$", où les modules de Drinfeld jouent un rôle semblable à celui des courbes elliptiques. Ce travail est né dans l'optique de trouver une définition commune à ces objets, ne dépendant que de l'anneau des coefficients, et ainsi, relever cette analogie en une théorie commune. \\

Soit $A$ un anneau de Dedekind\footnote{Par convention, un \emph{anneau de Dedekind} est de dimension de Krull égale à $1$ (i.e. on exclura donc le cas des corps). } finiment engendré (sur $\bZ$). Soit également $\bG$ un schéma en $A$-modules sur un corps $L$; par \emph{schéma en $A$-modules}, nous entendons un foncteur
\[\bG:\mathbf{Alg}_L\longrightarrow \mathbf{Mod}_A\]
de la catégorie des $L$-algèbres vers celles des $A$-modules représenté par les points d'un $L$-schéma. Nous imposerons $\bG$ \emph{algébrique}\footnote{Cela signifie que le morphisme de schéma $X\to \Spec L$ associé est de type fini.}, \emph{connexe} et \emph{lisse}, ce qui signifie imposer les conditions éponymes au schéma sous-jacent. On peut considérer les modules de Tate $\ell$-adique de $\bG$ définis comme suit: pour $\ell$ un idéal maximal de $A$, $\operatorname{T}_\ell \bG$ est la limite inverse des points de $\ell^n$-torsion de $\bG$ sur une clôture séparable $L^s$ de $L$:
\[\operatorname{T}_\ell \bG:= \varprojlim \bG[\ell^n](L^s)=\varprojlim \left(\bG[\ell](L^s)\longleftarrow \bG[\ell^2](L^s)\longleftarrow \cdots \right).\]
C'est naturellement un module sur l'anneau $A_\ell$, obtenu en complétant $A$ le long de $\ell$. Le module de Tate $\ell$-adique ne dépend pas du choix de $L^s$ à isomorphismes de $A_\ell$-modules près.\\

Soit $r$ un entier positif non nul. Nous dirons de $\bG$ qu'il est \emph{de rang $r$} si, pour tout idéal maximal $\ell\subset A$, le module $\operatorname{T}_\ell \bG$ est libre de rang $r$ sur $A_\ell$ (cf. définition \ref{def:rang}).

\medskip
En introduction, et pour l'énoncé des résultats, nous donnons un nom à de tels objets:
\medskip
\begin{defi}\label{def:pre-module-elementaire}
Un schéma en $A$-modules algébrique connexe lisse sur $L$ de dimension $1$ et de rang $r$ est appelé \emph{pré-$A$-module élémentaire sur $L$}. On appellera $A$ \emph{l'anneau des coefficients}. 
\end{defi}

\medskip
\begin{exem}
Donnons ici quelques exemples de pré-modules élémentaires qui motivent notre définition, et renvoyons à la section \ref{sec:expo} pour les détails.
\begin{enumerate}[label=$-$]
\item Le groupe multiplicatif $\bG_m$ sur $\bQ$ en est l'exemple le plus simple avec $A=\bZ$ et de rang~$1$. On montrera d'ailleurs que les formes de $\bG_m$ sont les seuls pré-modules élémentaires pour ces paramètres (voir le théorème \ref{thm:classification} plus bas). 
\item Les courbes elliptiques sont également des exemples de pré-modules élémentaires de coefficient $\bZ$, cette fois-ci de rang $2$ . Les courbes elliptiques à multiplication complexe par $\cO_K$ -- où $K$ est quadratique imaginaire -- vu comme des pré-modules élémentaires de coefficient $\cO_K$ sont de rang $1$.
\item Soit $\bF=\bF_q$ un corps fini à $q$ éléments. Les \emph{modules de Drinfeld de rang $r$} sont également des exemples de pré-modules élémentaires et de rang $r$, de coefficient $A$ l'anneau des fonctions régulières sur une $\bF$-courbe projective lisse privée d'un point fermé. Rappelons que pour $L$ une extension finie de $\bF(C)$, un \emph{module de Drinfeld de rang $r$ sur $L$} est un foncteur
\[
E: \mathbf{Alg}_L\longrightarrow \mathbf{Mod}_A
\]
qui à une $L$-algèbre $R$ associe le $A$-module $E(R)$ construit ainsi : en tant qu'espace vectoriel sur $\bF$, $E(R)$ est $R$ lui-même, l'action de $a\in A$ est quant à elle déterminée par l'existence de coefficients $(a)_i\in L$ pour lesquels:
\[
\text{Pour~tout~}x\in R:\quad a\cdot x:=(a)_0x+(a)_1x^q+(a)_2x^{q^2}+\cdots+(a)_{rd}x^{q^{rd}}
\]
où $d=\deg(a)$ et $(a)_{rd}\neq 0$. L'association $a\mapsto (a)_0$ définit un morphisme d'anneaux $\delta_E:A\to L$ appelé \emph{morphisme caractéristique de $E$}. On dit que $E$ est \emph{générique} si $\delta_{E}$ coïncide à l'inclusion $A\subset L$ (cf. la sous-section \ref{subsec:drinfeld}).
\item Le \emph{module de Carlitz} est l'exemple le plus simple de module de Drinfeld générique pour $A=\bF[t]$, où $t$ agit par $t\cdot x:=tx+x^q$.
\end{enumerate}
\end{exem}
\medskip

Bien que distincts en apparence, les deux foncteurs $\bG_m$ et $\mathbf{C}$ jouent des rôles homologues en arithmétique lorsque l'on suit l'analogie $(\bZ,\bQ)\sim (\bF[t],\bF(t))$. Par exemple, les extensions finies abéliennes de $\bQ$ sont obtenues en adjoignant les éléments de torsion de $\bG_m(\bar{\bQ})$ (théorème de Kronecker-Weber); c'est également le cas des extensions finies abéliennes de $\bF(t)$ en adjoignant cette fois-ci la torsion du module de Carlitz\footnote{En adjoignant la torsion de $\mathbf{C}(\bF(\theta)^s)$ on n'obtiendrait en fait que l'extension abélienne maximale totalement ramifiée au point $\infty$ de $\bP^1_{\bF}$. Pour obtenir l'extension abélienne maximale, il faudrait également ajouter la torsion du module de Carlitz associé aux coefficients $\bF[1/t]$ par exemple.} (voir \cite{carlitz1,carlitz2}). En rang $2$, il est également d'usage de comparer les courbes elliptiques et les modules de Drinfeld de rang $2$.\\

On cherche alors une définition commune à $\bG_m$ et $\mathbf{C}$, plus généralement courbes elliptiques et modules de Drinfeld, ne dépendant que du corps global. Contrairement au cas de coefficient~$\bZ$, il existe pléthore de pré-modules élémentaires et de rang~$1$ de coefficient $\bF[t]$ qui ne sont pas des formes de $\mathbf{C}$. La notion de pré-module élémentaire est alors insuffisante pour obtenir la classification recherchée en caractéristique $p>0$, et une hypothèse additionnelle satisfaite simultanément par les objets listés ci-dessus est la bienvenue. Dans ce texte, nous étudierons les deux conditions suivantes indépendamment.\\

\paragraph*{Module élémentaire de type $(1)$} Soit $K$ le corps des fractions de $A$ et soit $L$ une extension finie de $K$. Soit $\bG$ un pré-$A$-module élémentaire sur $L$. Son espace tangent $\Lie_\bG(L)$ est un $L$-espace vectoriel de dimension $1$ muni par fonctorialité d'une structure de $A$-module qui commute à celle de $L$-espace vectoriel. C'est \emph{l'action tangentielle de $A$}. Comme $\End_L(\Lie_\bG(L))$ s'identifie canoniquement à $L$ comme anneau, on obtient un morphisme d'anneaux $\delta_\bG:A\to L$ que l'on appelle \emph{morphisme caractéristique de $\bG$}. On émet la définition suivante:
\medskip
\begin{defi}[cf. définition \ref{def:type1}]
Nous dirons de $\bG$ que c'est un module élémentaire de type $(1)$ si $\delta_\bG$ coïncide à l'inclusion $A\subset L$. 
\end{defi}
\medskip
\begin{rema}
Observons que la condition introduite est trivialement satisfaite lorsque ${A=\bZ}$ car $\bZ$ est initial. Par ailleurs, elle est vérifiée par tout module de Drinfeld générique, dont l'action tangentielle de $a\in A$ n'est autre que 
\[
\partial(x\mapsto a\cdot x):=\partial_x(ax+(a)_1x^q+\cdots )=a.
\]
\end{rema}
\medskip
\paragraph*{Module élémentaire de type $(2)$} Soit $\cO_L$ la clôture intégrale de $A$ dans $L$. Soit $\bG$ un pré-$A$-module élémentaire sur $L$. Nous dirons de $\bG$ que c'est un module élémentaire de type $(2)$ si la représentation Galoisienne $\operatorname{T}_\ell \bG$ est \emph{indépendante de $\ell$} dans le sens suivant:
\medskip
\begin{defi}[cf. définition \ref{def:type2}]
Nous dirons de $\bG$ que c'est un \emph{module élémentaire de type~$(2)$} s'il existe un ensemble fini $S$ d'idéaux maximaux de $\cO_L$ pour lequel, pour tout $\fP$ un idéal maximal de $\cO_L$ en dehors de $S$ et $\ell$ un idéal maximal de $A$ différent de $\fp:=\fP\cap A$,
\begin{enumerate}[label=$(\alph*)$]
    \item La représentation $\operatorname{T}_\ell \bG$ est non ramifiée en $\fP$; i.e. le groupe d'inertie $I_\fP\subset G_L$ en $\fP$ agit trivialement, et, 
    \item Le déterminant $s(\fP)\in A_\ell$ de l'action de $\Frob_\fP\in G_L/I_\fP$ sur $\operatorname{T}_\ell \bG$ appartient à $A$ et est indépendant de $\ell$.
\end{enumerate}
\end{defi}

\subsection{Présentation des résultats}
L'existence de pré-module élémentaires est très restrictive en l'anneau $A$. C'est ce que l'on montrera à travers le résultat suivant:
\medskip
\begin{theo}\label{thm:coefficient}
Supposons qu'il existe un pré-module élémentaire de coefficient $A$. Alors,
\begin{enumerate}[label=$(\roman*)$]
    \item\label{thm:coefficient-i} Si $A$ est de caractéristique $0$, soit $A=\bZ$, soit $A=\cO_K$ où $K$ est un corps quadratique imaginaire.
    \item\label{thm:coefficient-ii} Si $A$ est de caractéristique $p>0$, il existe une courbe projective lisse $(C,\cO_C)$ sur $\bF_p$ ainsi qu'un point fermé $\infty$ de $C$, tel que  $A=\cO_C(C\setminus\{\infty\})$.
\end{enumerate}
\end{theo}
\medskip
En particulier, l'existence d'un pré-module élémentaire force $K$ à être un corps global ayant au plus une place infinie (ce qui n'est restrictif que pour les corps de nombres). \\

On s'intéresse ensuite à la classification des modules élémentaires de type~$(1)$ et $(2)$. Notre résultat principal énonce que les modules élémentaires cités ci-dessus sont à peu de chose près les seuls:
\medskip
\begin{theo}\label{thm:classification}
Soit $\bG$ un module élémentaire de type $(1)$ ou $(2)$, de coefficient $A$ et de rang $r$. Alors:
\begin{enumerate}[label=$(\operatorname{\Roman*})$]
    \item\label{thm:classification-i} Si la caractéristique de $A$ est nulle,
    \begin{enumerate}[label=$-$]
        \item Soit $r=1$ et $A=\bZ$, auquel cas $\bG$ est une forme de $\bG_m$.
        \item Soit $r=1$ et $\bZ\subsetneq A$, auquel cas $A=\cO_K$ où $K$ est un corps quadratique imaginaire et $\bG$ est une courbe elliptique avec multiplication complexe par $\cO_K$.
        \item Soit $r=2$ auquel cas $A=\bZ$ et $\bG$ est une courbe elliptique.
    \end{enumerate}
    \item\label{thm:classification-ii} Si la caractéristique de $A$ est $p>0$, alors $A$ est comme dans le~théorème~\ref{thm:classification}.\ref{thm:coefficient-ii},~et
    \begin{enumerate}[label=$-$]
        \item Soit $\bG$ est de type $(1)$ auquel cas $\bG$ est un module de Drinfeld générique de coefficient $A$ et de rang $r$. 
        \item Soit de type $(2)$ auquel cas $\bG$ est un module de Drinfeld de caractéristique $\delta$, où $\delta:A\to L$ est l'élévation à une puissance $p$ème. 
    \end{enumerate}
\end{enumerate}
\end{theo}

\medskip
\begin{rema}\label{rem:presque-vrai}
La réciproque du théorème ci-dessus est \emph{presque vraie}, à ceci près qu'une courbe elliptique à multiplication complexe par $\cO_K$ n'est un module élémentaire de type $(1)$ que si son \emph{seul type} au sens de Shimura \cite{shimura} (ou \emph{morphisme caractéristique} sous notre terminologie) est l'inclusion $\cO_K\subset L$ (c.f. proposition \ref{prop:courbe-elliptique-cm}). 
\end{rema}
\medskip

\medskip
\begin{rema}
Si $\bG$ est un module élémentaire sur un corps $L$ de caractéristique $p>0$, et $\Frob_p:L\to L$ désigne l'élévation à la puissance $p$, alors $\bG':=\Frob_p^*\bG$ est encore un module élémentaire. Cela vient du fait que le groupe de Galois ne voit pas les racines $p$èmes, d'où $\operatorname{T}_\ell \bG\cong \operatorname{T}_\ell \bG'$ en tant que représentations de $G_L$. Cela explique pourquoi, en \ref{thm:classification-ii}, on peut avoir des modules de Drinfeld non génériques (i.e. dont le morphisme caractéristique diffère de l'inclusion).
\end{rema}
\medskip

\paragraph{Travail futur:} Tout au long de ce texte nous avons supposé $\bG$ de dimension $1$. Il serait grandement souhaitable de supprimer cette hypothèse et d'inclure dans cette étude le cas des modules d'Anderson d'un côté et des variétés semi-abéliennes de l'autre. Cependant, on trouve en dimension supérieure des schémas en $A$-modules dont les modules de Tate sont nuls (e.g. des groupes unipotents lorsque $K=\bQ$) et le projet devient sensiblement plus compliqué.

\subsection{Plan de l'article}
Nous établissons les définitions de modules élémentaires et présentons les principaux exemples en section \ref{sec:expo}. En section \ref{sec:generalité} on rappellera des résultats clés sur les groupes algébriques nécessaires à notre étude. Les preuves des théorèmes \ref{thm:coefficient} et \ref{thm:classification} sont très différentes en fonction de la caractéristique de $A$ (nulle ou positive). C'est pourquoi, en section \ref{sec:demo}, nous avons décidé de les traiter sous deux sous-sections distinctes.

\subsection{Remerciements}
Les deux auteurs souhaitent remercier chaleureusement le Mathematisches Forschungsinstitut Oberwolfach où les idées exposées dans ce papier sont nées.

\section{Modules élémentaires}\label{sec:expo}
Dans cette section, nous définissons les modules élémentaires de type $(1)$ et $(2)$ et présentons les exemples de tels objets. 

\subsection{Terminologie}
Soit $A$ un anneau de Dedekind et soit $L$ un corps qui est une $A$-algèbre. Fixons $L^s$ une clôture séparable de $L$ de groupe de Galois absolu noté $G_L$. Soit $\bG$ un schéma en $A$-modules sur $L$.\\

On rappelle que $\Lie_\bG$ est le foncteur de la catégorie des $L$-algèbres vers celle des $A$-modules qui, à une $L$-algèbre $R$, assigne
\[
\Lie_\bG(R):=\ker \bG\!\left(R[\varepsilon]/\varepsilon^2\xrightarrow{\varepsilon\mapsto 0} R\right).
\]
Le module $\Lie_\bG(R)$ est également un $L$-espace vectoriel où la multiplication par le scalaire $l\in L$ est déduite de l'endomorphisme $a+\varepsilon b\mapsto a+\varepsilon l b$ de l'anneau $R[\varepsilon]/\varepsilon^2$. En particulier, $\Lie_\bG(R)$ est naturellement un $A\otimes_{\bZ} L$-module. Si $\bG$ est lisse de dimension $d$, alors $\Lie_\bG(L)$ est de dimension $d$ sur $L$ \cite[cor. 1.23]{milne}. \\

Supposons $\bG$ lisse de dimension $d=1$. 
\medskip
\begin{defi}\label{def:morphisme-cara}
On appelle \emph{morphisme caractéristique de $\bG$} l'unique morphisme d'anneaux $\delta_\bG:A\to L$ pour lequel l'action de $a\in A$ sur $\Lie_\bG(L)$ coïncide avec la multiplication par le scalaire $\delta_{\bG}(a)\in L$. On appelle \emph{idéal caractéristique}, que l'on note $\fc_\bG$, le noyau de $\delta_\bG$.
\end{defi}
\medskip

Pour $\ell\subset A$ un idéal, le \emph{module de Tate $\ell$-adique} est défini comme la limite inverse des éléments de $\ell$-torsion:
\[
\operatorname{T}_\ell\bG:=\varprojlim_n \bG[\ell^n](L^s).
\]
En notant $A_\ell$ le complété de $A$ pour la topologie $\ell$-adique, $\operatorname{T}_\ell\bG$ est alors un $A_\ell$-module muni d'une action compatible du groupe $G_L$.
\medskip
\begin{defi}\label{def:rang}
Soit $r\geq 1$ un entier. Nous dirons de $\bG$ \emph{qu'il est de rang $r$} si, pour tout idéal maximal $\ell\subset A$ différent de l'idéal caractéristique $\fc$, le module $\operatorname{T}_\ell\bG$ est libre de rang $r$ sur $A_\ell$.
\end{defi}
\medskip
\begin{exem}\label{ex:classique}
Pour $A=\bZ$, le groupe multiplicatif $\bG_m$ sur $L$ a pour morphisme caractéristique la flèche $\bZ\to L$. L'idéal caractéristique de $\bG_m$ correspond alors à la caractéristique de $L$ ce qui motive la terminologie. Pour $\ell$ un premier différent de la caractéristique, les polynômes $X^{\ell^k}-1$, $k>0$, sont séparables et donc
\[
\operatorname{T}_\ell\bG_m\cong \varprojlim_{x\mapsto x^\ell} (L^s)^\times
\]
est libre de rang $1$ sur $\bZ_\ell$. Cela montre que $\bG_m$ est de rang $1$. Plus généralement, toute courbe elliptique sur $L$ est de rang $2$; nous renvoyons le lecteur à  \cite[III.7.1]{silverman1} pour ce résultat bien connu. De cette même référence, on déduit que pour $A=\cO_K$ où $K$ désigne une extension quadratique imaginaire de $\bQ$ et $\cO_K$ désigne son anneau d'entiers, toute courbe elliptique sur $L$ ayant multiplication complexe par $A$ est de rang $1$ vu comme schéma en $A$-modules sur $L$. 
\end{exem}
\medskip

Par la structure de $A$-module sur $\bG$, il y a un morphisme canonique
\[
\varphi:A \longrightarrow \End_{\operatorname{grp}/L}(\bG)
\]
de l'anneau $A$ vers celui des endomorphismes de $\bG$ vu comme schéma en groupes sur $L$. Voici un lemme facile mais utile à divers endroit de la classification.
\medskip
\begin{lemm}\label{lem:phi-injective}
Supposons $\bG$ de rang $r\geq 1$. Alors le noyau de $\varphi$ est inclus dans $\fc$. 
\end{lemm}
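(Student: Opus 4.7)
The plan is to reduce the statement to the one-dimensional linear algebra of $\Lie_\bG(L)$. The key observation is that the Lie functor converts the question about the endomorphism $\varphi(a) \in \End_{\operatorname{grp}/L}(\bG)$ into a question about the scalar $\delta_\bG(a) \in L$, because by the very definition of the characteristic morphism (Definition \ref{def:morphisme-cara}), the action of $a \in A$ on the $L$-line $\Lie_\bG(L)$ coincides with multiplication by $\delta_\bG(a)$.

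Concretely, I would proceed as follows. Let $a \in \ker \varphi$, so that $\varphi(a) = 0$ as an endomorphism of $\bG$. Applying the functor $\Lie$, the induced endomorphism of $\Lie_\bG(L)$ is zero; but by definition this induced endomorphism is multiplication by $\delta_\bG(a)$. Since $\bG$ is smooth of dimension $1$, $\Lie_\bG(L)$ is a one-dimensional $L$-vector space (cited already from \cite[cor.~1.23]{milne}) and in particular nonzero, forcing $\delta_\bG(a) = 0$, i.e.\ $a \in \ker \delta_\bG = \fc$.

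There is no real obstacle: the whole argument is a single application of the Lie functor. The rank hypothesis $r \geq 1$ is hardly used here --- it plays no role beyond guaranteeing that $\bG$ is a genuine pre-$A$-module élémentaire --- and the decisive input is the smoothness-plus-dimension-one assumption, which ensures $\Lie_\bG(L) \neq 0$. (An alternative route through the Tate module would use the rank hypothesis more essentially: if $a \in \ker\varphi$ then $a$ acts by $0$ on $\operatorname{T}_\ell\bG$, which for $\ell \neq \fc$ is free of positive rank over $A_\ell$, forcing $a = 0$ in $A_\ell$ and hence in $A$; this in fact yields the stronger statement $\ker \varphi \subset \bigcap_{\ell \neq \fc}\ker(A \to A_\ell) = 0$. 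But the Lie-theoretic proof above suffices and is cleaner.)
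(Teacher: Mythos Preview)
Your Lie-functor argument is correct and is genuinely different from the paper's proof. The paper instead argues via torsion: for $a\in A\setminus\fc$ nonzero, it factors $(a)=\ell_1^{c_1}\cdots\ell_t^{c_t}$ with the $\ell_i$ distinct from $\fc$, uses the rank hypothesis to identify $\bG[a](L^s)\cong (A/a)^r$, and then observes that $\bG[a](L^s)\subsetneq\bG[a^2](L^s)$, so $\varphi(a)$ cannot vanish. Your route is shorter and, as you observe, does not use the rank hypothesis at all --- only smoothness in dimension $1$ so that $\Lie_\bG(L)\neq 0$. The paper's approach, by contrast, makes the rank assumption do the work; it is closer in spirit to the parenthetical Tate-module variant you sketch (and indeed that variant, as you note, yields the slightly stronger conclusion $\ker\varphi=0$ whenever some $\ell\neq\fc$ exists, which is automatic here). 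Either argument is perfectly adequate for the purposes of the paper; yours has the virtue of isolating exactly which hypothesis is needed.
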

\medskip
\begin{proof}
Soit $a\in A\setminus \fc$. Comme $A$ est Dedekind, l'idéal $(a)$ de $A$ se décompose en produit d'idéaux premiers $\ell_1^{c_1}\cdots \ell_t^{c_t}$, les $\ell_i$ étant deux à deux premiers entre eux et non contenus dans $\fc$, et l'on a $\bG[a](L^s)\cong \bG[\ell_1^{c_1}](L^s)\times \cdots \times \bG[\ell_t^{c_t}](L^s)$ par les restes chinois. Par définition, le module $\bG[a](L^s)$ est isomorphe à $(A/a)^r$. Ainsi, puisque $(a)\neq (a^2)$, l'inclusion $\bG[a](L^s)\subset \bG[a^2](L^s)$ est stricte. Il en est alors de même pour l'inclusion $\bG[a](L^s)\subseteq \bG(L^s)$, et donc $\varphi(a)$ est non nul. 
\end{proof}

\subsection{Modules élémentaires de type $(1)$}
Supposons à présent que $L$ est une extension finie de $K$, le corps des fractions de $A$. Soit $\bG$ un schéma en $A$-modules sur $L$ que l'on suppose connexe lisse de dimension $1$ et de rang $r$.
\medskip
\begin{defi}\label{def:type1}
Nous dirons de $\bG$ que c'est un \emph{$A$-module élémentaire sur $L$ de type $(1)$} si le morphisme caractéristique de $\bG$ coincide à l'inclusion $A\subset L$. En particulier, l'idéal caractéristique de $\bG$ est nul. 
\end{defi}
\medskip
\begin{rema}\label{rem:Z}
Si $A=\bZ$ alors, comme $\bZ$ est initial, tout schéma en $A$-modules sur $L$ de rang $r\geq 1$ est un module élémentaire de type $(1)$.
\end{rema}
\medskip

En vue de cette remarque et des exemples \ref{ex:classique}, on obtient:
\medskip
\begin{prop}\label{prop:reciproque1}
Les formes du groupe multiplicatif et les courbes elliptiques sur $L$ sont des $\bZ$-modules élémentaires de type $(1)$.
\end{prop}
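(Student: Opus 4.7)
Le plan est d'invoquer la remarque \ref{rem:Z}, qui ramène la proposition à vérifier que chacun de ces objets est bien un pré-$\bZ$-module élémentaire sur $L$, c'est-à-dire un schéma en $\bZ$-modules algébrique, connexe, lisse, de dimension $1$ et de rang $r\geq 1$.

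Pour les formes du groupe multiplicatif, je partirais du fait que $\bG_m$ est un schéma en groupes commutatif, algébrique, connexe et lisse sur $L$, de dimension $1$; comme tout schéma en groupes commutatif est de manière canonique un schéma en $\bZ$-modules (via la flèche initiale $\bZ\to \End_{\operatorname{grp}/L}(\bG_m)$), c'est en particulier un schéma en $\bZ$-modules. Ces propriétés (algébricité, connexité, lissité, dimension) sont préservées par passage à une forme, puisqu'elles se testent après changement de base fidèlement plat. Le rang $1$ est précisément l'énoncé de l'exemple \ref{ex:classique}, et il se transmet aux formes car la formation du module de Tate $\ell$-adique ne dépend pas de la forme choisie à isomorphisme de $\bZ_\ell$-modules près (les points de torsion sont pris dans $L^s$).

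Pour les courbes elliptiques, le raisonnement est analogue: une courbe elliptique $\bE/L$ est un schéma en groupes commutatif, projectif, lisse, connexe, de dimension $1$; la structure de $\bZ$-module est canonique, et le rang $2$ est rappelé dans l'exemple \ref{ex:classique} (\cite[III.7.1]{silverman1}).

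Enfin, dans les deux cas, $A=\bZ$ étant initial dans la catégorie des anneaux, le morphisme caractéristique $\delta_\bG:\bZ\to L$ coïncide nécessairement avec la flèche structurale $\bZ\subset L$; la condition de type $(1)$ de la définition \ref{def:type1} est donc trivialement satisfaite, comme l'indique la remarque \ref{rem:Z}. Il n'y a à vrai dire pas vraiment d'obstacle à lever ici: l'intérêt de cette proposition est simplement de signaler que le formalisme introduit couvre bien les deux classes d'exemples motivant la définition.
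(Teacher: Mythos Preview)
Your proof is correct and follows exactly the approach of the paper: the proposition is stated immediately after the sentence ``En vue de cette remarque et des exemples \ref{ex:classique}, on obtient'', so the paper's argument is precisely to combine Remark~\ref{rem:Z} (the type~$(1)$ condition is automatic for $A=\bZ$) with Example~\ref{ex:classique} (the rank computations). Your write-up simply makes explicit the routine verifications (algébricité, connexité, lissité, dimension~$1$, and stability under forms) that the paper leaves implicit.
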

\medskip

\`A une variété abélienne $A$ de dimension $d$ définie sur un corps de nombres $k$ et à multiplication par un corps $K$, Shimura \cite{shimura} associe son \emph{type} qui est un ensemble de $d$ plongements $K\to k^{\operatorname{alg}}$. Si $K$ est une extension quadratique imaginaire de $\bQ$ d'anneaux d'entiers $\cO_K$ et $E$ une courbe elliptique à multiplication complexe par $K$, il suit des définitions que le type est de $E$ est le singleton formé du morphisme caractéristique de $E$. En particulier:
\medskip
\begin{prop}\label{prop:courbe-elliptique-cm}
La courbe elliptique $E$ est un $\cO_K$-module élémentaire de type $(1)$ si son seul type au sens de Shimura coincide à l'inclusion $\cO_K\subset L$.
\end{prop}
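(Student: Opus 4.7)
La preuve est essentiellement une comparaison de définitions. Mon plan est le suivant.

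D'abord, je vérifierais que $E$ satisfait les axiomes de pré-$\cO_K$-module élémentaire sur $L$: en tant que courbe elliptique, $E$ est un schéma algébrique connexe lisse de dimension $1$, et d'après l'exemple \ref{ex:classique}, son rang en tant que schéma en $\cO_K$-modules est bien $1$. Seule la description du morphisme caractéristique reste à établir pour obtenir le type~$(1)$.

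Ensuite, j'expliciterais le morphisme caractéristique $\delta_E:\cO_K\to L$ de la définition \ref{def:morphisme-cara}. L'espace tangent $\Lie_E(L)$ est un $L$-espace vectoriel de dimension $1$. L'action de $\cO_K$ sur $\Lie_E(L)$, induite par fonctorialité à partir de la structure de schéma en $\cO_K$-modules sur $E$ -- elle-même donnée par l'injection $\cO_K\hookrightarrow \End_L(E)$ fournie par la multiplication complexe --, se factorise à travers $\End_L(\Lie_E(L))\cong L$ et définit précisément $\delta_E$.

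Puis je rappellerais la construction du type de Shimura dans notre situation. Pour une variété abélienne $A$ sur $k$ de dimension $d$ à multiplication complexe par un corps $K$ de degré $2d$ sur $\bQ$, l'action de $K$ sur $\Lie(A)\otimes_k k^{\operatorname{alg}}$ se décompose en somme directe de $d$ sous-espaces propres, associés chacun à un plongement $K\to k^{\operatorname{alg}}$; ces $d$ plongements forment le \emph{type} de $A$. Dans le cas d'une courbe elliptique ($d=1$) à CM par $K$ quadratique imaginaire, ce type se réduit à un unique plongement, qui coïncide par construction avec $\delta_E$ (composé avec un plongement fixé $L\hookrightarrow k^{\operatorname{alg}}$). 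La conclusion est alors immédiate: si l'unique type de Shimura de $E$ est l'inclusion $\cO_K\subset L$, alors $\delta_E$ est cette inclusion, et $E$ est donc un module élémentaire de type $(1)$ au sens de la définition \ref{def:type1}.

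Le seul point nécessitant une vérification soigneuse est l'identification de l'action de $\cO_K$ sur $\Lie_E(L)$ issue de la structure de schéma en $\cO_K$-modules avec celle utilisée dans la définition du type de Shimura. Ceci est essentiellement tautologique, puisque les deux proviennent de la même action de $\cO_K$ sur $E$ fournie par la CM; il suffit d'appliquer $\Lie$ à cette action et d'identifier la catégorie des $L$-espaces vectoriels de dimension $1$ à celle des $L$-modules libres de rang $1$.
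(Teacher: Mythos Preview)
Your proposal is correct and follows essentially the same approach as the paper. The paper does not give a separate proof of this proposition: it simply observes in the paragraph preceding the statement that ``il suit des d\'efinitions que le type de $E$ est le singleton form\'e du morphisme caract\'eristique de $E$'', and the proposition is then stated as an immediate consequence (``En particulier:''). Your write-up is a more explicit unfolding of exactly this identification between the Shimura type and $\delta_E$.
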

\medskip

\subsection{Modules élémentaires de type $(2)$}
Soit $\bG$ un schéma en $A$-modules algébriques sur $L$, connexe lisse de dimension $1$ et de rang $r\geq 1$. Pour définir les modules élémentaires de type $(2)$, on fait usage de la propriété d'indépendance en $\ell$ suivante:
\medskip
\begin{defi}\label{def:type2}
Nous dirons de $\bG$ que c'est un \emph{$A$-module élémentaire de type~$(2)$ sur $L$} s'il existe un ensemble fini $S$ d'idéaux maximaux de $\cO_L$ pour lequel, pour tout $\fP$ un idéal maximal de $\cO_L$ en dehors de $S$ et $\ell$ un idéal maximal de $A$ différent de $\fp:=\fP\cap A$,
\begin{enumerate}[label=$(\alph*)$]
    \item La représentation $\operatorname{T}_\ell \bG$ est non ramifiée en $\fP$; i.e. le groupe d'inertie $I_\fP\subset G_L$ en $\fP$ agit trivialement, et, 
    \item Le déterminant $s(\fP)\in A_\ell$ de l'action de $\Frob_\fP\in G_L/I_\fP$ sur $\operatorname{T}_\ell \bG$ appartient à $A$ et est indépendant de $\ell$.
\end{enumerate}
\end{defi}
\medskip
\begin{exem}\label{ex:type(2)}
Il est encore une fois bien connu que, pour $A=\bZ$, le groupe multiplicatif et les courbes elliptiques vérifient ces hypothèses. Lorsque $A=\cO_K$ où $K$ est une extension quadratique imaginaire de $\bQ$, c'est également le cas des courbes elliptiques à multiplication complexe par $\cO_K$. Nous renvoyons le lecteur à \cite[Prop. V.2.3]{silverman1}. 
\end{exem}

\subsection{Modules de Drinfeld}\label{subsec:drinfeld}
Terminons cette section par des rappels de la théorie des modules de Drinfeld et montrons que ces derniers sont des modules élémentaires de type $(1)$ et $(2)$. \\ 

Soit $p$ un nombre premier et $\bF_p$ le corps fini à $p$ éléments. Soit $(C,\cO_C)$ une courbe projective lisse sur $\bF_p$ et $\infty$ un point fermé de $C$. Soit $A$ l'anneau des fonctions de $C$ régulières en dehors de $\{\infty\}$; i.e. $A=\cO_C(C\setminus\{\infty\})$. C'est un anneau de Dedekind. Pour $a\in A$ non nul, on notera $\deg(a)$ le \emph{le degré de $a$}; i.e. la dimension (finie) de $A/(a)$ sur $\bF_p$. \\

Soit $L$ un corps de caractéristique $p$. Désignons par $L\{\tau\}$ l'anneau non commutatif des sommes finies $p(\tau)=\sum_{i}{c_i\tau^i}$, $c_i\in L$ pour $i\geq 0$, la multiplication étant déduite de $\tau c=c^q \tau$. On notera $\deg_\tau p(\tau)$ l'entier donné par le maximum des $i\geq 0$ tels que $c_i\neq 0$. Si $\bG_a$ désigne le groupe additif sur $L$, on a alors un morphisme d'anneaux $L\{\tau\}\to \End_{\text{grp}/L}(\bG_a)$ où $c\in L$ opère par homothétie $x\mapsto cx$ sur $\bG_a$ et où $\tau$ opère via la mise à la puissance $p$ème $x\mapsto x^p$. On vérifie que c'est un isomorphisme d'anneaux (e.g.  \cite[II.\S 3,4.4]{demazure-gabriel}).\\

\'Etant donné $G$ un schéma en $A$-modules sur $L$, alors le groupe $M(G):=\Hom_{\text{grp}/L}(G,\bG_a)$ est muni d'une structure de $L\{\tau\}$-module à gauche par pré-composition. $M(G)$ est également muni d'une structure de $A$-module, $A$  agissant sur $G$, qui commute à l'action de $L\{\tau\}$. Ces actions coïncident sur $\bF_p$ et donc $M(G)$ est canoniquement un $A\otimes_{\bF_p}L\{\tau\}$-module à gauche. Rappelons la définition d'un module de Drinfeld:
\medskip
\begin{defi}[$A$-module de Drinfeld]
Un schéma en $A$-modules $E$ sur $L$ est appelé un \emph{$A$-module de Drinfeld sur $L$ de rang $r$} s'il est isomorphe à $\bG_a$ en tant que schéma en groupes sur $L$ et que le $A\otimes_{\bF_p} L$-module $M(E)$ est localement libre de rang $r$.
\end{defi}
\medskip
Comme tout module de Drinfeld sur $L$ et un schéma en $A$-modules de dimension $1$, on peut lui associer un morphisme caractéristique $\delta_E$ (définition \ref{def:morphisme-cara}). On pourrait démontrer qu'il n'y pas de conflit avec notre définition de rang (définition \ref{def:rang}) en utilisant \cite[prop. 2.2]{drinfeld}. On le montrera plutôt en utilisant le $A$-motif associé à un module de Drinfeld.

\subsubsection*{$A$-Motif associé}
Soit $\sigma$ l'endomorphisme de $A$-algèbres de $A\otimes_{\bF_p} L$ qui agit comme l'élévation à la puissance $p$ sur $L$. Soit $\delta:A\to L$ un morphisme d'anneaux. La définition suivante est due à Anderson \cite{anderson}:
\medskip
\begin{defi}
Un {\it $A$-motif (effectif, abélien) de rang $r$ et de caractéristique $\delta$} est la donnée d'un module $M$ localement libre de rang $r$ sur $A\otimes_{\bF_p}L$ et d'un morphisme $\sigma$-linéaire $\tau_M:M\to M$ dont le conoyau est annihilé par une puissance de l'idéal 
\[
\fj_{\delta}:=\ker(A\otimes_{\bF_p}L\to L,~a\otimes b\mapsto \delta(a)b) \subset A\otimes_{\bF_p} L.
\]
\end{defi}
\medskip

\'Etant donné un module de Drinfeld $E$, on obtient un $A$-motif de même morphisme caractéristique et rang, le module sous-jacent est $M(E)$ et le morphisme $\tau_M$ étant obtenu par la post-composition par $\tau\in \End_{\text{grp}/L}(\bG_a)$ sur $M(E)$ (nous renvoyons à \cite[Thm. 3.5]{hartl} pour les détails de cette construction). 
\medskip
\begin{defi}
La donnée $\underline{M}(E)$ de $(M(E),\tau_M)$ est appelé \emph{le $A$-motif associé à $E$}. 
\end{defi}
\medskip
D'après le théorème \ref{thm:equiv-cat} plus bas, le $A$-motif $\underline{M}(E)$ détermine le module de Drinfeld $E$ (bien que tous $A$-motifs ne soit pas nécessairement de la forme $\underline{M}(E)$). On retrouve $E$ par la formule:
\begin{equation}\label{eq:formule}
E:\mathbf{Alg}_L \longrightarrow \mathbf{Mod}_A, \quad R\longmapsto \Hom_{L\{\tau\}}(M(E),R).
\end{equation}
Soit $L^s$ une cl\^oture séparable de $L$ de groupe de Galois absolu $G_L$. Notons $\operatorname{T}_{\ell}E$ le module de Tate $\ell$-adique de $E$ relatif à $L^s$ et l'idéal maximal $\ell\subset A$. En conséquence de cette description, on a un isomorphisme de représentation $A_{\ell}$-linéaire de $G_k$:
\begin{equation}\label{eq:equiv-iso}
\operatorname{T}_{\ell}E \cong \Hom_{L\{\tau\}}\left(M(E)^{\wedge}_{\ell_L},L^s\right).
\end{equation}
où $M(E)^{\wedge}_{\ell_L}$ désigne la complétion de $M(E)$ pour la topologie $\ell_L$-adique, où $\ell_L$ désigne l'idéal $\ell\otimes_{\bF_p}L\subset A\otimes_{\bF_p}L$. En particulier, on en déduit:
\medskip
\begin{prop}\label{prop:rang}
Si $\ell$ est différent de $\fc=\ker \delta_E$, alors $\operatorname{T}_{\ell}E$ est libre de rang $r$ sur $A_\ell$. 
\end{prop}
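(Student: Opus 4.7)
The plan is to reduce the statement to a property of the associated $A$-motive through the isomorphism~\eqref{eq:equiv-iso}: the hypothesis $\ell \neq \fc$ will make the ideals $\ell_L$ and $\fj_\delta$ coprime in $A \otimes_{\bF_p} L$, which forces $\tau_M$ to become a semi-linear bijection on the $\ell_L$-adic completion of $M(E)$, and the $A_\ell$-rank of the Tate module can then be read off by a standard $\sigma$-linear descent over $L^s$.

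First I would check the coprimality. Because $\ell$ and $\fc$ are distinct maximal ideals of $A$, one can pick $a \in \ell \setminus \fc$. Then $\delta(a) \in L^\times$, and the identity
\[
1 \otimes \delta(a) = (a \otimes 1) - \bigl(a \otimes 1 - 1 \otimes \delta(a)\bigr)
\]
exhibits a unit of $A \otimes_{\bF_p} L$ as a sum of an element of $\ell_L$ and an element of $\fj_\delta$, so $\ell_L + \fj_\delta = A \otimes_{\bF_p} L$. Passing to the $\ell_L$-adic completion $\cA := (A \otimes_{\bF_p} L)^{\wedge}_{\ell_L}$, the image of $\fj_\delta$ becomes the unit ideal of $\cA$.

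Setting $M := M(E)^{\wedge}_{\ell_L}$, a free $\cA$-module of rank $r$ (since $M(E)$ is free of rank $r$ over $A \otimes L$ for a Drinfeld module), the map $\tau_M$ becomes surjective on $M$, because $\coker \tau_M$ is annihilated by a power of $\fj_\delta$ by the very definition of an $A$-motive. Injectivity of $\tau_M$ is inherited from $M(E)$, on which $\tau_M$ is left multiplication by $\tau$ on the free left $L\{\tau\}$-module; this is injective and survives the flat $\ell_L$-adic completion. Thus $\tau_M$ is a $\sigma$-semi-linear bijection on the free rank-$r$ $\cA$-module $M$.

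To conclude, I would extend scalars to $L^s$ and invoke a standard Lang-type descent: over the separably closed field $L^s$, any $\sigma$-semi-linear bijection on a finite free module trivializes, and the module of $\tau_M$-fixed vectors in $M \otimes_L L^s$ is free of rank $r$ over the $\sigma$-invariant subring of $\cA \otimes_L L^s$, which is $A_\ell$. An adjunction identifies this fixed module with $\Hom_{L\{\tau\}}(M, L^s)$, and \eqref{eq:equiv-iso} closes the loop.

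The main obstacle lies in this last step: the coprimality input and the vanishing of $\coker \tau_M$ are essentially formal, but the $\sigma$-linear descent over the completed tensor product has to be set up carefully so that it yields an $A_\ell$-module of exactly rank $r$, compatibly with the natural $G_L$-action. This step is standard in the Anderson--Drinfeld framework, yet it is where the actual content of the proposition resides.
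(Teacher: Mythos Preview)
Your proposal is correct and follows essentially the same strategy as the paper: pass to the $A$-motive via \eqref{eq:equiv-iso}, use $\ell\neq\fc$ to make $\tau_M$ bijective on the $\ell$-part, and invoke a Lang-type descent over $L^s$. The paper executes this at finite level (on the finite-dimensional $L$-vector space $M(E)/\ell_L^n$, where Lang's isogeny theorem applies directly) and only then passes to the limit, which cleanly avoids the completed-module subtlety you yourself flag; note also that the link between $\operatorname{T}_\ell E$ and the $\tau$-fixed vectors is a duality ($\bF_p$-dual) rather than an adjunction, though this does not affect the rank count.
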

\medskip
\begin{proof}
C'est un résultat bien connu dont on se borne à esquisser la preuve. Soit $n\geq 1$. Si $\ell\neq \fc$, alors l'application $p$-linéaire des $L$-espaces de dimension finie $M(E)/\ell_L^n\to M(E)/\ell_L^n$ déduite de $\tau$ est semi-simple (cf. \cite{katz}). En particulier, on déduit du théorème d'isogénie de Lang (e.g. prop. 1.1 de \emph{loc.\,cit.}) que la flèche de multiplication
\[
(M(E)/\ell_L^n\otimes_L L^s)^\tau\otimes_{\bF_p} L^s \longrightarrow M(E)/\ell_L^n\otimes_L L^s
\]
est un isomorphisme. En particulier $(M(E)/\ell_L^n\otimes_L L^s)^\tau$ est un $A/\ell^n$-module libre de rang $r$. De plus, on déduit de \eqref{eq:formule} des isomorphismes de $A/\ell^n$-modules:
\begin{align*}
E[\ell^n](L^s)&\cong \Hom_{L\{\tau\}}(M(E)/\ell_L^n,L^s)\cong \Hom_{L^s\{\tau\}}(M(E)/\ell_L^n\otimes _L L^s,L^s) \\
&\cong \Hom_{\bF_p}((M(E)/\ell_L^n\otimes _L L^s)^\tau,\bF_p).
\end{align*}
Ainsi, $E[\ell^n](L^s)$ est libre de rang $r$ sur $A/\ell^n$. Puisque l'on peut, à chaque étape, choisir des base compatibles, on obtient que $\operatorname{T}_\ell E$ est libre de rang $r$ en passant à la limite.
\end{proof}

\subsubsection*{Modules de Drinfeld et module élémentaires}
On suppose que $L$ est une extension finie de $K$, le corps de fractions de $A$. Soit $E$ un $A$-module de Drinfeld sur $L$ de rang $r$ et soit $\delta=\delta_E:A\to L$ son morphisme caractéristique. 
\medskip
\begin{defi}\label{def:generique}
Un $A$-module de Drinfeld $E$ sur $L$ est appelé \emph{générique} (ou \emph{de caractéristique générique}) si $\delta_E$ coincide avec l'inclusion $A\subset L$.
\end{defi}
\medskip
Alors:
\medskip
\begin{prop}\label{prop:drinfeld-elementaire}
Soit $E$ un $A$-module de Drinfeld sur $L$ générique. En tant que schéma en $A$-modules, $E$ est un module élémentaire de rang $r$ de type $(1)$ et $(2)$.
\end{prop}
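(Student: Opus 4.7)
The plan is to verify separately the structural conditions of Definition~\ref{def:pre-module-elementaire} together with the type~$(1)$ property, which are essentially tautological, and then the independence-of-$\ell$ condition (b) of type~$(2)$, which is the main obstacle. For the former, $E$ is isomorphic to $\bG_a$ as a group scheme, hence algebraic, connected, smooth of dimension $1$. Since $E$ is generic, its characteristic ideal $\fc=\ker \delta_E$ is zero, so every maximal ideal $\ell\subset A$ differs from $\fc$, and Proposition~\ref{prop:rang} gives that $\operatorname{T}_\ell E$ is free of rank $r$ over $A_\ell$, proving rank $r$. The type~$(1)$ condition is precisely the definition of genericity (Definition~\ref{def:generique}).

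For type~$(2)$, I would argue via good reduction. Choose generators $a_1,\dots,a_n$ of $A$ over $\bF_p$, view $\varphi(a_i)$ as an element of $L\{\tau\}$, and let $S$ be the finite set of maximal ideals $\fP\subset \cO_L$ at which some leading coefficient of $\varphi(a_i)$ is not a $\cO_{L,\fP}$-unit; equivalently, $S$ is the locus where $\underline{M}(E)$ fails to extend to a locally free $A\otimes_{\bF_p}\cO_{L,\fP}$-motive of the same rank. For $\fP\notin S$, coefficient reduction produces a Drinfeld $A$-module $\bar{E}$ of rank $r$ over the residue field $\kappa(\fP)=\cO_L/\fP$. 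If $\ell\neq \fp=\fP\cap A$, then multiplication by $\ell^n$ on $E$ is étale over $\Spec \cO_{L,\fP}$, so $E[\ell^n]$ is a finite étale group scheme whose generic and special fibres yield a $G_L$-equivariant isomorphism $\operatorname{T}_\ell E\cong \operatorname{T}_\ell \bar{E}$ through which the inertia $I_\fP$ acts trivially; moreover $\Frob_\fP$ acts on $\operatorname{T}_\ell \bar{E}$ as the geometric Frobenius isogeny $\pi_\fP \in \End_{\kappa(\fP)}(\bar{E})$. This settles condition~(a).

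The main obstacle is condition~(b), which I would reduce to the existence of a monic polynomial $P_\fP(X)\in A[X]$ of degree $r$ such that, for every $\ell\neq \fp$, $P_\fP$ is the characteristic polynomial of $\pi_\fP$ acting on $\operatorname{T}_\ell \bar{E}$. The determinant $s(\fP)$ will then equal $(-1)^r$ times the constant term of $P_\fP$, so it lies in $A$ and is independent of $\ell$. Via the comparison \eqref{eq:equiv-iso}, the action of $\pi_\fP$ on the Tate module is dual to that of $\tau^{[\kappa(\fP):\bF_p]}$ on the integral $A$-motive $\underline{M}(\bar{E})$; since this endomorphism is $A\otimes_{\bF_p}\kappa(\fP)$-linear on a finitely generated projective module, its characteristic polynomial is globally defined in $A[X]$ and is compatible with all $\ell$-adic localizations. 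This is the classical theorem on the characteristic polynomial of Frobenius for a Drinfeld module over a finite field (Drinfeld, Gekeler); once invoked, it completes the verification of type~$(2)$.
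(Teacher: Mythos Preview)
Your argument is correct and follows the same line as the paper, which simply dispatches the structural and type~$(1)$ parts exactly as you do and then cites \cite[cor.~3.2.4]{gossL} for type~$(2)$; your good-reduction sketch is precisely the content behind that citation. One small imprecision: the characteristic polynomial of $\tau^{[\kappa(\fP):\bF_p]}$ acting on the $A\otimes_{\bF_p}\kappa(\fP)$-module $M(\bar E)$ a priori lives in $(A\otimes_{\bF_p}\kappa(\fP))[X]$, not in $A[X]$; the descent to $A[X]$ is exactly the nontrivial input of the Drinfeld--Gekeler theorem you invoke, so that sentence should be phrased as a consequence of the cited result rather than as a direct observation.
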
 
\medskip
\begin{proof}
Que $E$ soit connexe et lisse découle de son isomorphisme avec $\bG_a$; qu'il soit de rang $r$ selon la définition \ref{def:rang} découle de la proposition \ref{prop:rang}. Puisqu'il est générique, il est élémentaire de type $(1)$. Qu'il soit de type $(2)$ se déduit de \cite[cor. 3.2.4]{gossL}.
\end{proof}

Soit $G$ un schéma en $A$-modules sur $L$. Soit $\Frob_p:L\to L$, $x\mapsto x^p$ le $p$-Frobenius et désignons par $\Frob_p^*G$ le schéma en $A$-modules sur $L$ donné par $R\mapsto G(R^{(1)})$ où $R^{(1)}$ est la $L$-algèbre égale à $R$ en tant qu'anneaux et où la multiplication par $L$ agit à travers $\Frob_p$. Si $G$ est algébrique (resp. connexe ou lisse) il en est de même pour $\Frob_p^*G$. On en déduit:
\medskip
\begin{coro}
Soit $E$ un $A$-module de Drinfeld sur $L$ dont le morphisme caractéristique coïncide avec l'élévation à une puissance $p$ième. Alors $E$ un module élémentaire de type $(2)$. 
\end{coro}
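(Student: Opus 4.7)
Le plan est de ramener l'énoncé au cas générique de la proposition \ref{prop:drinfeld-elementaire}, via extraction de racines $p^n$-ièmes sur une extension purement inséparable. Soit $n\geq 1$ un entier tel que $\delta_E(a) = a^{p^n}$ pour tout $a\in A$, et écrivons $\phi_a = \sum_{i} c_i(a)\tau^i \in L\{\tau\}$ la représentation polynomiale de l'action de $a\in A$ sur $E$ (de sorte que $c_0(a) = a^{p^n}$).

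Sur la clôture parfaite $L^{\mathrm{perf}}$ de $L$, on construit un module de Drinfeld $E_0$ par la formule $\phi^0_a := \sum_{i} c_i(a)^{1/p^n}\tau^i$; son terme constant étant $a$, $E_0$ est alors \emph{générique}. Le cœur technique de la preuve consiste à vérifier que $a\mapsto \phi^0_a$ définit bien un morphisme d'anneaux $A\to L^{\mathrm{perf}}\{\tau\}$, ce qui repose essentiellement sur le fait que la prise de racine $p^n$-ième est un endomorphisme d'anneau de $L^{\mathrm{perf}}$ (car celui-ci est parfait). Un calcul direct montre ensuite que $\Frob_p^{n*} E_0 \cong E\times_L L^{\mathrm{perf}}$, puisque l'image réciproque par $\Frob_p^n$ élève les coefficients à la puissance $p^n$, renvoyant $c_i(a)^{1/p^n}$ sur $c_i(a)$.

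Il reste à transférer la condition de type $(2)$ de $L^{\mathrm{perf}}$ à $L$. L'extension $L^{\mathrm{perf}}/L$ étant purement inséparable, les groupes de Galois absolus coïncident ($G_L = G_{L^{\mathrm{perf}}}$) et les idéaux maximaux de $\cO_L$ et $\cO_{L^{\mathrm{perf}}}$ sont en bijection préservant sous-groupes d'inertie et éléments de Frobenius. De plus, le Frobenius étant une bijection sur $L^s$, l'image réciproque par $\Frob_p^n$ ne modifie pas le module de Tate comme représentation galoisienne, d'où
\[
\operatorname{T}_\ell E \cong \operatorname{T}_\ell (E\times_L L^{\mathrm{perf}}) \cong \operatorname{T}_\ell (\Frob_p^{n*}E_0) \cong \operatorname{T}_\ell E_0
\]
comme $G_L$-modules. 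La proposition \ref{prop:drinfeld-elementaire} appliquée à $E_0$ fournit alors les conditions $(a)$ et $(b)$ de la définition \ref{def:type2} pour $\operatorname{T}_\ell E_0$, qui se transfèrent directement à $\operatorname{T}_\ell E$ via cet isomorphisme, en prenant $S := \{\fP'\cap\cO_L : \fP'\in S'\}$ où $S'\subset \Spec \cO_{L^{\mathrm{perf}}}$ est l'ensemble exceptionnel fourni par la proposition.
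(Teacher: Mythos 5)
Your proof follows essentially the same route as the paper's: untwist the characteristic by a power of Frobenius to obtain a generic Drinfeld module $E_0$ over a purely inseparable extension, apply the proposition \ref{prop:drinfeld-elementaire}, and transfer the type $(2)$ conditions back to $L$ using that a purely inseparable extension changes neither the absolute Galois group nor the relevant primes, inertia and Frobenius elements, with $\operatorname{T}_\ell E\cong\operatorname{T}_\ell E_0$ as $G_L$-représentations. One minor adjustment: since la proposition \ref{prop:drinfeld-elementaire} est énoncée pour $L$ finie sur $K$, il vaut mieux travailler sur $L^{1/p^n}$ (où vivent déjà tous les coefficients $c_i(a)^{1/p^n}$, et qui est finie sur $K$) plutôt que sur la clôture parfaite $L^{\operatorname{perf}}$ tout entière — c'est exactement ce que fait le papier, et le reste de votre argument s'applique mot pour mot.
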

\medskip
\begin{proof}
Soit $k\geq 0$ tel que $\delta=\Frob_p^k$. On a $E\times_L L^{1/p^k}\cong (\Frob_p^k)^* E_0$ où $L^{1/p^k}$ est le corps obtenu en adjoignant les racines $p^k$èmes des éléments de $L$ et où $E_0$ est un module de Drinfeld sur $L^{1/p^k}$ de morphisme caractéristique l'inclusion $A\to L^{1/p^k}$. En tant que schéma en $A$-modules, $E_0$ est un module élémentaire de type $(2)$ d'après la proposition \ref{prop:drinfeld-elementaire} et donc $E\times_L L^{1/p^k}$ est aussi élémentaire de type $(2)$. Comme $\Frob_p$ commute à l'action de Galois, il suit des définitions que $E$ est élémentaire de type $(2)$.
\end{proof}

On termine cette section par d'avantage de généralités sur le déterminant des modules de Drinfeld qui nous seront utiles en section \ref{sec:demo} pour se ramener au cas de rang $1$.

\subsubsection*{Déterminant d'un module de Drinfeld}
Soit $E$ un module de Drinfeld de rang $r$ et de caractéristique $\delta_E$. Soit $\underline{M}(E)=(M(E),\tau_M)$ son $A$-motif. Notons $D$ la puissance extérieure maximale $\bigwedge^{\operatorname{max}} M(E)=\bigwedge^{r} M(E)$ prise en tant que $A\otimes_{\bF_p} L$-module. $D$ est alors localement libre de rang $1$. Le morphisme $\tau_M$, en agissant diagonalement sur $D$, induit un morphisme $\sigma$-linéaire
\[
\tau_D:D \longrightarrow D, \quad m_1\wedge \cdots \wedge m_r\longmapsto \tau(m_1)\wedge \cdots \wedge \tau(m_r).
\]
Il est facile de vérifier que la donnée de $\underline{D}=(D,\tau_D)$ est encore un $A$-motif de caractéristique $\delta_E$, cette-fois-ci de rang $1$. On doit le résultat suivant à Drinfeld (voir \cite[\S 0]{anderson}):
\medskip
\begin{theo}\label{thm:det}
Le $A$-motif $\underline{D}$ provient d'un $A$-module de Drinfeld de rang $1$ et de caractéristique $\delta_E$. Ce module de Drinfeld est unique à isomorphisme près, et on le note $\det E$.
\end{theo}
\medskip
D'après \eqref{eq:equiv-iso}, on a un isomorphisme de représentations $A_\ell$-linéaires de $G_L$:
\begin{equation}\label{eq:Tate-det}
\operatorname{T}_{\ell}(\det E) \cong \bigwedge^{r}_{A_\ell}\operatorname{T}_{\ell}E.
\end{equation}
L'identité ci-dessus nous permettra de restreindre notre preuve au cas de rang~$1$. 

\section{Généralités sur les groupes algébriques et schémas en modules}\label{sec:generalité}
Dans cette section, nous rappelons quelques résultats classiques de la théorie des groupes algébriques qui nous serviront dans notre étude. Nous utiliserons la pleine puissance de deux théorèmes : celui de Barsotti-Chevalley (théorème \ref{thm:barsotti-chevalley}) ainsi que la classification des groupes algébriques annulés par décalage en caractéristique non nulle (théorème \ref{thm:equiv-cat}). 

\subsection{Théorème de Barsotti-Chevalley et conséquence}
Le théorème qui suit joue un rôle majeur dans notre classification (voir \cite[thm. 10.5]{milne} pour une démonstration):
\medskip
\begin{theo}[Barsotti-Chevalley]\label{thm:barsotti-chevalley}
Tout groupe algébrique connexe $G$ sur un corps parfait s'insère dans une suite exacte de groupes algébriques:
\begin{equation}\label{eq:barsotti-chevalley}
1\longrightarrow H\longrightarrow G\longrightarrow E\longrightarrow 1 
\end{equation}
où $E$ est une variété abélienne et $H\subset G$ est un sous-groupe algébrique normal connexe et affine.
\end{theo}
\medskip
Soit $A$ un anneau commutatif unitaire. Lorsque $G$ est équipé d'une structure de schéma en $A$-modules, on peut en dire plus:
\medskip
\begin{prop}\label{prop:barsotti-chevalley-A}
Soit $G$ un schéma en $A$-modules algébrique et connexe sur un corps parfait. Alors chaque  terme ($H$ ou $E$) de  la suite \eqref{eq:barsotti-chevalley} peut-être canoniquement muni d'une structure  de schéma en $A$-modules.
\end{prop}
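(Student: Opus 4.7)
La stratégie consiste à transporter la structure de $A$-module de $G$ aux termes $H$ et $E$ en s'appuyant sur la canonicité de la décomposition de Barsotti-Chevalley : le sous-groupe $H$ s'identifie à la partie affine maximale de $G$, uniquement déterminée, et $E = G/H$. Pour chaque $a \in A$, notons $\varphi(a) : G \to G$ l'endomorphisme de schémas en groupes associé à l'action de $a$. Il suffit d'établir que $\varphi(a)$ préserve $H$ : la structure de $A$-module se transmettra alors à $H$ par restriction, et à $E$ par passage au quotient.

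Pour montrer $\varphi(a)(H) \subset H$, considérons la composée
\[
H \xrightarrow{\varphi(a)|_H} G \xrightarrow{\pi} E
\]
où $\pi$ désigne la projection sur la variété abélienne $E$. C'est un morphisme de schémas en groupes d'un groupe algébrique connexe affine vers une variété abélienne ; un tel morphisme est nécessairement trivial (toute fonction régulière sur une variété abélienne est constante, si bien que tout morphisme depuis un schéma affine se factorise par un point, nécessairement l'élément neutre en tenant compte des structures de groupe ; voir par exemple \cite{milne}). On en déduit $\varphi(a)(H) \subset \ker \pi = H$, puis que $\varphi(a)$ induit par restriction un endomorphisme $\varphi_H(a):H\to H$ et, par passage au quotient, un endomorphisme $\varphi_E(a):E\to E$.

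Les identités qui font de $\varphi : A \to \End_{\operatorname{grp}}(G)$ un morphisme d'anneaux compatible avec l'addition des points valent alors automatiquement pour $\varphi_H$ et $\varphi_E$ : chaque telle identité est une égalité entre morphismes de $G$ qui se restreint à $H$ et descend à $E$. Ceci munit canoniquement $H$ et $E$ de structures de schémas en $A$-modules, et la suite \eqref{eq:barsotti-chevalley} devient une suite exacte courte de schémas en $A$-modules. L'unique difficulté est le résultat classique invoqué ci-dessus sur la trivialité des morphismes d'un groupe algébrique connexe affine vers une variété abélienne ; tout le reste n'est qu'une vérification formelle utilisant la caractérisation canonique de $H$.
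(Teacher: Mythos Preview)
Your proof is correct and follows essentially the same route as the paper: show that the composition $H\to G\xrightarrow{\varphi(a)}G\to E$ vanishes (affine connected group to abelian variety), deduce that $\varphi(a)$ restricts to $H$ and descends to $E$, then check the $A$-module axioms by uniqueness. One small slip: your parenthetical justification is backwards --- the fact that global regular functions on an abelian variety are constant shows that morphisms \emph{from} an abelian variety \emph{to} an affine scheme are constant, not the other way around; for the direction you need, cite directly the standard result (e.g.\ \cite[lem.~2.3]{conrad}, which is what the paper uses) or argue via the image being a connected affine closed subgroup of $E$, hence trivial.
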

\medskip
\begin{proof}
\'Etant donné $a\in A$, la structure de schéma en $A$-modules de $G$ produit un endomorphisme $\varphi(a)$ du groupe algébrique $G$. La composition 
\[
H\longrightarrow G\xrightarrow{\varphi(a)} G \longrightarrow E
\]
est un morphisme de groupes algébriques entre un groupe linéaire et une variété abélienne, et est donc nulle d'après \cite[lem. 2.3]{conrad}. L'exactitude de \eqref{eq:barsotti-chevalley} signifie que $\varphi(a)$ se factorise par $H\to G$ en un unique morphisme $\varphi_{H}(a):H\to H$. Puisque la catégorie des groupes algébriques commutatifs est abélienne, il existe un unique morphisme $\varphi_{E}(a):E\to E$ tel que \eqref{eq:barsotti-chevalley} se complète en un diagramme commutatif :
\begin{equation}
\begin{tikzcd}
1 \arrow[r] & H \arrow[r] \arrow[d,"\varphi_{H}(a)"] & G \arrow[r]\arrow[d,"\varphi(a)"] & E \arrow[r]\arrow[d,"\varphi_{E}(a)"] & 1 \\
1 \arrow[r] & H \arrow[r] & G \arrow[r] & E \arrow[r] & 1.
\end{tikzcd}\nonumber
\end{equation}
Par unicité, il est facile de voir que la donnée des $(\varphi_{H}(a))_{a\in A}$ et des $(\varphi_{E}(a))_{a\in A}$ enrichit $H$ et $E$ respectivement de structures de schéma en $A$-modules. 
\end{proof}

\subsection{Groupes unipotents en caractéristique non nulle}
Rappelons qu'un groupe algébrique est dit \emph{unipotent} si et seulement s'il admet une série centrale normale dont les quotients successifs sont des fermés de $\bG_a$.
\medskip
\begin{rema}
Cette définition est équivalente à l'existence de vecteurs invariants dans les représentations fidèles donnée usuellement dans la littérature (cf. \cite[prop. 15.23]{milne}).
\end{rema}
\medskip
Fixons $k$ un corps de caractéristique $p>0$ ainsi que $k^{\operatorname{perf}}$ sa perfection. Soit $G$ un groupe algébrique commutatif affine sur $k$ de décalage\footnote{Ou, en allemand, \emph{Verschiebung}.} $V_G$. \\

Dans la classe des groupes algébriques unipotents, nous distinguons les fermés de $\bG_a^d$ qui sont classés par le résultat suivant (cf. \cite[Thm. 6.6]{demazure-gabriel}).
\medskip
\begin{theo}\label{thm:caract-unipotent}
$G$ s'identifie à un sous-groupe fermé de $\bG_a^d$ pour un certain entier $d>0$ si, et seulement si, son décalage $V_G$ est nul. 
\end{theo}
\medskip
\begin{defi}
Soit $G$ un groupe algébrique sur $k$. On désigne par
\[
M(G):=\Hom_{\text{grp}/k}(G,\bG_a)
\]
le $k\{\tau\}$-module à gauche obtenu en pré-composant par les éléments de $\End_{\text{grp}/k}(\bG_a)$.
\end{defi}
\medskip
Rappelons que $\End_{\text{grp}/k}(\bG_a)$ est isomorphe à l'anneau non-commutatif $k\{\tau\}$, munissant alors $M(G)$ d'une structure de $k\{\tau\}$-module à gauche. La construction $G\mapsto M(G)$ se promeut en une équivalence de catégories (e.g.  cor. 6.7 {\it loc. cit.}):
\medskip
\begin{theo}\label{thm:equiv-cat}
L'assignation $G\mapsto M(G)$ définit une équivalence entre la catégorie des groupes algébriques commutatifs affines sur $k$ annulés par décalage, et la catégorie des $k\{\tau\}$-modules à gauche de type fini.  Un pseudo-inverse est donné par
\[
M\longmapsto U(M):\left(R\mapsto \Hom_{k\{\tau\}}(M,R)\right)
\]
où les homomorphismes sont pris dans la catégorie des $k\{\tau\}$-modules à gauches, et où la $k$-algèbre $R$ est vu comme un $k\{\tau\}$-module à gauche via $\tau\cdot r=r^p$.
\end{theo}
\medskip
\begin{rema}
Notons que cette équivalence fait correspondre aux modules à gauche libres sur $k\{\tau\}$ les puissances de $\bG_a$. Par abus, nous qualifierons ces groupes d'\emph{unipotents libres}.
\end{rema}
\medskip
Une première conséquence de cette classification et le résultat suivant :
\medskip
\begin{coro}\label{cor:grpalg-lisse-expp=unipo}
Si $G$ est un groupe algébrique connexe lisse d'exposant $p$, alors c'est une forme de $\bG_a^d$ pour un certain $d>0$, qui se déploie sur $k^{\operatorname{perf}}$.
\end{coro}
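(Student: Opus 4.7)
\emph{Plan de preuve.} Nous adoptons tacitement la convention commutative (implicite dans le cadre du paragraphe), et nous remplaçons $k$ par $k^{\operatorname{perf}}$, ce que l'énoncé autorise et qui préserve les hypothèses (connexe, lisse, exposant $p$). Le plan consiste à montrer d'abord que $G$ est affine, puis à le plonger dans une puissance de $\bG_a$ via le théorème~\ref{thm:caract-unipotent}, et enfin à conclure par la classification du théorème~\ref{thm:equiv-cat}.

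D'abord, le théorème~\ref{thm:barsotti-chevalley} fournit une suite exacte $1 \to H \to G \to E \to 1$ avec $H$ affine connexe et $E$ variété abélienne. L'exposant $p$ passe au quotient $E$, mais $[p]_E$ est une isogénie de degré $p^{2 \dim E}$, non nulle dès que $\dim E > 0$. Ainsi $E = 0$ et $G = H$ est affine. Ensuite, $G$ étant commutatif, on a $0 = [p]_G = V_G \circ F_G$, où $F_G : G \to G^{(p)}$ désigne le Frobenius relatif. Ce dernier est fini plat de degré $p^{\dim G}$ pour $G$ lisse, donc un épimorphisme, ce qui entraîne $V_G = 0$. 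Le théorème~\ref{thm:caract-unipotent} permet alors d'identifier $G$ à un sous-groupe fermé de $\bG_a^d$ pour un certain $d$.

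Via le théorème~\ref{thm:equiv-cat}, $G$ correspond à un $k\{\tau\}$-module à gauche de type fini $M(G)$, qu'il reste à montrer libre. Puisque $k$ est parfait, $k\{\tau\}$ est un anneau principal (non commutatif), et $M(G)$ se décompose en $F \oplus T$ avec $F$ libre et $T$ de torsion. La torsion $T$ se scinde encore en une partie $\tau$-primaire (où $\tau$ agit nilpotemment) et une partie copremière à $\tau$ (où $\tau$ agit bijectivement), correspondant respectivement sous $U$ aux composantes infinitésimale et étale du schéma en groupes fini $U(T)$. La lissité de $G$ élimine la première, la connexité la seconde ; on conclut $T = 0$, d'où $M(G) \cong k\{\tau\}^{d'}$ et $G \cong \bG_a^{d'}$.

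La principale difficulté se loge dans la dernière étape : il faut traduire les propriétés géométriques de lissité et de connexité en propriétés algébriques de $M(G)$ via l'anti-équivalence $U$ du théorème~\ref{thm:equiv-cat}. Essentiellement, $U$ envoie les modules sur lesquels $\tau$ agit nilpotemment vers les schémas en groupes infinitésimaux (extensions de $\alpha_p$), et les modules sur lesquels $\tau$ agit bijectivement vers les schémas en groupes étales (lieux de zéros de polynômes additifs séparables).
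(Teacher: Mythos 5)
Votre démonstration est correcte et suit pour l'essentiel la même stratégie que celle de l'article : Barsotti--Chevalley pour éliminer la partie abélienne, l'annulation du décalage déduite de la lissité et de l'exposant $p$ pour plonger $G$ dans $\bG_a^d$ (théorème \ref{thm:caract-unipotent}), puis la structure des $k\{\tau\}$-modules de type fini via le théorème \ref{thm:equiv-cat} pour conclure à la liberté. Votre traitement de la partie de torsion (la lissité élimine la composante où $\tau$ est nilpotent, c'est-à-dire infinitésimale, la connexité la composante où $\tau$ est bijectif, c'est-à-dire étale) est même légèrement plus précis que celui de l'article, qui n'invoque que la connexité.
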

\medskip
\begin{proof}
Il vient du théorème de Barsotti-Chevalley que $G_{k^{\operatorname{perf}}}$ est commutatif affine: en effet, dans la suite exacte \eqref{eq:barsotti-chevalley} associée à $G_{k^{\operatorname{perf}}}$, la variété abélienne $E$ est d'exposant $p$ et est donc triviale. Ainsi, $G_{k^{\operatorname{perf}}}=H$ est commutatif affine.

Comme $G$ est d'exposant $p$, il en est de même de $H$ et on a l'égalité $F_H V_H=p=0$. Comme $H$ est lisse, $F_H$ est bijectif et cela entraîne $V_H=0$. $H$ est alors annulé par décalage et on conclut par le théorème \ref{thm:caract-unipotent} que $H$ est un fermé de $\bG_{a,k^{\operatorname{perf}}}^d$.  Il vient que $G$ est unipotent \cite[Cor. 15.9]{milne}.

Le $k^{\operatorname{perf}}\{\tau\}$-module à gauche $\Hom_{\text{grp}/k^{\operatorname{perf}}}(G_{k^{\operatorname{perf}}},\bG_{a,k^{\operatorname{perf}}})$ est de type fini et donc se décompose en la somme directe d'un module libre et d'un module de torsion (par le théorème de structure des $k^{\operatorname{perf}}\{\tau\}$-modules e.g. \cite[Prop. 1.4.4]{anderson}). Puisque $G$ est connexe, il en est de même pour $G_{k^{\operatorname{perf}}}$ et cela entraîne l'annulation de la partie de torsion.  D'après le théorème \ref{thm:equiv-cat}, on obtient que $G_{k^{\operatorname{perf}}}$ est unipotent libre.
\end{proof}

Une seconde conséquence est le résultat suivant démontré initialement par Russell (nous renvoyons à \cite[thm. 2.1 \& 3.1]{russell} pour la preuve de la proposition ci-dessous, conséquence du théorème \ref{thm:equiv-cat}).
\medskip
\begin{prop}\label{prop:russel}
Si $G$ est une forme de $\bG_a$, il existe deux entiers $n$ et $m$ ainsi que des éléments $a_0$, ..., $a_m$ de $k$, $a_m\neq 0$, tels que $\bG$ soit isomorphe au sous-groupe de $\bG_a^2=\Spec k[x,y]$ donné par l'équation $y^{p^n}=a_0x+a_1 x^p+...+a_m x^{p^m}$. Si $\bG \not\cong \bG_a$, alors $\End_{\text{grp}/k}(G)$ est un corps fini.  
\end{prop}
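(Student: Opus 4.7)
The plan is to translate the problem to the category of left $k\{\tau\}$-modules via the equivalence of Theorem~\ref{thm:equiv-cat}. Since $G$ is a form of $\bG_a$, we have $G_{k^{\operatorname{perf}}} \cong \bG_{a,k^{\operatorname{perf}}}$, and the Verschiebung of $\bG_a$ vanishes; as the vanishing of $V$ may be tested after the faithfully flat extension $k \to k^{\operatorname{perf}}$, we get $V_G = 0$. Consequently $G$ is classified by the finitely generated left $k\{\tau\}$-module $M := M(G)$, and the hypothesis translates into the free-of-rank-one condition $M \otimes_k k^{\operatorname{perf}} \cong k^{\operatorname{perf}}\{\tau\}$.

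The heart of the argument is then a structural result for such $M$: I would show that $M$ admits a presentation with (at most) two generators $x, y$ and a single relation of the shape $\tau^n y - \phi(\tau) x = 0$, where $\phi(\tau) = a_0 + a_1 \tau + \ldots + a_m \tau^m \in k\{\tau\}$ with $a_m \neq 0$. A clean way to proceed is to view $M$ as a $k\{\tau\}$-submodule of $k^{\operatorname{perf}}\{\tau\}$ (using faithful flatness of $k^{\operatorname{perf}}/k$) and to select a generator $x$ of minimal $\tau$-degree by means of the left division algorithm in $k^{\operatorname{perf}}\{\tau\}$. Either $M$ is already generated by $x$ as a $k\{\tau\}$-module --- in which case $M \cong k\{\tau\}$ and $G \cong \bG_a$ --- or one can locate a second element $y \notin k\{\tau\} \cdot x$ such that some minimal power $\tau^n y$ lies in $k\{\tau\} \cdot x$, yielding the desired relation.

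Feeding this presentation into the pseudo-inverse functor $M \mapsto U(M)$ of Theorem~\ref{thm:equiv-cat} realizes $G$ as the closed subgroup of $\bG_a^2 = \Spec k[x,y]$ cut out by the equation $y^{p^n} = a_0 x + a_1 x^p + \ldots + a_m x^{p^m}$, which is exactly the claimed Russell form.

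For the assertion on endomorphisms, the contravariant equivalence gives $\End_{\text{grp}/k}(G) \cong \End_{k\{\tau\}}(M)^{\operatorname{op}}$. A $k\{\tau\}$-linear endomorphism $f$ of $M$ is determined by the images $f(x), f(y) \in M$ subject to the compatibility $\tau^n f(y) = \phi(\tau) f(x)$. When $G \not\cong \bG_a$ the relation is non-trivial, and this divisibility constraint --- read inside $k^{\operatorname{perf}}\{\tau\}$ --- forces $f(x)$ to belong to a finite-dimensional $\bF_p$-subspace. Since $\End_{k\{\tau\}}(M)$ embeds into the domain $k^{\operatorname{perf}}\{\tau\}$ (via its action on any non-zero element of $M$), one gets a finite-dimensional $\bF_p$-algebra without zero divisors, hence a finite field. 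The main technical obstacle is establishing this final finiteness bound; everything upstream is essentially bookkeeping on the $k\{\tau\}$-module presentation.
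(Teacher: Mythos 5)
Le texte ne démontre pas cette proposition : il la cite d'après Russell \cite{russell} (thm. 2.1 et 3.1) en signalant qu'elle est conséquence du théorème \ref{thm:equiv-cat}. Votre plan suit exactement cette route (tout traduire en modules à gauche de type fini sur $k\{\tau\}$), donc le cadre est le bon ; mais les deux étapes décisives manquent, et ce ne sont pas de simples vérifications. D'abord, l'affirmation structurelle — $M=M(G)$ admet une présentation à deux générateurs et une seule relation $\tau^n y=\phi(\tau)x$ — est précisément le contenu du théorème 2.1 de Russell, et votre recette ne l'établit pas : choisir $x\in M$ de degré minimal dans $k^{\operatorname{perf}}\{\tau\}$ puis « localiser » un $y$ avec $\tau^n y\in k\{\tau\}x$ ne montre ni que $x$ et $y$ engendrent $M$, ni que le module des relations est engendré par cette unique relation, ni même qu'un tel $y$ existe. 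Attention de plus aux raisonnements naïfs de modules : $k\{\tau\}$ est principal à gauche mais n'est pas de Ore à droite quand $k$ est imparfait, de sorte qu'un module de type fini sans torsion de rang un ne se plonge en général pas dans $k\{\tau\}$ — c'est exactement pour cela que des formes non triviales existent, et c'est pourquoi la présentation exige un vrai argument. Une mise en œuvre correcte descend le déploiement à un niveau fini $k^{1/p^n}$, observe que le tordu de Frobenius $G^{(p^n)}$ est alors $k$-isomorphe à $\bG_a$, prend pour premier générateur l'homomorphisme purement inséparable $G\to\bG_a$ ainsi obtenu (élément de $M$ dont le sous-module cyclique est libre, de quotient de torsion tué par $\tau^n$), prouve que ce quotient est cyclique, relève un générateur pour obtenir le second générateur, et lit seulement alors l'unique relation en utilisant l'absence de torsion de $M$ (laquelle mérite elle-même un mot : un groupe lisse connexe n'a pas de quotient fini non trivial). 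Rien de cela ne figure dans votre esquisse.

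Ensuite, la finitude de $\End_{\text{grp}/k}(G)$ lorsque $G\not\cong\bG_a$ — la substance du théorème 3.1 de Russell — est précisément l'étape que vous reportez (« the main technical obstacle »). La contrainte $\tau^n f(y)=\phi(\tau)f(x)$ ne borne rien à elle seule : pour $G\cong\bG_a$ la contrainte analogue autorise l'anneau infini $k\{\tau\}$, donc toute preuve doit exploiter quantitativement la non-trivialité de la forme (chez Russell, le fait qu'un coefficient de $\phi$ n'est pas une puissance $p$ème dans le sous-corps pertinent). Les réductions qui entourent ce point sont correctes — $\End_{\text{grp}/k}(G)\cong\End_{k\{\tau\}}(M)^{\operatorname{op}}$, le plongement dans l'anneau intègre $(k^{\operatorname{perf}}\{\tau\})^{\operatorname{op}}$, et « intègre fini entraîne corps fini » par Wedderburn — mais sans la borne de finitude la seconde assertion reste non démontrée. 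Bref : bon cadre, identique à la source citée par l'article, mais les deux énoncés clés attendent encore leurs preuves.
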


\subsection{Groupes algébriques affines}
On considère deux classes de groupes algébriques affines: ceux \emph{de type multiplicatif} et ceux \emph{unipotents}.  Sur un corps parfait, ces deux classes suffisent à décrire les groupes algébriques affines (voir théorème \ref{thm:grp-uni}). \\

On rappelle qu'un groupe algébrique  $M$ sur $L$ est \emph{de type multiplicatif} si et seulement si, pour une clôture séparable $L^s$ de $L$, il existe un groupe abélien $\Gamma$ de type fini tel que $M_{L^s}$ représente $R\mapsto \Hom_{\bZ} (\Gamma,R^\times)$. Les groupes algébriques de type multiplicatif sur $L$ sont uniquement déterminés par leur groupe de caractères (e.g. \cite[\S 14.f]{milne}):
\medskip
\begin{theo}\label{thm:multiplicatif}
Le foncteur
\[
M \longmapsto \Gamma:=\Hom_{\operatorname{grp}/L^s}(M_{L^s},\bG_{m,L^s})
\]
définit une équivalence de catégories entre celle des groupes algébriques de type multiplicatif sur $L$ et les $\bZ$-modules de type fini munis d'une action continue du groupe profini $G_L=\Gal(L^s|L)$. Un pseudo-inverse est donné par 
\[
\Gamma \longmapsto M(\Gamma):=\Spec (L^s[\Gamma]^{G_L})
\]
où $G_L$ agit diagonalement sur $L^s[\Gamma]$.
\end{theo}
\medskip

Nous nous inspirons de cette équivalence pour décrire le module de Tate d'un groupe algébrique de type multiplicatif $M$: 
\medskip
\begin{prop}\label{prop:multi}
Soit $p$ un nombre premier, $M$ un groupe algébrique sur $L$ de type  multiplicatif et $\Gamma$ son groupe des caractères. En tant que représentations de $G_L$,
\[
\operatorname{T}_p M \cong \Hom_{\bZ}\left(\varinjlim \Gamma/p^n \Gamma, (L^s)^{\times}\right)
\]
où $\sigma\in G_L$ agit à droite comme $f\mapsto \sigma\circ f\circ \sigma^{-1}$. En particulier,
\begin{enumerate}[label=$(\alph*)$]
    \item si $\Gamma$ a de la $p$-torsion, il en est de même pour $\operatorname{T}_p M$;
    \item le rang de $\operatorname{T}_p M$ sur $\bZ_p$ est celui de $\Gamma$ sur $\bZ$.
\end{enumerate}

\end{prop}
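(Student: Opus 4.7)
La preuve repose essentiellement sur l'équivalence du théorème \ref{thm:multiplicatif}, qui pour toute $L^s$-algèbre $R$ fournit un isomorphisme fonctoriel $M(R) \cong \Hom_\bZ(\Gamma, R^\times)$. Appliqué à $R = L^s$, on en déduit un isomorphisme $G_L$-équivariant $M(L^s) \cong \Hom_\bZ(\Gamma, (L^s)^\times)$, l'action à droite de $\sigma \in G_L$ étant bien $f\mapsto \sigma\circ f\circ \sigma^{-1}$ (où $\sigma$ agit simultanément sur $\Gamma$ via la structure continue de $G_L$-module, et sur $(L^s)^\times$ par ses coefficients). La première étape de la preuve consistera donc à mettre au clair cette identification ainsi que la compatibilité galoisienne.

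L'étape centrale est le calcul de la $p^n$-torsion. Un morphisme $f \colon \Gamma \to (L^s)^\times$ est annulé par $p^n$ si et seulement s'il est à valeurs dans $\mu_{p^n}(L^s)$ ; dans ce cas $f(p^n\gamma) = 1$ pour tout $\gamma \in \Gamma$, si bien que $f$ se factorise à travers $\Gamma/p^n\Gamma$. Réciproquement, tout homomorphisme $\Gamma/p^n\Gamma \to (L^s)^\times$ est automatiquement à valeurs dans $\mu_{p^n}(L^s)$. On en tire un isomorphisme $G_L$-équivariant $M[p^n](L^s) \cong \Hom_\bZ(\Gamma/p^n\Gamma, (L^s)^\times)$. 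Sous cette identification, les transitions $M[p^{n+1}](L^s) \to M[p^n](L^s)$ données par multiplication par $p$ correspondent à la pré-composition par les morphismes $\Gamma/p^n\Gamma \to \Gamma/p^{n+1}\Gamma$, $[\gamma]\mapsto[p\gamma]$. L'adjonction habituelle $\Hom(\varinjlim \cdot, -) = \varprojlim \Hom(\cdot, -)$ fournit alors directement la formule annoncée.

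Les deux corollaires se déduisent en décomposant $\Gamma = \bZ^r \oplus T$ via le théorème de structure des modules de type fini sur $\bZ$. Chaque facteur $\bZ$ contribue $\varinjlim_n \bZ/p^n\bZ \cong \bQ_p/\bZ_p$ à la colimite, et $\Hom_\bZ(\bQ_p/\bZ_p, (L^s)^\times) \cong \operatorname{T}_p\bG_m \cong \bZ_p(1)$ apporte un facteur libre de rang $1$ sur $\bZ_p$, d'où (b). Pour (a), un élément de $p$-torsion non trivial de $\Gamma$ induit un caractère à valeurs dans $\mu_p(L^s)$ dont on tire, via la formule, un élément de $p$-torsion dans $\operatorname{T}_pM$. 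Le point le plus délicat me semble être la vérification soigneuse de la compatibilité avec l'action galoisienne à chaque étape de l'identification, en particulier pour la torsion où l'action mélange celles sur $\Gamma$ et sur $(L^s)^\times$ ; les autres arguments relèvent de manipulations classiques d'adjonctions et du théorème de structure.
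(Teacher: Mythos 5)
Pour l'isomorphisme principal, votre démarche est exactement celle du papier : identification $G_L$-équivariante $M(L^s)\cong\Hom_{\bZ}(\Gamma,(L^s)^\times)$ tirée du théorème \ref{thm:multiplicatif}, identification de la $p^n$-torsion avec $\Hom_{\bZ}(\Gamma/p^n\Gamma,(L^s)^\times)$, puis passage à la limite via $\varprojlim_n\Hom(\Gamma/p^n\Gamma,-)=\Hom(\varinjlim_n\Gamma/p^n\Gamma,-)$ ; vous êtes même plus précis que le texte sur les flèches de transition (pré-composition par $[\gamma]\mapsto[p\gamma]$). Le seul point que vous ne faites qu'annoncer est la vérification de l'équivariance galoisienne, c'est-à-dire l'identité $\theta(\sigma\circ\varphi)=\sigma\circ\theta(\varphi)\circ\sigma^{-1}$, qui occupe en réalité la moitié de la démonstration du papier (unicité du prolongement $L^s$-linéaire de $\varphi$ à $L^s[\Gamma]$) ; il faudrait l'écrire, mais l'idée est correcte.

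En revanche, votre déduction de $(a)$ ne peut pas aboutir. Avec les flèches de transition que vous décrivez, $\varinjlim\Gamma/p^n\Gamma\cong\Gamma\otimes_{\bZ}\bQ_p/\bZ_p$, et la partie de torsion $T$ de $\Gamma$ vérifie $T\otimes_{\bZ}\bQ_p/\bZ_p=0$ puisque $\bQ_p/\bZ_p$ est divisible : la torsion de $\Gamma$ disparaît dans la colimite. La formule que vous venez d'établir donne donc $\operatorname{T}_pM\cong(\operatorname{T}_p\bG_m)^{\operatorname{rang}\Gamma}$, qui est sans torsion ; plus généralement, toute limite projective prise le long de la multiplication par $p$ est sans $p$-torsion. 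Un caractère d'ordre $p$ de $\Gamma$ fournit bien un élément de $p$-torsion de $M[p](L^s)$, mais il ne se relève pas en une suite compatible, donc pas en un élément de $\operatorname{T}_pM$ : la torsion de $\Gamma$ n'est visible qu'aux étages finis $M[p^n](L^s)\cong\Hom_{\bZ}(\Gamma/p^n\Gamma,(L^s)^\times)$, pas dans la limite. Autrement dit, votre étape « dont on tire, via la formule, un élément de $p$-torsion dans $\operatorname{T}_pM$ » échouerait ; cela met d'ailleurs en évidence que l'assertion $(a)$ telle qu'elle est écrite — pour laquelle le papier ne donne aucun argument — demande à être reformulée (par exemple en termes des étages finis, ou de la connexité de $M$), et il aurait fallu le signaler plutôt que d'affirmer une dérivation. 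Notez enfin que $(b)$ requiert $p$ différent de la caractéristique de $L$ : en caractéristique $p$ on a $\Hom_{\bZ}(\bQ_p/\bZ_p,(L^s)^\times)=0$ et le rang chute.
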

\begin{proof}
D'après le théorème \ref{thm:multiplicatif}, on a un isomorphisme de groupes
\begin{equation}\label{eq:idmult}
\theta:\Hom_{L}\!\left(L^s[\Gamma]^{G_L},L^s\right)=M(L^s)\cong M_{L^s}(L^s)\cong\Hom_{\bZ}(\Gamma,(L^s)^\times)
\end{equation}% 
donné par $\varphi \mapsto (\varphi\otimes\id_{L^s[\Gamma]})|_{\Gamma}$ où $\Gamma$ est vu comme un sous-groupe de $(L^s[\Gamma],\times)$. En notant $M[p^n](L^s)$ la $p^n$-torsion de $M(L^s)$ pour un entier positif $n$, on a
\begin{align*}
M[p^n](L^s)&\cong\{f\in \Hom_{\bZ} (\Gamma,(L^s)^\times)~|~\forall\gamma\in\Gamma:~f(\gamma)^{p^n}=0\} \\
&\cong\Hom_{\bZ}\left(\Gamma/p^n \Gamma, (L^s)^{\times}\right)
\end{align*}
et l'expression de $\operatorname{T}_p M$ s'en déduit par passage à la limite sur $n$.\\
Il reste à déterminer l'action de $G_L$ sur $\operatorname{T}_p M$. Nous cherchons  l'unique action de $G_L$ sur $\Hom_{\bZ}(\Gamma,(L^s)^\times)$ qui rend $\theta$ équivariant pour l'action de $G_L$ (l'action sur $M[p^n](L^s)$ et $\operatorname{T}_p M$ s'en déduira par restriction et passage à la limite). Par définition, un automorphisme $\sigma\in G_L$ agit sur $(\varphi:L^s[\Gamma]^{G_L}\to L^s)\in M(L^s)$ par $\sigma\circ \varphi$. D'un autre côté, la flèche
\[
\sigma\circ(\varphi\otimes\id_{L^s[\Gamma]})\circ \sigma^{-1},
\]
où $G_L$ agit diagonalement sur  $L^s[\Gamma]$, est une fonction $L^s$-linéaires qui coïncide avec $\sigma\circ\varphi$ sur $(L^s[\Gamma])^{G_L}$. Une telle fonction étant unique, on a
\[
(\sigma\circ\varphi)\otimes\id_{L^s[\Gamma]}=\sigma\circ(\varphi\otimes\id_{L^s[\Gamma]})\circ \sigma^{-1},
\]
puis
\[
\theta(\sigma\circ\varphi)=\sigma\circ\theta(\varphi)\circ \sigma^{-1}
\]
par restriction à $\Gamma$.
\end{proof}

En utilisant \cite[thm. 17.17+cor.15.17-18]{milne}, on en déduit le théorème annoncé:
\medskip
\begin{theo}\label{thm:grp-uni}
Tout groupe algébrique affine $H$ sur un corps parfait se décompose comme $H\cong U\times M$ où $U$ est unipotent et $M$ est de type multiplicatif. Si $H$ est de plus un schéma en $A$-modules, il en est de même canoniquement pour $U$ et $M$.
\end{theo}

\section{Démonstrations}\label{sec:demo}
On démontre ici les théorèmes énoncés en introduction. Les preuves étant très sensibles à la caractéristique de $A$, nous commencerons par le cas de caractéristique nulle en sous-section \ref{subsec:nulle} puis le cas plus sophistiqué de la caractéristique non nulle en sous-section \ref{sec:modules-elementaire-carnonnulle}.

\subsection{Le cas de la caractéristique nulle}\label{subsec:nulle}
Soit $A$ un anneau de Dedekind finiment engendré de caractéristique nulle. Soit $K$ son corps des fractions; c'est un corps de nombres d'anneau d'entiers $A$. Soit $L$ un corps parfait. Nous prouvons ici les théorèmes \ref{thm:coefficient}.\ref{thm:coefficient-i} et \ref{thm:classification}.\ref{thm:classification-i}. Précisément: 
\medskip
\begin{theo}\label{thm:cas-car-zero}
Soit $\bG$ un schéma en $A$-modules sur $L$ connexe lisse de dimension $1$ et de rang $r\geq 1$. Alors,
\begin{enumerate}
\item\label{item:A=Z} soit $A=\bZ$, auquel cas soit $r=1$ et $\bG$ est une forme de $\bG_m$, soit $r=2$ et $E$ est une courbe elliptique;
\item\label{item:A=Ok} soit $A=\cO_K$ où $K$ est une extension quadratique imaginaire de $\bQ$, auquel cas $r=1$ et $E$ est une courbe elliptique ayant multiplication complexe par $\cO_K$. 
\end{enumerate}
\end{theo}
\medskip
\begin{proof}
En appliquant la proposition \ref{prop:barsotti-chevalley-A}, on obtient une suite exacte de schémas en $A$-modules sur $L$:
\[
1\longrightarrow H \longrightarrow \bG \stackrel{f}{\longrightarrow} E \longrightarrow 1.
\]
Les fibres de $f$ sont les classes de $H$ dans $\bG$, qui ont toutes la même dimension, et donc
\begin{equation}\label{eq:dimension}
\dim H= \dim f^{-1}(\{1_E\})=\dim \bG-\dim E
\end{equation}
(voir par exemple \cite[cor. 14.119]{gortz-wedhorn}). Comme $\dim \bG=1$, nous avons $\dim H\leq 1$. Pour déterminer $H$, on commence par remarquer que:
\medskip
\begin{prop}\label{prop:H}
$H$ est soit trivial, soit une forme de $\bG_m$.
\end{prop}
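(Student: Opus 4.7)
The plan is to analyze the Barsotti--Chevalley sequence via the dimension equation \eqref{eq:dimension}. Since $\dim \bG = 1$, we have $\dim H + \dim E = 1$, leaving two cases. If $\dim H = 0$, then $H$ is trivial (a connected zero-dimensional subgroup of the smooth $\bG$). Otherwise $\dim H = 1$, which forces $E$ to be trivial (as a zero-dimensional abelian variety), so $\bG \cong H$ is a connected smooth affine $A$-module scheme of dimension one.

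In this second case, I apply the structure theorem for affine algebraic groups over perfect fields (Theorem \ref{thm:grp-uni}) to decompose $H \cong U \times M$, where $U$ is unipotent, $M$ is of multiplicative type, and both inherit a canonical $A$-module structure. Since $H$ is connected of dimension one, exactly one factor is one-dimensional and the other is trivial. If $M$ is the nontrivial factor, then $\bG = M$ is a smooth connected one-dimensional group of multiplicative type; by Theorem \ref{thm:multiplicatif} it corresponds to a finitely generated abelian group $\Gamma$ with continuous $G_L$-action, and the combination of one-dimensionality, smoothness, and connectedness forces $\Gamma \cong \bZ$. Hence $M$ is a form of $\bG_m$, as desired.

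The main obstacle is to rule out the alternative, $U$ one-dimensional. I will derive a contradiction with the hypothesis $r \geq 1$ by showing $\operatorname{T}_\ell \bG = 0$ for every maximal ideal $\ell \neq \fc$. The key is the structure of $\End_L(U)$. When $L$ has characteristic zero, $U \cong \bG_a$ (all forms being trivialized by Hilbert $90$, since $\Aut(\bG_a) = \bG_m$), and $\End_L(U) \cong L$. When $L$ has characteristic $p > 0$ and $U \not\cong \bG_a$, Russell's proposition \ref{prop:russel} ensures $\End_L(U)$ is a finite field. In either subcase, the structural morphism $\varphi : A \to \End_L(U)$ takes values in a field whose kernel agrees (via the action on $\Lie_{\bG}(L)$) with $\fc$, so for $\ell \neq \fc$ I pick $a \in \ell \setminus \fc$ with $\varphi(a)$ invertible, yielding $\bG[a^n](L^s) = 0$ and \emph{a fortiori} $\bG[\ell^n](L^s) = 0$, hence $\operatorname{T}_\ell \bG = 0$.

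The remaining subcase, $U \cong \bG_a$ with $L$ of characteristic $p > 0$, is the most delicate as $\End_L(\bG_a) = L\{\tau\}$ is not a field. Here I exploit the hypothesis that $A$ has characteristic zero: multiplication by $p \in A$ is identically zero on $\bG_a$, so $\varphi$ factors through the finite quotient $A/pA$. Every finite subring of $L\{\tau\}$ is contained in the scalar subring $L$ (an element of positive $\tau$-degree generates an infinite subring, since its iterated powers have strictly increasing $\tau$-degree), so $\varphi$ factors through $L$ and we are reduced to the scalar argument of the previous paragraph, completing the proof.
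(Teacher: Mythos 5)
Your argument is correct in the characteristic-zero setting in which the paper's proof of Theorem \ref{thm:cas-car-zero} is actually written, and it follows the same skeleton as the paper: the dimension count \eqref{eq:dimension} from Barsotti--Chevalley, the decomposition $H\cong U\times M$ of Theorem \ref{thm:grp-uni}, and the rank hypothesis to exclude a one-dimensional unipotent part. The two halves are, however, handled by genuinely different sub-arguments. To kill the unipotent case, the paper argues in one line that $\operatorname{T}_p\bG=\operatorname{T}_pM$ would be a torsion $\bZ_p$-module since $\dim M=0$, contradicting Lemma \ref{lem:TpG-sanstorsion}; you instead compute endomorphism rings ($\End_L(\bG_a)\cong L$ in characteristic zero, Russell's Proposition \ref{prop:russel} or the finiteness of $A/pA$ --- i.e.\ $\operatorname{char}A=0$ --- in characteristic $p$) and deduce $\operatorname{T}_\ell\bG=0$ for $\ell\neq\fc$. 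Your route is longer but has the merit of covering $\operatorname{char}L=p$, which the paper's written proof silently discards (``étant unipotent sur un corps de caractéristique $0$''). Conversely, to identify the multiplicative factor as a form of $\bG_m$, the paper again passes through the Tate module (Lemma \ref{lem:TpG-sanstorsion} together with Proposition \ref{prop:multi} force the character group to be torsion-free of rank $1$), whereas you read $\Gamma\cong\bZ$ directly off the smoothness and connectedness of $\bG=M$; both are valid, and yours does not use the rank hypothesis for this half. One simplification: you do not need $\ker\varphi=\fc$; the containment $\ker\varphi\subseteq\fc$ of Lemma \ref{lem:phi-injective} already gives $\varphi(a)\neq 0$, hence invertible, for $a\notin\fc$ once $\End_L(\bG)$ is a field (or reduces to scalars).

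One point to tighten: your repeated claim that a connected zero-dimensional subgroup or factor is trivial is automatic only when $\operatorname{char}L=0$, by Cartier's theorem --- smoothness of $\bG$ does not pass to subgroups, and in characteristic $p$ the subgroups $\alpha_p\subset\bG_a$ and $\mu_p\subset\bG_m$ are connected, zero-dimensional and nontrivial. Since you otherwise take care to treat $\operatorname{char}L=p$, this creates an internal mismatch: in that case the triviality of the zero-dimensional complement of the one-dimensional factor, and the case $\dim H=0$, require either the smooth (canonical) choice of $H$ in Barsotti--Chevalley or a separate argument. In characteristic zero this is harmless, but the justification should invoke Cartier's theorem rather than the smoothness of $\bG$.
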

\medskip
La proposition ci-dessus se déduit du lemme suivant:
\medskip
\begin{lemm}\label{lem:TpG-sanstorsion}
Soit $p$ un nombre premier. Alors $\operatorname{T}_p \bG$ est sans torsion.
\end{lemm}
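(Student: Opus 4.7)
Mon plan est simple et purement formel : je vais démontrer que toute limite inverse de groupes $\bG[p^n](L^s)$, prise pour les flèches de multiplication par $p$, est automatiquement sans $p$-torsion. La lissité, la connexité et le rang de $\bG$ ne joueront aucun rôle, et l'argument reste valide indépendamment des caractéristiques relatives de $A$ et de $L$.

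Je partirais de la définition $\operatorname{T}_p \bG = \varprojlim_n \bG[p^n](L^s)$, où les flèches de transition $\bG[p^{n+1}](L^s) \to \bG[p^n](L^s)$ sont données par $x \mapsto p \cdot x$ (bien définies puisque $p^{n+1} x = 0$ implique $p^n (p x) = 0$). Un élément de $\operatorname{T}_p \bG$ s'écrit alors comme une suite $x = (x_n)_{n\geq 0}$ avec $x_n \in \bG[p^n](L^s)$ et la relation de compatibilité $p \cdot x_{n+1} = x_n$. Supposons que $p \cdot x = 0$ dans $\operatorname{T}_p \bG$ ; on obtient $p x_n = 0$ dans $\bG(L^s)$ pour tout $n$, donc $x_n \in \bG[p](L^s)$. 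En combinant $x_n = p \cdot x_{n+1}$ avec $p \cdot x_{n+1} = 0$ (puisque $x_{n+1} \in \bG[p](L^s)$), il vient $x_n = 0$ pour tout $n$. D'où $x = 0$, et $\operatorname{T}_p \bG$ est sans $p$-torsion. Comme $\operatorname{T}_p \bG$ est un $\bZ_p$-module, toute torsion éventuelle y est nécessairement $p$-primaire, et l'absence de $p$-torsion équivaut à être sans torsion au sens usuel.

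De manière plus conceptuelle, le même argument s'exprime via l'identification naturelle
\[
\operatorname{T}_p \bG \;\cong\; \Hom_{\bZ}\!\left(\bQ_p/\bZ_p,\; \bG(L^s)\right),
\]
la $p$-divisibilité de $\bQ_p/\bZ_p$ (tout élément est de la forme $p\cdot y$) garantissant qu'un homomorphisme tué par $p$ est automatiquement nul. L'obstacle principal est ici essentiellement inexistant : le lemme est une propriété formelle de la définition du module de Tate, sans aucun contenu géométrique. Je m'attends à ce qu'il serve ensuite, dans la suite de la démonstration du théorème \ref{thm:cas-car-zero}, à contrôler la structure du module de Tate de $\bG$ et d'y rattacher le sous-groupe $H$ issu de la décomposition de Barsotti-Chevalley, en excluant par exemple des parties unipotentes qui produiraient de la $p$-torsion dans $\operatorname{T}_p \bG$.
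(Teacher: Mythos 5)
Votre démonstration est correcte, mais elle suit une route réellement différente de celle du texte. Vous établissez l'énoncé de façon purement formelle : pour les flèches de transition de multiplication par $p$, un élément de $p$-torsion de $\varprojlim_n \bG[p^n](L^s)$ a toutes ses composantes de la forme $x_n=p\cdot x_{n+1}$ avec $x_{n+1}\in\bG[p](L^s)$, donc nulles ; de façon équivalente, c'est la divisibilité de $\bQ_p/\bZ_p$ dans l'identification $\operatorname{T}_p\bG\cong\Hom_{\bZ}(\bQ_p/\bZ_p,\bG(L^s))$, et la réduction de l'absence de torsion à l'absence de $p$-torsion pour un $\bZ_p$-module est correcte. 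Aucune hypothèse sur $\bG$ (rang, lissité, connexité, caractéristique) n'intervient. Le texte procède autrement : il déduit le lemme de l'hypothèse de rang, en localisant la torsion éventuelle en un idéal maximal $\ell$ de $A$ au-dessus de $p$ et en invoquant la liberté de $\operatorname{T}_\ell\bG$ sur $A_\ell$ (un $A_\ell$-module libre est sans $\bZ_p$-torsion, $A_\ell$ étant fini et plat sur $\bZ_p$). Votre argument est plus élémentaire et plus général ; celui du texte, en passant par la liberté, enregistre au passage une information supplémentaire qui sert réellement dans la preuve de la proposition \ref{prop:H} : $\operatorname{T}_p\bG$ y est non nul (de rang $\geq 1$ sur $\bZ_p$), et la contradiction qu'on tire du fait que $\operatorname{T}_p\bG$ serait un $\bZ_p$-module de torsion exige cette non-nullité, que votre preuve formelle ne fournit pas (mais que l'énoncé du lemme ne réclame pas non plus). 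Notez enfin que votre argument, valable pour tout schéma en groupes commutatif, montre aussi que $\operatorname{T}_pM$ est sans torsion pour tout groupe de type multiplicatif, ce qui invite à lire le point (a) de la proposition \ref{prop:multi} avec précaution quant au choix des flèches de transition.
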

\medskip
\begin{proof}
Si $\operatorname{T}_p \bG$ possède de la torsion, alors $\bG(L^s)$ possède de la $p$-torsion. Il existe donc $\ell$ un idéal de $A$ au-dessus de $p A$ tel que $\bG(L^s)$ ait de la $\ell$-torsion. Mais c'est impossible car $\operatorname{T}_\ell \bG$ est libre sur $A_\ell$.
\end{proof}

\begin{proof}[Démonstration de la proposition \ref{prop:H}]
On suppose sans perte $H$ non trivial. Puisque $H$ est un groupe algébrique affine sur un corps parfait, il se décompose comme $U\times M$ où $U$ est unipotent et $M$ est de type multiplicatif (théorème \ref{thm:grp-uni}).

On prétend que $U$ est trivial. En effet, si $\dim U= 1$, alors $\dim H=1$ puis $\dim M=\dim E=0$ (par \eqref{eq:dimension}) et donc $\bG=H$. Pour un nombre premier $p$, on a $\operatorname{T}_p \bG=\operatorname{T}_p U\oplus \operatorname{T}_p M=\operatorname{T}_p M$. Mais puisque $\dim M=0$, $\operatorname{T}_p \bG$ serait un $\bZ_p$-module de torsion ce qui est absurde par le Lemme \ref{lem:TpG-sanstorsion}. Ainsi, $\dim U=0$ et, étant unipotent sur un corps de caractéristique $0$, $U$ est trivial. 

On sait donc que $H$ est de type multiplicatif et on note $\Gamma_H$ son groupe des caractères. On a de plus une inclusion $\operatorname{T}_p H\hookrightarrow \operatorname{T}_p \bG$ et, comme $\operatorname{T}_p \bG$ est sans torsion, il en est de même de $\operatorname{T}_p H$. Comme $\operatorname{rang}_{\bZ} \Gamma_H=\dim H=1$, la proposition \ref{prop:multi} donne $\Gamma_H\cong \bZ$. $H$ est donc une forme de $\bG_m$. 
\end{proof}

Il reste deux cas à considérer: comme $\dim \bG=1$, alors $\dim E\leq 1$ et donc soit $E$ est trivial, soit $E$ est une courbe elliptique.  \\
\emph{Si $E$ est trivial:} alors $\bG=H$. C'est donc une forme de $\bG_m$ par la proposition \ref{prop:H}. En particulier, $\End_L(\bG)\cong \bZ$ et puisque $A\hookrightarrow \End_L(\bG)$ par le lemme \ref{lem:phi-injective}, cela impose $A=\bZ$. \\
\emph{Si $E$ est une courbe elliptique:} alors $\dim H=0$ et donc $H$ est trivial par la proposition \ref{prop:H}. $\bG$ est donc une courbe elliptique. Puisque $L$ est de caractéristique nulle, $\End_{L}(E)$ est soit isomorphe à $\bZ$, soit à un ordre d'un corps quadratique imaginaire $F$. Par le lemme \ref{lem:phi-injective}, on a $A\hookrightarrow \End_L(E)$ et, puisque $A$ est Dedekind, cela impose $A=\bZ$ ou $A=\End_L(E)\cong \cO_F$ auquel cas $K\cong F$.
\end{proof}

\begin{rema}
Comme mentionné en remarque \ref{rem:presque-vrai}, la réciproque du théorème \ref{thm:cas-car-zero} est \emph{preque vraie} à ceci près qu'une courbe elliptique à multiplication complexe est un module élémentaire de type $(1)$ si et seulement si son morphisme caractéristique coïncide à l'inclusion. Cette \emph{presque réciproque} se déduit des propositions \ref{prop:reciproque1} et \ref{prop:courbe-elliptique-cm}. 
\end{rema}

\subsection{Le cas de la caractéristique non nulle}\label{sec:modules-elementaire-carnonnulle}
\`A présent, et jusqu'à la fin de cet article, on suppose que $A$ est un anneau de Dedekind finiment engendré de caractéristique $p>0$. Nous désignerons par $K$ son corps des fractions. Soit $L$ un corps qui est également une $A$-algèbre via un morphisme $\delta:A\to L$. Notons $\bF_q$ la clôture algébrique de $\bF_p$ dans $A$: c'est une extension finie de $\bF_p$ par hypothèse, et l'on note $q$ son nombre d'éléments.

\subsubsection*{Fin de la preuve du théorème \ref{thm:coefficient}}
On démontre ici la fin du théorème \ref{thm:coefficient}, à savoir:
\medskip
\begin{theo}\label{thm:iithm1}
Soit $\bG$ un schéma en $A$-modules connexe et lisse sur $L$ de rang $r\geq 1$ et de dimension $1$. Alors il existe une courbe projective lisse $(C,\cO_C)$ sur $\bF_p$ ainsi qu'un point fermé $\infty$ de $C$ tels que $A$ est isomorphe à $\cO_C(C\setminus \{\infty\})$.
\end{theo}
\medskip
Il s'agit dans un premier temps d'énoncer une caractérisation équivalente au fait d'être l'anneau des fonctions régulières sur une courbe projective lisse privé d'un point. Nous dirons d'un élément $a\in A$ qu'il est {\it constant} s'il est algébrique sur $\bF_p$. On considère la propriété suivante:
\begin{enumerate}[label=$(P_A)$]
    \item\label{prPA} {\it Pour tout $a\in A$ non constant, $A$ est un $\bF_p[a]$-module de type fini.}
\end{enumerate}
Alors nous avons:
\medskip
\begin{lemm}\label{prop:caracterisation-courbe-moins-un-point}
L'anneau $A$ vérifie \ref{prPA} si, et seulement si, on peut trouver une courbe projective lisse $(C,\cO_C)$ sur $\bF_p$ ainsi qu'un point fermé $\infty$ de $C$ pour lesquels 
\[
A=\operatorname{H}^0(C\setminus\{\infty\},\cO_C).
\]
\end{lemm}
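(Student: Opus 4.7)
Le plan se scinde selon les deux implications. Pour $(\Leftarrow)$, supposons $A=\cO_C(C\setminus\{\infty\})$ et soit $a\in A$ non constant. La fonction rationnelle $a$ définit un morphisme fini $a:C\to\bP^1_{\bF_q}$. Puisque $a$ est régulière sur $C\setminus\{\infty\}$ sans l'être sur $C$ entier (sinon $a$ serait constante, $C$ étant projective lisse sur $\bF_q$), son unique pôle est $\infty$. Ainsi $a^{-1}(\mathbb{A}^1)=C\setminus\{\infty\}$, et la finitude de $C\to\bP^1$ entraîne celle du morphisme induit $C\setminus\{\infty\}\to\mathbb{A}^1=\Spec \bF_p[a]$, à savoir que $A$ est fini sur $\bF_p[a]$.

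Pour le sens direct $(\Rightarrow)$, supposons \ref{prPA}. On commence par établir que $K=\Frac(A)$ est un corps de fonctions sur $\bF_p$. Pour cela on choisit $a\in A$ non constant (il en existe car $A$ est de dimension $1$ et finiment engendré sur $\bF_p$): par \ref{prPA}, $A$ est fini sur $\bF_p[a]$, donc $K$ est une extension finie de $\bF_p(a)$, et partant de degré de transcendance $1$ sur $\bF_p$. Soit alors $C$ l'unique courbe projective lisse géométriquement intègre sur $\bF_q$ ayant $K$ pour corps de fonctions. Pour chaque idéal maximal $\fm\subset A$, la localisation $A_\fm$ est un anneau de valuation discrète de $K$ correspondant à un unique point fermé de $C$; en utilisant qu'un système fini de générateurs de $A$ sur $\bF_p$ ne possède qu'un nombre fini de pôles sur $C$, on vérifie que cette correspondance identifie $\Spec A$ à un ouvert $C\setminus S$ pour un ensemble fini $S$ de points fermés, avec $A=\cO_C(C\setminus S)$.

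Il reste à montrer que $|S|=1$. Supposons par l'absurde $|S|\geq 2$, choisissons $\infty\in S$ et posons $B:=\cO_C(C\setminus\{\infty\})$. Pour $n$ suffisamment grand, Riemann-Roch fournit une fonction non constante $f\in L(n\cdot\infty)\subset B\subset A$. D'après \ref{prPA}, $A$ est fini sur $\bF_p[f]$; et par le sens direct déjà établi appliqué à $B$, $B$ l'est aussi. Comme $\bF_p[f]\subset B\subset A$, il en résulte que $A$ est fini, donc entier, sur $B$. Or $B$ est un anneau de Dedekind, intégralement clos dans $K$, d'où $A\subset B$. Mais, pour $y\in S\setminus\{\infty\}$, Riemann-Roch donne, pour $N$ suffisamment grand, un élément $g\in L(N\sum_{x\in S}x)\setminus L(N\sum_{x\in S}x-y)$, c'est-à-dire une fonction à pôles dans $S$ et à pôle effectif en $y$; cet élément appartient à $A$ mais pas à $B$, contredisant $A\subset B$. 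D'où $|S|=1$.

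Le point délicat du plan est l'identification de $\Spec A$ avec un ouvert de $C$ à complémentaire fini, qui nécessite quelques arguments classiques mais techniques de géométrie algébrique (liaison entre anneaux de valuation discrète de $K$ et points fermés de $C$, emploi des finitudes); le reste repose sur une application soigneuse de Riemann-Roch pour produire à la fois $f\in B$ non constante et $g\in A\setminus B$ aux pôles contrôlés, combinée à la clôture intégrale de $B$ dans $K$.
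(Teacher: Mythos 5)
Votre démonstration est correcte, mais pour le sens $(\Rightarrow)$ elle suit un chemin plus long que celui de l'article. Pour $(\Leftarrow)$ les deux arguments coïncident essentiellement : morphisme $C\to\bP^1$ induit par $a$, dont l'unique pôle est $\infty$, puis finitude au-dessus de $\Spec \bF_p[a]$ (l'article passe par propre $+$ affine $=$ fini, vous citez directement la finitude du morphisme de courbes) ; notez au passage que la courbe de l'énoncé est sur $\bF_p$ et non $\bF_q$, détail sans conséquence puisque $\bF_q[a]$ est fini sur $\bF_p[a]$. Pour $(\Rightarrow)$, l'article évite entièrement l'étape que vous qualifiez vous-même de délicate (l'identification $A=\cO_C(C\setminus S)$) ainsi que Riemann--Roch : il choisit un point fermé $s$ de $C$ hors de l'image de $\operatorname{Spm}A$ (image non pleine, sinon $A$ serait contenu dans le corps des constantes), observe que $B=\operatorname{H}^0(C\setminus\{s\},\cO_C)\subset A$ puisque $A=\bigcap_\fm A_\fm$, puis applique exactement votre astuce finale --- $A$ entier sur $B$ via \ref{prPA} appliqué à un $b\in B$ non constant, et $B$ intégralement clos dans $K$ --- pour conclure d'un coup $A=B$. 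Votre variante, qui établit d'abord $A=\cO_C(C\setminus S)$ puis force $|S|=1$ par l'absurde à l'aide de Riemann--Roch, est valable et donne en prime la description de $\Spec A$ comme ouvert de la courbe ; elle demande seulement (i) de rédiger complètement l'identification esquissée (points fermés de $C$ en bijection avec les anneaux de valuation discrète de $K$ triviaux sur $\bF_p$, cofinitude de l'image via les pôles des générateurs, et $A=\bigcap_\fm A_\fm$, faits standards mais à écrire), et (ii) d'exclure explicitement le cas $S=\emptyset$, immédiat puisque $A$ n'est pas un corps. Signalons enfin que l'invocation du sens $(\Leftarrow)$ pour la finitude de $B$ sur $\bF_p[f]$ est superflue : la finitude de $A$ sur $\bF_p[f]\subset B$ suffit à rendre $A$ entier sur $B$.
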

\begin{proof}
L'une des deux directions est bien connue: soit $(C,\cO_C)$ une courbe projective lisse sur $\bF_p$ ainsi qu'un point fermé $\infty$ de $C$. Soit $B=\operatorname{H}^0(C\setminus \{\infty\},\cO_C)$. Alors la propriété $(P_B)$ est vérifiée. En effet, soit $b\in B$ un élément non constant, i.e. la flèche $\bF_p[t]\to B$ qui envoie $t\mapsto b$ est injective. On a alors une inclusion de corps $\bF_p(t)\subset K=\Frac(B)$ qui, par l'équivalence de catégorie entre corps de fonctions sur $\bF_p$ et courbes projectives lisses sur $\bF_p$ \href{https://stacks.math.columbia.edu/tag/0BY1}{[0BY1]}, produit un morphisme $C\to \bP^1$ de schémas sur $\bF_p$. Puisque $\bP^1$ est séparé, ce dernier est automatiquement propre \href{https://stacks.math.columbia.edu/tag/01W6}{[01W6]}, et on notera que l'unique point de $C$ qui n'est pas envoyé sur $\Spec \bF_p[t]$ est le point $\infty$. Puisque que la propriété d'être {\it propre} est stable par changement de base \href{https://stacks.math.columbia.edu/tag/01W4}{[01W4]}, le morphisme
\[
\Spec B=C\times_{\bP^1} \Spec \bF_p[t] \longrightarrow \bP^1\times_{\bP^1}\Spec \bF_p[t]=\Spec \bF_p[t] 
\]
est lui même propre. \'Etant également affine \href{https://stacks.math.columbia.edu/tag/01SH}{[01SH]}, il est fini \href{https://stacks.math.columbia.edu/tag/01WN}{[01WN]}. 

Montrons maintenant  la réciproque. Puisque $A$ n'est pas un corps, il existe $a\in A$ non constant. L'anneau $A\otimes_{\bF_p[a]}\bF_p(a)$ est intègre (par platitude) et, par \ref{prPA}, fini sur le corps $\bF_p(a)$. Il vient que $A\otimes_{\bF_p[a]}\bF_p(a)=K$ où $K$ est le corps des fractions de $A$. $K$ est alors une extension finie de $\bF_p(a)=\bF_p(\bP^1)$, d'où l'existence d'une courbe $(C,\cO_C)$ projective lisse sur $\bF_p$ telle que $\Frac A=\bF_p(C)$. Soit $s$ un point fermé de $C$ dans le complémentaire de $S=\operatorname{Spm}A$ (le complémentaire est non vide car sinon $A$ serait le corps des constantes). L'anneau $B=\operatorname{H}^0(C\setminus \{s\},\cO_C)$ est un sous-anneau de $A$ qui est Dedekind et, pour $b\in B$ non constant, l'inclusion $\bF_p[b]\subset A$ est finie par \ref{prPA}. Ainsi, $B\subset A$ est finie, et puisque $B$ est intégralement clos dans $K$, il vient $A=B$.
\end{proof}

Pour démontrer le théorème \ref{thm:iithm1}, il suffit donc d'établir \ref{prPA}. Soit $\bG$ comme dans le théorème \ref{thm:iithm1}.
\medskip
\begin{prop}\label{prop:G-est-Ga}
En tant que groupes algébriques sur $L$, $\bG$ est isomorphe à $\bG_a$.
\end{prop}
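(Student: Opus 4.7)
Mon plan se décompose en deux étapes. \emph{Première étape:} montrer que $\bG$ est une forme de $\bG_a$. Je remarque que la structure de schéma en $A$-modules, conjuguée au fait que $A$ est de caractéristique $p$, force $\bG$ à être d'exposant $p$. En effet, l'identité $p\cdot 1_A=0$ dans $A$ entraîne, via le morphisme structural $\varphi:A\to \End_{\operatorname{grp}/L}(\bG)$, la nullité de l'endomorphisme $[p]_{\bG}=\varphi(p\cdot 1_A)$. Le corollaire \ref{cor:grpalg-lisse-expp=unipo} s'applique alors à $\bG$, connexe, lisse, de dimension $1$ et d'exposant $p$: c'est une forme de $\bG_a$ qui se déploie sur $L^{\operatorname{perf}}$.

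\emph{Seconde étape:} conclure que $\bG\cong \bG_a$ en excluant les formes non triviales. Je raisonne par l'absurde et suppose $\bG\not\cong \bG_a$. La proposition de Russell \ref{prop:russel} affirme alors que $F:=\End_{\operatorname{grp}/L}(\bG)$ est un corps fini. L'image de $\varphi:A\to F$ étant un sous-anneau intègre fini, c'est un corps fini; autrement dit, $\ker\varphi$ est un idéal maximal de $A$. Le lemme \ref{lem:phi-injective} donne l'inclusion $\ker\varphi\subset \fc$. Puisque $A$ est infini (étant Dedekind, finiment engendré sur $\bF_p$, et non un corps), il ne peut s'injecter dans le corps fini $F$; on a donc $\ker\varphi\neq (0)$. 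L'idéal $\fc$ est alors non nul, et, $A$ étant Dedekind, il est maximal; l'inclusion entre idéaux maximaux $\ker\varphi\subset \fc$ donne l'égalité $\ker\varphi=\fc$. L'action de $A$ sur $\bG$ se factorise donc par le corps $A/\fc$. Pour produire une contradiction, je choisis un idéal maximal $\ell\subset A$ distinct de $\fc$: la co-maximalité $\ell+\fc=A$ fournit $a\in\ell$ et $c\in\fc$ avec $a+c=1$. Pour tout $x\in\bG[\ell](L^s)$, on a simultanément $a\cdot x=0$ (par $\ell$-torsion) et $c\cdot x=0$ (car $c\in\ker\varphi$), d'où $x=(a+c)\cdot x=0$. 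On en déduit $\operatorname{T}_\ell\bG=0$, contredisant le rang $r\geq 1$.

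L'obstacle principal est l'analyse du cas $\ker\varphi=\fc$ de la seconde étape: c'est l'exploitation fine de la co-maximalité $\ell+\fc=A$ qui permet d'exhiber une contradiction avec la condition de rang. La structure de corps fini de $\End_{\operatorname{grp}/L}(\bG)$ garantie par la proposition de Russell est la pierre angulaire contraignant le morphisme $\varphi$ suffisamment pour cette analyse; la première étape et l'exclusion de $\ker\varphi=(0)$ sont, elles, essentiellement formelles.
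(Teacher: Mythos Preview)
Your proof is correct and uses the same three ingredients as the paper's (Corollary~\ref{cor:grpalg-lisse-expp=unipo}, Lemma~\ref{lem:phi-injective}, and Proposition~\ref{prop:russel}). The paper phrases the second step more tersely---it asserts directly that $\End_{\operatorname{grp}/L}(\bG)$ is infinite by Lemma~\ref{lem:phi-injective} and then invokes Russell---whereas your contrapositive route via $\ker\varphi=\fc$ and the co-maximality argument spells out explicitly the case $\fc\neq(0)$ that the paper's one-line deduction leaves to the reader.
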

\medskip
\begin{proof}
Par le corollaire \ref{cor:grpalg-lisse-expp=unipo}, $\bG$ est une forme de $\bG_a$. Par le lemme \ref{lem:phi-injective}, on a que $\End_L(\bG)$ est infini. En particulier, $\bG\cong \bG_a$ d'après la proposition  \ref{prop:russel}.
\end{proof}

Soit donc $\square:\bG\stackrel{\sim}{\to}\bG_a$ un isomorphisme de groupes algébriques sur $L$.  La composition
\begin{equation}
\begin{tikzcd}[column sep=4em]
\End_{\text{grp}/L}(\bG) \arrow[r,"f\mapsto \square f\square^{-1}"] & \End_{\text{grp}/L}(\bG_a)\stackrel{\sim}{\longrightarrow} L\{\tau\}
\end{tikzcd} \nonumber
\end{equation}
induit un morphisme (non canonique) $A\to L\{\tau\}$. On notera 
\begin{equation}\label{eq:Phia}
\Phi_a(\tau)=\Phi^{\square}_a(\tau)\in L\{\tau\}
\end{equation}
le polynôme en $\tau$ associé à $a\in A$ de cette façon.
\medskip
\begin{lemm}\label{lem:inversible=constant}
Les éléments inversibles de $A$ sont les éléments constants non nuls.
\end{lemm}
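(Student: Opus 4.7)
The inclusion $\bF_q^\times \subset A^\times$ is immediate, so I must show that a non-constant $a \in A^\times$ cannot exist. Suppose for contradiction that such an $a$ exists; being non-constant, it is transcendental over $\bF_q$ (since $\bF_q$ is by definition algebraically closed in $A$).

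The plan is to translate invertibility in $A$ into invertibility in $L\{\tau\}$ via the map $a\mapsto\Phi_a(\tau)$, and then exploit the ring homomorphism property to manufacture so many units in $A$ that the existence of any maximal ideal of $A$ leads to a contradiction. Since $\varphi(a)\in\End_L(\bG)$ is invertible, $\Phi_a(\tau)$ is a unit of $L\{\tau\}$. But $L\{\tau\}$ is a twisted polynomial ring in which $\deg_\tau$ is additive under multiplication, so its units coincide with $L^\times$; hence $c := \Phi_a(\tau)\in L^\times$. More generally, I claim that whenever $\Phi_b(\tau)\in L^\times$ for some $b\in A$, one has $b\in A^\times$: indeed $\varphi(b)$ is then an automorphism of $\bG$, so $\bG[b](L^s)=0$; moreover $\delta(b)$, which is the constant term of $\Phi_b(\tau)$, is nonzero, so $b\notin\fc$; finally, the identification $\bG[b](L^s)\cong (A/(b))^r$ from the proof of Lemma~\ref{lem:phi-injective} forces $|A/(b)|=1$. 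Applying this to $b=f(a)$ for any $f\in \bF_q[T]$ -- and noting $\Phi_{f(a)}(\tau)=f(c)$ by the ring homomorphism property -- I obtain $f(a)\in A^\times$ whenever $f(c)\neq 0$.

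Next, for any maximal ideal $\ell\subset A$, let $\fm_{\bar a}\in \bF_q[T]$ be the minimal polynomial of the image $\bar a$ in the finite field $A/\ell$ (note $\bar a\neq 0$ since $a\in A^\times$). Then $\fm_{\bar a}(a)\in\ell$, so $\fm_{\bar a}(a)\notin A^\times$; by the preceding step this forces $\fm_{\bar a}(c)=0$. In particular $c$ is algebraic over $\bF_q$, and the irreducibility of both polynomials yields $\fm_{\bar a}=\fm_c$, the minimal polynomial of $c$ over $\bF_q$ -- a \emph{single} polynomial independent of $\ell$. Equivalently, the morphism $\pi\colon \Spec A\to\Spec \bF_q[T]$ induced by $\bF_q[T]\hookrightarrow A,\ T\mapsto a,$ sends every closed point of $\Spec A$ to the single closed point $(\fm_c)$ of $\mathbb{A}^1_{\bF_q}$. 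But $\pi$ is dominant (as $a$ is transcendental, so $\bF_q[T]\to A$ is injective) and of finite type (as $A$ is a finitely generated $\bF_q$-algebra), so by Chevalley's theorem its image is constructible and dense in the one-dimensional scheme $\Spec \bF_q[T]$; it therefore contains a non-empty open subset and meets infinitely many distinct closed points of $\mathbb{A}^1_{\bF_q}$, a contradiction.

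The main obstacle is the opening implication ``$\Phi_b(\tau)\in L^\times\Rightarrow b\in A^\times$'': it is the step where the $A$-module structure on $\bG$ (through the rank hypothesis and Lemma~\ref{lem:phi-injective}) genuinely enters the argument, everything afterwards being formal ring-theoretic manipulation combined with Chevalley's theorem on images of finite-type morphisms.
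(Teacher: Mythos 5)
Your proof is correct, and it rests on the same two pillars as the paper's argument --- the identity $(L\{\tau\})^\times=L^\times$, which makes $\Phi_a$ a scalar, and the finiteness of the residue fields of $A$ (Nullstellensatz) --- but it is organized along a different route. The paper produces a single non-constant $P\in\bF_p[x]$ with $P(a)\notin A^\times$ (otherwise $\bF_p(a)$ would embed into $A$, hence into a finite residue field) and then contradicts the finiteness or non-triviality of $\bG[P(a)](L^s)$ against $\Phi_{P(a)}=P(\Phi_a)\in L^\times$. You instead prove the converse implication ``$\Phi_b\in L^\times\Rightarrow b\in A^\times$'', which makes the rank hypothesis enter explicitly through the counting $\bG[b](L^s)\cong (A/(b))^r$ --- the very identification the paper itself invokes in the proof of Lemma~\ref{lem:phi-injective}, so this is no additional gap --- together with $b\notin\fc$ read off from the constant term of $\Phi_b$; you then globalize: every maximal ideal of $A$ contracts, under $\bF_q[T]\to A$, $T\mapsto a$, to the single maximal ideal determined by the minimal polynomial of $c=\Phi_a$, and Chevalley's theorem yields the contradiction. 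What your version buys is that the use of the rank (and hence of the $A$-module structure) is isolated in one clean step, and you avoid having to argue that the torsion of the chosen non-unit is non-trivial, a point the paper passes over quickly when the characteristic ideal interferes. Two small corrections. First, $\Phi$ restricted to $\bF_q$ is $\delta_\bG|_{\bF_q}$, a possibly non-trivial embedding $\bF_q\hookrightarrow L$; hence for $f\in\bF_q[T]$ one has $\Phi_{f(a)}=f^{\delta}(c)$, the polynomial with coefficients twisted by $\delta_\bG$, not literally $f(c)$. This is harmless --- twisting preserves irreducibility and the property of being a scalar, so the minimal polynomial of $\bar a$ is still pinned down independently of $\ell$ --- but it should be stated. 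Second, the final appeal to Chevalley can be shortened: $A$ is a finitely generated domain over $\bF_p$, hence a Jacobson ring, so the nonzero element $m(a)$ cannot lie in every maximal ideal of $A$.
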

\medskip
\begin{proof}
Comme $A$ est intègre, tout élément constant non nul est inversible. Supposons qu'il existe $a\in A$ inversible mais non constant. On prétend alors qu'il existe un polynôme $P(x)\in \bF_p[x]$ non constant tel que $P(a)\not\in A^{\times}$. En effet, si ce n'est pas le cas, alors $\bF_p(a)$ est un sous-corps de $A$. Mais puisque $A$ est de type fini sur $\bF_p$ , pour un idéal maximal $\fm\subset A$ on aurait $\bF_p\subset \bF_p(a)\subset A/\fm$, ce dernier étant une extension finie du premier. Mais c'est absurde, puisque $\bF_p(a)$ n'est pas fini sur $\bF_p$.

Pour un tel $P(x)$,  le groupe 
\begin{equation}\label{eq:torsion-P(a)}
\bG[P(a)](L^s)\cong \left\{ x\in L^s~ |~ \Phi_{P(a)}(\tau)(x)=0\right\}
\end{equation}
est fini et non trivial (pour l'isomorphisme ci-dessus on a utilisé la Proposition \ref{prop:G-est-Ga}). Mais comme $\Phi_a\in (L\{\tau\})^\times=L^\times$, on a $\Phi_{P(a)}=P(\Phi_a)\in L^{\times}$, ce qui contredit la finitude (ou la non trivialité) du groupe \eqref{eq:torsion-P(a)}.
\end{proof}

\begin{proof}[Démonstration du théorème \ref{thm:iithm1}]
Soit $a\in A$ non constant. On veut montrer que $\bF_p[a]\to A$ est fini. Par le lemme \ref{lem:inversible=constant}, $a$ n'est pas inversible, et donc $\bG[a](L^s)$ est fini et non trivial. Ainsi, $\deg_{\tau}\Phi_a>0$. 

On munit $L\{\tau\}$ d'une structure de $L\otimes_{\bF_p}A$-module où $L$ agit à gauche et $b\in A$ à droite par $\Phi_b(\tau)$. Pour un certain $\rho\in \Gal(\bF_q|\bF_p)$, cette structure de module se factorise par celle de $L\otimes_{\rho,\bF_q}A$-module. Puisque $\deg_{\tau}\Phi_a>0$, par division euclidienne à gauche dans $L\{\tau\}$ on obtient que c'est un $L[a]=L\otimes_{\rho,\bF_q}\bF_q[a]$-module de type fini. En particulier, on a des morphismes de $L$-algèbres:
\[
L[a]\longrightarrow L\otimes_{\rho,\bF_q} A\stackrel{h}{\longrightarrow} L\{\tau\}
\]
dont la composition est de type fini. Pour montrer que le premier morphisme est de type fini, il suffit, par Noetherianité, de montrer que le second est injectif. Mais c'est clair car si $\ker h\neq (0)$ alors $L\otimes_{\rho,\bF_q} A/(\ker h)$ serait de dimension fini sur $L$, ce qui contredit le fait que l'image de $h$ est de dimension infinie sur $L$ (car contient $\Phi_a(\tau)$ et ses puissances).

Nous avons alors montré que $L[a]\to L\otimes_{\rho,\bF_q} A$, et donc $L[a]\to L\otimes_{\bF_p} A$, sont des morphismes finis. La finitude de $A$ sur $\bF_p[a]$ s'en déduit par fidèle platitude. 

%J'ai l'impression que le reste est un argument classique mais tu peux le remettre si tu penses que ça aide à la clarté :Fixons une surjection $f : L[a]^n\to L\otimes_{\bF_p}A$ et écrivons
%\[
%\text{Pour~}j\in \{1,...,r\}, \quad f(e_j) =\sum_{i=0}^r{l_{i,j}\otimes \alpha_{i,j}},
%\]
%où $(e_j)_j$ est la base canonique de $L[a]^n$, $l_{i,j}\in L$, $ \alpha_{i,j}\in A$. En posant $E:=\{\alpha_{i,j}:i,j\}$, un ensemble fini, on a un morphisme $g:\bF_p[a]^E\to A$ tel que $\id_L\otimes_{\bF_p} g $ est, par construction, surjective. Ainsi
%\[
%\coker(\id_L\otimes_{\bF_p} g)=L\otimes_{\bF_p} \coker g=(0)
%\] 
%d'où $\coker g=(0)$ par fidèle platitude de $\bF_p\to L$. 
\end{proof}

\subsubsection*{Preuve du théorème \ref{thm:classification} pour les modules élémentaires de type $(1)$}
On termine ici la preuve de \ref{thm:classification}.\ref{thm:classification-ii}. Soit $\bG$ comme dans le théorème \ref{thm:iithm1}. On fixe une courbe projective lisse $(C,\cO_C)$ sur $\bF_p$ ainsi qu'un point fermé $\infty$ de $C$ pour lesquels
\[
A=\cO_C(C\setminus\{\infty\}).
\]
Soit $\bF_q\subset A$ le corps des éléments constants et soit $q$ son nombre d'éléments. Soit $e=\log_p q$. \\

On liste dans la proposition suivante quelques propriétés des polynômes $\Phi_a(\tau):=\Phi^\square_a(\tau)$ définis en \eqref{eq:Phia}. Notons $\delta_\bG$ le morphisme caractéristique de $\bG$ (définition \ref{def:morphisme-cara}).
\medskip
\begin{prop}\label{prop:propriete-Phia}
\begin{enumerate}[label=$(\roman*)$]
\item\label{item:1} Pour tout $a\in A$, $\Phi_a(\tau)\in L\{\tau^e\}$.
\item\label{item:2bis} Le terme constant de $\Phi_a(\tau)$ est $\delta_\bG(a)$.
\item\label{item:2} Il existe $t\in \{0,...,e-1\}$, tel que  pour tout $c\in \bF_q$,  $\Phi_c(\tau)=c^{p^t}$.
\item\label{item:3} Pour tout $a\in A$ non nul, $\deg_\tau \Phi_a(\tau)=re\deg(a)$.
\end{enumerate}
\end{prop}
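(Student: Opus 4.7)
The plan is to prove (iii), deduce (i), compute (ii) directly, and finally tackle (iv) via a degree count on $\bG[a]$.

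For (iii), each $c \in \bF_q^\times$ satisfies $c^{q-1} = 1$, so $\Phi_c^{q-1} = \Phi_1 = 1$ in $L\{\tau\}$; since $(L\{\tau\})^\times = L^\times$ this forces $\Phi_c \in L^\times$, and together with $\Phi_0 = 0$ the map $c \mapsto \Phi_c$ defines an injective ring morphism $\bF_q \hookrightarrow L$, hence coincides with some Frobenius power $c \mapsto c^{p^t}$ for $t \in \{0, \ldots, e-1\}$. For (i), writing $\Phi_a = \sum_i a_i \tau^i$ and exploiting $\Phi_a \Phi_c = \Phi_c \Phi_a$ (from $ac = ca$ in $A$) together with the relation $\tau^i \lambda = \lambda^{p^i} \tau^i$ and (iii), comparison of $\tau^i$-coefficients gives $a_i c^{p^t} = a_i c^{p^{t+i}}$ for every $c \in \bF_q$ and every $i$. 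Whenever $a_i \neq 0$, this forces $c^{p^i} = c$ on $\bF_q$ (as $c \mapsto c^{p^t}$ is a bijection on $\bF_q$), so $\bF_q \subset \bF_{p^i}$, i.e. $e \mid i$, proving $\Phi_a \in L\{\tau^e\}$. For (ii), via $\square: \bG \xrightarrow{\sim} \bG_a$, the $A$-action on $\Lie_\bG(L) \cong L$ is scalar multiplication by the derivative at $0$ of $x \mapsto \sum_i a_i x^{p^i}$, which equals $a_0$ since $(x^{p^i})' = 0$ for $i \geq 1$.

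For (iv), the finite group scheme $\bG[a] \subset \bG_a = \Spec L[x]$ is cut out by the additive polynomial $\sum_i a_i x^{p^i}$ of $x$-degree $p^{\deg_\tau \Phi_a}$; hence it has order $p^{\deg_\tau \Phi_a}$ as an $L$-scheme. When $a \notin \fc$, (ii) gives $a_0 = \delta_\bG(a) \neq 0$ so that $\bG[a]$ is étale, and combining the rank hypothesis with the Chinese remainder theorem on $(a) = \prod_i \ell_i^{n_i}$ (all $\ell_i \neq \fc$) yields $|\bG[a](L^s)| = |A/a|^r = p^{re \deg a}$. Equating the two expressions gives $\deg_\tau \Phi_a = re \deg a$ for every $a \notin \fc$.

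The main obstacle is extending this formula to $a \in \fc \setminus \{0\}$: here $\bG[a]$ is no longer étale and the rank hypothesis is silent at $\fc$. I plan to resolve this by passing to the $A$-motif $M(\bG) := \Hom_{\text{grp}/L}(\bG, \bG_a) \cong L\{\tau\}$, viewed as a left $L\{\tau\}$-module with a right $A$-action through $\Phi$. Right Euclidean division in $L\{\tau\}$ gives $\dim_L M(\bG)/L\{\tau\}\Phi_a = \deg_\tau \Phi_a$ for every $a \neq 0$. The Lang-isogeny argument of Proposition \ref{prop:rang}, applied at each maximal ideal $\ell \neq \fc$, shows that $M(\bG)$ is locally free of rank $r$ over $A \otimes_{\bF_p} L$, and the desired formula follows from $\dim_L (A \otimes_{\bF_p} L)^r / (a) = r \cdot \dim_{\bF_p}(A/a) = re \deg a$.
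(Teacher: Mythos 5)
Items \ref{item:1}--\ref{item:2} of your proposal are correct and essentially the paper's own argument (you prove \ref{item:2} first via the units of $L\{\tau\}$, then get \ref{item:1} by comparing coefficients in $\Phi_a\Phi_c=\Phi_c\Phi_a$, and \ref{item:2bis} from the tangent map), and your count of roots of the separable polynomial $\Phi_a(x)$ settles \ref{item:3} for $a\notin\fc$ exactly as in the paper. The problem is the case $a\in\fc\setminus\{0\}$, which you yourself flag and only sketch. The key claim of your sketch --- that $M(\bG)$ is locally free of rank $r$ over $A\otimes_{\bF_p}L$ --- is not established by ``applying the Lang-isogeny argument of Proposition \ref{prop:rang} at each $\ell\neq\fc$'': that proposition goes in the opposite direction (from local freeness of the motive to freeness of the Tate modules), and reversing it requires proving finite generation of $M(\bG)$ over $A\otimes_{\bF_p}L$, torsion-freeness, and the constancy (equal to $r$) of the rank on each factor of the idempotent decomposition \eqref{eq:idempotent}, as well as relating the point count $|\bG[\ell^n](L^s)|$ to $\dim_L M(\bG)/\ell^n_L M(\bG)$. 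This is precisely the content of Proposition \ref{prop:Gdrinfeld}, which comes later and whose proof in the paper \emph{uses} item \ref{item:3} of the present proposition; so you cannot invoke it, and filling your sketch honestly means reproving it from the separable case alone (doable, via Euclidean division by $\Phi_{a_1}$ for a non-constant $a_1\notin\fc$, but it is substantial work you have not carried out, not a corollary of Proposition \ref{prop:rang}).

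The paper closes this case with a one-line reduction that you missed: for $c\in\bF_q$ one has $\Phi_{a+c}(\tau)=\Phi_a(\tau)+c^{p^t}$ by \ref{item:2}, hence $\deg_\tau\Phi_{a+c}=\deg_\tau\Phi_a$, and $\deg(a+c)=\deg(a)$ since $\bF_q(a)=\bF_q(a+c)$; choosing $c$ with $\delta_\bG(a)+c^{p^t}\neq 0$ makes the constant term of $\Phi_{a+c}$ nonzero and reduces $a\in\fc$ to the separable case you already treated. I recommend replacing your motive-theoretic detour by this translation trick; as it stands, the proof of \ref{item:3} for $a\in\fc\setminus\{0\}$ has a genuine gap.
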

\medskip
\begin{proof}
Si l'on restreint l'action de $A$ à celle de $\bF_q$, cela induit sur $\bG$ une structure de schéma en $\bF_q$-espaces vectoriels connexe lisse pour laquelle les endomorphismes de $\varphi(A)$ sont $\bF_q$-linéaires. L'assertion \ref{item:1} est alors  conséquence directe de la relation $\Phi_a(\tau)\Phi_c(\tau)=\Phi_c(\tau)\Phi_a(\tau)$ pour tout $a\in A$ et $c\in \bF_q$. 

Le point \ref{item:2bis} découle de la commutativité du diagramme d'anneaux:
\begin{equation}
\begin{tikzcd}
\Phi^\square:~A \arrow[r,hook] \arrow[dr,"\delta_\bG"'] & \End_{\operatorname{grp}/L}(\bG) \arrow[r,"\square"] \arrow[d,"\Lie"] & \End_{\operatorname{grp}/L}(\bG_a) \arrow[r,"\sim"]\arrow[d,"\Lie"] & L\{\tau\}\arrow[d,"\partial"] \\
 & \End_{L\operatorname{-ev}/L}(\Lie_\bG) \arrow[r,"\Lie_\square"] & \End_{L\operatorname{-ev}/L}(\bG_a) \arrow[r,"\sim"] & L
\end{tikzcd}
\end{equation}%Le diagramme m'a l'air bien compliqué pour un argument bateau mais peut-être que je pinaille...
obtenu par fonctorialité de $\bG\mapsto \Lie_\bG$, où la flèche verticale $\partial:L\{\tau\}\to L$ envoie un polynôme $P(\tau)=a+b\tau+...$ vers son terme constant $a$.

Pour \ref{item:2}, il suffit de remarquer que $\delta_\bG(\bF_q^\times)\subset (L\{\tau\})^\times=L^\times$. Ainsi, par \ref{item:2bis}, $\Phi_a(\tau)=\delta_\bG(a)$ pour tout $a\in \bF_q$. Or, $\delta$ induit par restriction un morphisme de corps $\bF_q\to \bF_q\subset L$ et coïncide alors avec une puissance $t$ du $p$-Frobenius par Théorie de Galois des corps finis. 

Montrons \ref{item:3}. Soit $a\in A$ non nul de degré $d>0$. Par \ref{item:2}, on sait que pour tout $a\in \bF_q$, on a $\deg_\tau \Phi_a(\tau)=\deg_\tau \Phi_{a+c}(\tau)$ et $\deg(a+c)=\deg(a)$ (car $[K:\bF_q(a)]=[K:\bF_q(a+c)]$). Quitte à remplacer $a$ par $a+c$, on peut donc supposer que le coefficient constant de $\Phi_a(\tau)$ est non nul. Les solutions dans le corps $L^s$ de 
\begin{equation}\label{eq:phia=0}
\Phi_a(x)=a_0x+a_1x^q+...+a_sx^{q^s}=0 \quad (x\in L^s)
\end{equation}
coïncident avec les éléments de $\bG[a](L^s)$. D'une part, l'hypothèse $a_0\neq 0$ montre que l'équation \eqref{eq:phia=0} admet $q^s$ solutions. D'autre part, l'ensemble $\bG[a](L^s)$ a $q^{rd}$ éléments, et par conséquent $s=rd$. En effet, $\bG[a](L^s)$ est isomorphe à $\bG[\ell_1^{c_1}](L^s)\times \cdots \times \bG[\ell_s^{c_s}](L^s)$ et, par définition des modules élémentaires, a $q^{rc_1 \deg(\ell_1)}\cdots q^{rc_s\deg \ell_s}$ éléments. Cela découle donc de l'additivité des degrés. 
\end{proof}

La proposition suivante correspond l'énoncé du théorème \ref{thm:classification}, partie \ref{thm:classification-ii} pour les modules élémentaires de type $(1)$.
\medskip
\begin{prop}\label{prop:Gdrinfeld}
$\bG$ est un module de Drinfeld sur $L$ de coefficient $A$, de rang $r$, et de caractéristique $\delta_\bG$.
\end{prop}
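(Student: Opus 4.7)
L'approche consiste à transporter la structure de $A$-module de $\bG$ sur $\bG_a$ via l'isomorphisme de groupes algébriques fourni par la proposition \ref{prop:G-est-Ga}, puis à vérifier que les données obtenues satisfont les conditions définissant un module de Drinfeld. Fixons un isomorphisme $\square:\bG\xrightarrow{\sim}\bG_a$ et considérons le morphisme d'anneaux $\Phi=\Phi^{\square}:A\to L\{\tau\}$ défini en \eqref{eq:Phia}. Alors $\bG$ est isomorphe, en tant que schéma en $A$-modules, à $\bG_a$ muni de l'action $a\cdot x=\Phi_a(x)$, et l'on peut donc substituer $\bG$ par ce modèle.

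La proposition \ref{prop:propriete-Phia} fournit précisément les informations requises. De \ref{item:1}, le morphisme $\Phi$ se factorise par $L\{\tau^e\}\subset L\{\tau\}$, de sorte que chaque $\Phi_a$ est $\bF_q$-linéaire; de \ref{item:2bis}, le terme constant de $\Phi_a$ coïncide avec $\delta_\bG(a)$, et le morphisme caractéristique de $(\bG_a,\Phi)$ au sens de la définition \ref{def:morphisme-cara} est donc bien $\delta_\bG$; de \ref{item:3}, pour tout $a\in A$ non nul, $\deg_\tau\Phi_a=re\deg(a)$ avec coefficient dominant non nul. Ces propriétés sont exactement celles caractérisant classiquement un module de Drinfeld de rang $r$ et de caractéristique $\delta_\bG$ (cf. \cite[prop. 2.2]{drinfeld}).

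Il reste à passer de cette caractérisation à la définition adoptée dans la sous-section \ref{subsec:drinfeld}, à savoir que $M(\bG)$ est localement libre de rang $r$ sur $A\otimes_{\bF_p}L$. Via $\square$, on identifie $M(\bG)$ à $L\{\tau\}$, le corps $L$ agissant par multiplication scalaire à gauche et $a\in A$ par multiplication à droite par $\Phi_a$; cela munit $L\{\tau\}$ d'une structure de $A\otimes_{\bF_p}L$-module à gauche. Pour $a\in A$ non constant, le coefficient dominant de $\Phi_a$ étant une unité de $L$, la division euclidienne à droite dans $L\{\tau\}$ par $\Phi_a$ montre que $L\{\tau\}$ est un $L[\Phi_a]$-module à gauche libre de rang $re\deg(a)$; en particulier, il est de type fini sur $A\otimes_{\bF_p}L$. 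De plus, pour tout $a\in A$ non nul, le quotient $L\{\tau\}/L\{\tau\}\Phi_a$ est libre sur $L$ de dimension $re\deg(a)=r\dim_{\bF_p}(A/aA)=r\cdot\dim_L((A/aA)\otimes_{\bF_p}L)$; autrement dit, les fibres de $M(\bG)$ sur $\Spec(A\otimes_{\bF_p}L)$ sont toutes de rang $r$. Comme $L\{\tau\}$ est intègre (donc sans torsion sur tout sous-anneau), on en déduit la liberté locale de rang $r$.

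L'obstacle principal réside dans cette dernière étape: l'anneau $A\otimes_{\bF_p}L$ n'est en général ni intègre ni régulier lorsque $\bF_q\neq\bF_p$, puisque $\bF_q\otimes_{\bF_p}L$ se décompose selon l'action du groupe $\Gal(\bF_q/\bF_p)$. Le plus propre est de passer à $L^{\mathrm{perf}}$, où l'on obtient une décomposition
\[
A\otimes_{\bF_p}L^{\mathrm{perf}}\cong\prod_{\sigma\in\Gal(\bF_q/\bF_p)}A\otimes_{\bF_q,\sigma}L^{\mathrm{perf}},
\]
chaque facteur étant intègre de dimension $1$, donc Dedekind. La liberté locale se vérifie alors composante par composante à partir du calcul de rang de \ref{item:3}, puis descend à $L$ par platitude fidèle.
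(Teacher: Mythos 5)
Votre stratégie est pour l'essentiel celle du texte : transport de la structure par $\square$, utilisation de la proposition \ref{prop:propriete-Phia}, identification de $M(\bG)$ à $L\{\tau\}$ (où $L$ agit à gauche et $a\in A$ par multiplication à droite par $\Phi_a$), division euclidienne, puis décomposition de $A\otimes_{\bF_p}L$ en composantes qui sont des anneaux de Dedekind. Il reste cependant un vrai point à combler. Le raccourci de votre deuxième paragraphe n'est pas valable tel quel, et vous le signalez d'ailleurs vous-même : $A\otimes_{\bF_p}L$ n'est pas intègre dès que $e>1$, et surtout l'égalité $\dim_L\left(L\{\tau\}/L\{\tau\}\Phi_a\right)=r\cdot\dim_L\left((A/aA)\otimes_{\bF_p}L\right)$ ne fournit qu'une \emph{moyenne} : elle n'exclut nullement que les rangs diffèrent d'une composante (ou d'une fibre) à l'autre, alors que c'est précisément ce qu'il faut exclure pour obtenir la liberté locale de rang constant $r$. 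Votre dernier paragraphe, qui renvoie à une vérification \emph{composante par composante à partir du calcul de rang de \ref{item:3}}, reste muet sur ce point. L'ingrédient manquant est \ref{prop:propriete-Phia}.\ref{item:2} : puisque $c\in\bF_q$ agit par multiplication à droite par $c^{p^t}$, la graduation de $L\{\tau\}$ par le degré en $\tau$ modulo $e$ coïncide avec la décomposition idempotente, d'où l'identification $M(\bG)_{\bar{s}}\cong L\{\tau^e\}\tau^{s-t}$; la division euclidienne par $\Phi_a$ effectuée \emph{à l'intérieur de chaque composante} montre alors que $M(\bG)_{\bar{s}}$ est libre de rang $r\deg(a)$ sur $L[a]$, donc de type fini et sans torsion sur la composante de Dedekind correspondante, et finalement localement libre de rang exactement $r$ sur celle-ci. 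C'est l'argument du texte, et c'est ce qui rend votre esquisse complète.

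Par ailleurs, le passage à $L^{\mathrm{perf}}$ est superflu : $\delta_\bG$ plonge $\bF_q$ dans $L$, de sorte que $\bF_q\otimes_{\bF_p}L\cong L^{\oplus e}$ et que la décomposition \eqref{eq:idempotent} vaut déjà sur $L$; aucune descente fidèlement plate n'est nécessaire (la perfection de $L$ n'a rien à voir avec la séparabilité de $\bF_q/\bF_p$, qui est automatique). Ce détour n'est pas faux, mais il alourdit la preuve sans rien apporter; de même, l'appel à \cite[prop. 2.2]{drinfeld} n'est qu'une parenthèse puisque vous devez de toute façon établir la liberté locale de $M(\bG)$ pour la définition adoptée ici.
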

\medskip
\begin{proof}
Montrons que le module $M(\bG)=\Hom_{\operatorname{grp}/L}(\bG,\bG_a)$, muni de sa structure canonique de $A\otimes_{\bF_p}L$-module, et localement libre de rang $r$. Après un choix de coordonnées $\square$, $M(\bG)$ devient isomorphe à $L\{\tau\}$ où $l\in L$ agit sur $p(\tau)\in L\{\tau\}$ par $l\cdot p(\tau)$ et $a\in A$ par $p(\tau)\cdot \Phi_a(\tau)$.

En suivant la décomposition de $A\otimes_{\bF_p}L$ par idempotents,
\begin{equation}\label{eq:idempotent}
A\otimes_{\bF_p} L\cong \bigoplus_{s\in \bZ/e\bZ}A\otimes_{\bF_q,c\mapsto c^{p^s}}L,
\end{equation}
le module $M(\bG)\cong_{\square} L\{\tau\}$ se décompose uniquement comme somme directe de sous-modules $M(\bG)_s$, $s\in \bZ/e\bZ$, où  $M(\bG)_s$ est le sous-module des éléments $m\in M(\bG)$ sur lesquels $c\in \bF_q\subset A$ agit comme $c^{p^s}\in \bF_q\subset L$. Comme $c\in \bF_q\subset A$ agit par $p(\tau)\mapsto p(\tau)c^{p^t}$ (proposition \ref{prop:propriete-Phia}.\ref{item:2}), on trouve
\[
\text{Pour~}s\in \{t,...,t+e-1\}, \quad M(\bG)_{\bar{s}}\cong_{\square} L\{\tau^e\}\tau^{s-t}.
\]
Soit $a\in A$ non constant. Par division euclidienne par $\Phi_a(\tau)$ dans $L\{\tau^e\}\tau^{s-t}$, on en déduit que $M(\bG)_{\bar{s}}$ est un $\bF_q[a]\otimes_{\bF_q,x\mapsto x^{p^s}}L$-module libre de rang $r\deg(a)$, de base $(\tau^{s-t},\tau^{e+s-t},...,\tau^{(r\deg(a)-1)e+s-t})$ (proposition \ref{prop:propriete-Phia}.\ref{item:3}). Le module $M(\bG)_{\bar{s}}$ est donc de type fini et sans torsion sur l'anneau de Dedekind $A\otimes_{\bF_q,c\mapsto c^{p^s}}L$; il est en particulier localement libre de rang constant. Comme $A\otimes_{\bF_q,c\mapsto c^{p^s}}L$ est un $\bF_q[a]\otimes_{\bF_q,c\mapsto c^{p^s}}L$-module libre de rang $\deg(a)$, on en déduit que $M(\bG)_{\bar{s}}$ est de rang $r$ sur $A\otimes_{\bF_q,c\mapsto c^{p^s}}L$.

Le module $M(\bG)$ est donc localement libre de rang $r$ sur $A\otimes_{\bF_p}L$. On en déduit que $\bG$ est un module de Drinfeld. Qu'il soit de caractéristique $\delta_\bG$ découle de la définition. 
\end{proof}

\subsubsection{Preuve du théorème \ref{thm:classification} pour les modules élémentaires de type $(2)$}
On termine ici la preuve du théorème \ref{thm:classification} pour les modules élémentaires de type $(2)$. On suppose donc que $L$ une extension finie de $K$. Notons $\cO_L$ la clôture intégrale de $A$ dans $L$. Pour $\fP$ un idéal maximal de $\cO_L$, on note $\cO_{(\fP)}\subset L$ le sous-anneau de $L$ formé des éléments $\fP$-entiers:
\[
\cO_{(\fP)}:=\{x\in L~|~v_\fP(x)\geq 0\}
\]
où $v_\fP$ désigne une valuation sur $L$ associée à $\fP$. La notation $\cO_\fP$ reste réservée à la complétion de $\cO_L$ (resp. $\cO_{(\fP)}$) en l'idéal $\fP$, de corps résiduel $\bF_\fP$. On notera également $L_\fP$ le corps des fractions de $\cO_\fP$. Enfin, soit $L^s$ et $L_\fP^s$ des clôtures séparables de $L$ et $L_\fP$ respectivement, et soit $\bar{\cO}_\fP$ la clôture intégrale de $\cO_\fP$ dans $L_\fP^s$. \\

Supposons cette fois-ci que $\bG$ est un module élémentaire de type $(2)$ de dimension $1$, de coefficient $A$, de base $L$ et de rang $r$. Rappelons que cela impose les propriétés suivantes: il existe un ensemble fini $S$ d'idéaux maximaux de $\cO_L$ pour lequel, pour tout $\fP$ un idéal maximal de $\cO_L$ en dehors de $S$ et $\ell$ un idéal maximal de $A$ différent de $\fp:=\fP\cap A$,
\begin{enumerate}[label=$(\alph*)$]
    \item La représentation $\operatorname{T}_\ell \bG$ est non ramifiée en $\fP$; i.e. le groupe d'inertie $I_\fP\subset G_L$ en $\fP$ agit trivialement, et, 
    \item Le déterminant $s(\fP)\in A_\ell$ de l'action de $\Frob_\fP\in G_L/I_\fP$ sur $\operatorname{T}_\ell \bG$ appartient à $A$ et est indépendant de $\ell$.
\end{enumerate}

Soit $\delta=\delta_\bG:A\to L$ le morphisme caractéristique de $\bG$ (définition \ref{def:morphisme-cara}). Pour finir la preuve du théorème \ref{thm:classification}, il nous faut montrer que $\delta$ coïncide avec une puissance du Frobenius. Pour la suite, on ne perd rien à supposer que $r=1$: en effet, on sait par la proposition \ref{prop:Gdrinfeld} que $\bG$ est un module de Drinfeld et, quitte à remplacer $\bG$ par $\det \bG$ qui, par le théorème \ref{thm:det}, est un module de Drinfeld de rang $1$ de même caractéristique, on peut supposer $r=1$.

\subsubsection{Cœur de l'argument}
La preuve se déroule en deux étapes: dans un premier temps, on montre (proposition \ref{prop:psi-res-frob}) que $\delta:A\to L$ est $\fP$-entier pour presque toute place finie $\fP$ de $L$ puis que, pour tout $a\in A$ et tout tel $\fP$, il existe un entier $k=k_{a,\fP}$ pour lequel 
\[
\delta(a)\equiv a^{p^k} \pmod{\fP}.
\]
Dans un second temps, on montre un résultat général qui prouve qu'une telle condition sur $\delta$ suffit à en déduire que c'est une puissance du morphisme de Frobenius (théorème \ref{theo:frob-res}). 
\medskip
\begin{prop}\label{prop:psi-res-frob}
Pour presque tout idéal premier $\fP$ de $\cO_L$ et tout élément $a\in A$, il existe un entier positif $k$ (dépendant de $\fP$ et de $a$) tel que $\delta(a)\equiv a^{p^k}$ modulo $\fP$.
\end{prop}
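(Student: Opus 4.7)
Le plan est de combiner l'hypothèse de type $(2)$ avec la théorie de la bonne réduction des modules de Drinfeld de rang $1$. Quitte à passer au déterminant $\det \bG$ (théorème \ref{thm:det}), on peut supposer $r=1$; par la proposition \ref{prop:Gdrinfeld}, $\bG$ est alors un module de Drinfeld. Puisque $\delta(A) \subset L$ est un sous-anneau finiment engendré, il est contenu dans $\cO_{(\fP)}$ pour presque tout $\fP$; en dehors de cet ensemble fini, de $S$ et des premiers de mauvaise réduction, la réduction $\bar{\delta}_\fP : A \to \bF_\fP$ est bien définie et $\bG$ se réduit en un module de Drinfeld $\bar{\bG}$ sur $\bF_\fP$ de rang $1$ et de caractéristique $\bar{\delta}_\fP$.

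On exploite ensuite l'hypothèse de type $(2)$ pour identifier, dans $\End(\bar{\bG}) \hookrightarrow \bF_\fP\{\tau\}$, le Frobenius $\pi_\fP = \tau^{f_\fP}$ (où $|\bF_\fP| = p^{f_\fP}$) avec $\bar{\Phi}_{s(\fP)}$: les deux agissent par multiplication par $s(\fP) \in A$ sur $\operatorname{T}_\ell \bar{\bG} \cong \operatorname{T}_\ell \bG$ pour tout $\ell \neq \fp$ tel que $\operatorname{T}_\ell \bG$ soit de rang~$1$, et la représentation $\ell$-adique $\End(\bar{\bG}) \hookrightarrow \End_{A_\ell}(\operatorname{T}_\ell \bar{\bG})$ est fidèle. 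La pure inséparabilité de $\tau^{f_\fP}$ donne alors $\bar{\bG}[s(\fP)](\bar{\fP}) = 0$, et en décomposant $(s(\fP))$ en premiers de $A$ on en déduit $(s(\fP)) = \fp_0^n$ pour un entier $n \geq 1$, où $\fp_0 := \ker \bar{\delta}_\fP$. En particulier $s(\fP) \notin A^\times$.

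Le point crucial est alors de montrer $\fp_0 = \fp := A \cap \fP$. Supposons au contraire $\fp_0 \neq \fp$. Alors $\ell := \fp_0$ est un indice admissible dans l'hypothèse de type $(2)$, ce qui impose $\operatorname{T}_{\fp_0} \bG$ non ramifié en $\fP$ et de déterminant de Frobenius égal à $s(\fP)$. Deux cas se présentent. Soit $\fp_0 = \ker \delta$: alors $\operatorname{T}_{\fp_0} \bG = 0$ (argument classique de hauteur en rang $1$), le déterminant vaut $1$ et contredit $s(\fP) \notin A^\times$. Soit $\fp_0 \neq \ker \delta$: alors $\operatorname{T}_{\fp_0} \bG$ est libre de rang $1$ sur la fibre générique mais se ramifie en $\fP$, $\fp_0$ étant la caractéristique résiduelle de $\bar{\bG}$, ce qui contredit la non-ramification. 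On obtient donc $\fp_0 = \fp$; comme $\fp$ varie avec $\fP$ alors que $\ker \delta$ est fixé, on a de plus $\ker \delta = 0$.

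L'injection induite $\bar{\delta}_\fP : A/\fp \hookrightarrow \bF_\fP$ entre corps finis s'obtient alors en composant le plongement canonique $a + \fp \mapsto a + \fP$ par une puissance du Frobenius absolu. Il existe donc $k = k_\fP \geq 0$ tel que $\bar{\delta}_\fP(a) = (a \bmod \fp)^{p^k} = a^{p^k} \bmod \fP$ pour tout $a \in A$, soit $\delta(a) \equiv a^{p^k} \pmod{\fP}$. L'obstacle principal est la troisième étape: il faut justifier finement que $\operatorname{T}_{\fp_0} \bG$ est ramifié en $\fP$ lorsque $\fp_0$ est la caractéristique résiduelle d'une bonne réduction, ce qui repose sur la théorie de la réduction des modules de Drinfeld.
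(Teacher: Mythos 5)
Votre stratégie suit dans ses grandes lignes celle du texte : réduction au rang $1$ via $\det\bG$, réduction de $\bG$ modulo $\fP$ hors d'un ensemble fini (vos « premiers de mauvaise réduction » correspondent aux ensembles $S_T$ et $S_D$ du texte), identification du Frobenius $\tau^{f_\fP}$ de la réduction avec $\bar{\Phi}_{s(\fP)}$ — c'est exactement le contenu du point (3) du lemme \ref{lem:en-trois-points}, que le texte démontre à la main via la réduction des points de torsion (son point (2)), là où vous invoquez la théorie de la bonne réduction et la fidélité de $\End(\bar{\bG})\to\End_{A_\ell}(\operatorname{T}_\ell\bar{\bG})$ — puis conclusion par théorie de Galois des corps finis. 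Le trou se situe à l'étape que vous qualifiez vous-même de cruciale, $\fp_0=\fp$ : dans le cas $\fp_0\neq\ker\delta$, vous affirmez que $\operatorname{T}_{\fp_0}\bG$ est ramifié en $\fP$ dès que $\fp_0$ est la caractéristique résiduelle d'une bonne réduction. C'est vrai, mais c'est un énoncé non trivial (il requiert la théorie locale du module formel en $\fP$, i.e. un argument de polygone de Newton sur les tours de torsion de type Lubin--Tate), vous ne le démontrez pas et vous le signalez comme l'obstacle principal : en l'état, la preuve est incomplète précisément là où elle doit l'être. Deux points mineurs en sus : la conclusion « $n\geq 1$ » ne découle pas de $\bar{\bG}[s(\fP)](\bar{\bF}_\fP)=0$ seul (une unité a aussi une torsion triviale), mais de l'égalité $\bar{\Phi}_{s(\fP)}=\tau^{f_\fP}$ et d'une comparaison de degrés en $\tau$ ; et votre cas $\fp_0=\ker\delta$ repose sur la convention que le déterminant de l'action sur le module de Tate nul vaut $1$, lecture limite de la définition \ref{def:type2}.

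Or l'hypothèse (b) de type $(2)$ vous donne $\fp_0=\fp$ sans aucune théorie de la ramification, et c'est le mécanisme du texte : $\Frob_\fP$ agit par un automorphisme sur $\operatorname{T}_\ell\bG$, donc son déterminant $s(\fP)$ est une unité de $A_\ell$ pour tout $\ell\neq\fp$ (avec $\operatorname{T}_\ell\bG$ libre de rang $1$) ; comme vous avez déjà établi $v_{\fp_0}(s(\fP))=n\geq 1$, prendre $\ell=\fp_0$ donne immédiatement une contradiction si $\fp_0\neq\fp$. Le texte formule cela ainsi : $(s(\fP))$ est une puissance de $\fp$, non triviale parce que $v_\fP(\delta(s(\fP)))>0$ (point (3) du lemme \ref{lem:en-trois-points}), d'où $\delta(\fp)\subset\fP$ ; la congruence pour tout $a\in A$ s'obtient ensuite par le polynôme minimal de $\bar{a}$ sur $\bF_q$, alors que vous passez par les plongements de corps finis, ce qui est correct et donne même un exposant $k$ ne dépendant que de $\fP$ — mais tout cela repose sur l'étape manquante, qui doit être remplacée par l'argument de déterminant ci-dessus (ou complétée par la théorie locale que vous évoquez).
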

\medskip

La preuve de cette proposition résultera du lemme \ref{lem:en-trois-points} qui suit. \\

Soit $S_T$, "T" pour {\it total}, l'ensemble des idéaux maximaux $\fP$ de $\cO_L$ pour lesquels il existe $a\in A$ tel que l'un des coefficients de $\Phi_a(\tau)$ ne soit pas $\fP$-entier. Comme $A$ est finiment engendré en tant qu'anneau, $S_T$ est fini. En particulier, pour $\fP$ en dehors de $S_T$, l'image de $\delta$ est contenue dans $\cO_{(\fP)}$. \\

Pour $a\in A$ non nul, notons $\varepsilon(a)\in L^\times$ le coefficient dominant de $\Phi_a(\tau)$. On conviendra que $\varepsilon(0)=0$. Les relations de commutation des $\Phi_a(\tau)$ ainsi que l'assertion \ref{item:3} de la proposition précédente entraînent que:
\[
\forall a,b \in A, \quad \varepsilon(a)\varepsilon(b)^{q^{r\deg a}}\!=\varepsilon(b)\varepsilon(a)^{q^{r\deg b}}.
\]
En particulier, le support premier de $\varepsilon(a)$ (i.e. l'ensemble les idéaux maximaux $\fP$ de $\cO_L$ tels que $v_\fP(\varepsilon(a))\neq 0$) est indépendant de $a\neq 0$. Notons le  $S_D$, "D" pour {\it dominant}. \\

\'Etant donné un idéal maximal $\fP$ de $\cO_L$ en dehors de $S_T$, $\fa$ un idéal de $A$, on introduit le foncteur :
\[
\mathbf{Z}_\fP[\fa]:\mathbf{Alg}_{\cO_{(\fP)}}\longrightarrow \mathbf{Ens}
\]
qui à une $\cO_{(\fP)}$-algèbre $R$ assigne l'ensemble fini de $R$:
\[
\mathbf{Z}_{\fP}[\fa](R):=\{x\in R~|~\forall a\in \fa:~\Phi_a(\tau)(x)=\delta(a)x+...+\varepsilon(a)x^{q^{\deg a}}=0\}.
\]
Restreint à la catégorie des $L$-algèbres, $\mathbf{Z}_{\fP}[\fa](R)$ est équivalent au foncteur $R\mapsto \bG[\fa](R)$. 

\medskip
\begin{lemm}\label{lem:en-trois-points}
Soit $\fP$ un idéal maximal de $\cO_L$ en dehors de l'ensemble fini $S\cup S_T\cup S_D$ et soit $\ell$ un idéal maximal de $A$ tel que $\deg \ell>\deg \fP$. Alors,
\begin{enumerate}
\item\label{lem:psi(l)-premier-a-p} Si $\lambda\in A$ et $n\geq 1$ sont tels que $\ell^n=(\lambda)$, alors $v_\fP(\delta(\lambda))=0$.
\item\label{prop:torsion-reduction} Pour une infinité d'entiers $n\geq 1$, les flèches obtenues par naturalité:
\[
\mathbf{Z}_{\fP}[\ell^n](L^s)\longrightarrow \mathbf{Z}_{\fP}[\ell^{n}](L^s_{\fP})\longleftarrow \mathbf{Z}_{\fP}[\ell^{n}](\bar{\cO}_{\fP}) \longrightarrow \mathbf{Z}_{\fP}[\ell^n](\bar{\bF}_\fP)
\]
sont des bijections.
\item\label{lem:vp-sp>0} On a $\deg s(\fP)=\deg \fP$ et $v_\fP(\delta(s(\fP)))>0$.
\end{enumerate}
\end{lemm}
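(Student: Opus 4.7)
The plan is to exploit that, for $\fP\notin S_T\cup S_D$, the polynomials $\Phi_a(\tau)$ lie in $\cO_{(\fP)}\{\tau\}$ with leading coefficient $\varepsilon(a)$ a unit at $\fP$, so that reducing modulo $\fP$ yields a Drinfeld module $\tilde\bG$ over $\bF_\fP$ of rank $r=1$. For \ref{lem:psi(l)-premier-a-p}, the composition $A\xrightarrow{\delta}\cO_{(\fP)}\twoheadrightarrow\bF_\fP$ has kernel a prime $\tilde\fp\subset A$, namely the characteristic of $\tilde\bG$; were $\ell=\tilde\fp$, the injection $A/\tilde\fp\hookrightarrow\bF_\fP$ would force $\deg\ell\le\deg\fP$, contradicting the hypothesis, so $\ell\neq\tilde\fp$ and since $(\lambda)=\ell^n\not\subset\tilde\fp$, we get $\delta(\lambda)\notin\fP$, i.e.\ $v_\fP(\delta(\lambda))=0$.

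For \ref{prop:torsion-reduction}, I would restrict to those $n$ for which $\ell^n=(\lambda)$ is principal — an infinite set by finiteness of the ideal class group of $A$. For such $n$, $\mathbf{Z}_\fP[\ell^n]$ is the scheme of zeros of the single polynomial $\Phi_\lambda$, which is separable (its derivative is the unit $\delta(\lambda)$, by \ref{lem:psi(l)-premier-a-p}) with unit leading coefficient $\varepsilon(\lambda)$. The three bijections then follow routinely: separability places all $L_\fP^s$-roots of $\Phi_\lambda$ already in $L^s$; after rescaling by $\varepsilon(\lambda)^{-1}$, $\Phi_\lambda$ becomes monic with $\fP$-integral coefficients, confining its $L_\fP^s$-roots to $\bar{\cO}_\fP$; and $\tilde\Phi_\lambda$ remains separable over $\bar{\bF}_\fP$ (its constant term is still a unit), so Hensel's lemma lifts roots uniquely from $\bar{\bF}_\fP$ back to $\bar{\cO}_\fP$.

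For \ref{lem:vp-sp>0}, the crux is to identify the geometric Frobenius $\pi_\fP\colon x\mapsto x^{q^{\deg\fP}}$ — corresponding to $\tau^{e\deg\fP}\in\bF_\fP\{\tau\}$ — with the Drinfeld endomorphism $\tilde\Phi_{s(\fP)}$ of $\tilde\bG$. Passing to the limit in \ref{prop:torsion-reduction} produces an isomorphism $\operatorname{T}_\ell\bG\cong\operatorname{T}_\ell\tilde\bG$ of $A_\ell$-modules that, thanks to the unramifiedness at $\fP$, intertwines $\Frob_\fP\in G_L/I_\fP$ with $\pi_\fP$; by the type $(2)$ hypothesis $\Frob_\fP$ acts as multiplication by $s(\fP)\in A$ on $\operatorname{T}_\ell\bG$, so both $\pi_\fP$ and $\tilde\Phi_{s(\fP)}$ induce the same action on $\operatorname{T}_\ell\tilde\bG$. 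The injectivity of the Tate representation $\End_{\bF_\fP}(\tilde\bG)\hookrightarrow\End_{A_\ell}(\operatorname{T}_\ell\tilde\bG)$ — standard for Drinfeld modules over finite fields — then forces
\[
\tau^{e\deg\fP}\;=\;\tilde\Phi_{s(\fP)}(\tau)\quad\text{dans }\bF_\fP\{\tau\}.
\]
Comparing $\tau$-degrees gives $\deg s(\fP)=\deg\fP$ by Proposition \ref{prop:propriete-Phia}.\ref{item:3}; comparing constant terms gives $\delta(s(\fP))\in\fP$, i.e.\ $v_\fP(\delta(s(\fP)))>0$, by Proposition \ref{prop:propriete-Phia}.\ref{item:2bis}.

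The main obstacle I anticipate is the Galois-equivariance of the reduction identification $\operatorname{T}_\ell\bG\cong\operatorname{T}_\ell\tilde\bG$ and, crucially, the matching of $\Frob_\fP\in G_L/I_\fP$ on the source with the geometric Frobenius on the target — the type $(2)$ hypothesis lives purely on the generic fibre, while all numerical content of \ref{lem:vp-sp>0} is extracted by transporting this data through the reduction, so the compatibility must be spelled out explicitly.
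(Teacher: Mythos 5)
Your proposal is correct, and parts \ref{lem:psi(l)-premier-a-p} and \ref{prop:torsion-reduction} follow the paper's argument essentially verbatim (your use of Hensel's lemma for the unique lifting of the simple roots of $\Phi_\lambda$ is just a repackaging of the paper's remark that the irreducible factors over $\bar{\bF}_\fP$ come from those over $\bar{\cO}_\fP$). The difference is in \ref{lem:vp-sp>0}: you view the reduction $\tilde\bG$ as a rank-one Drinfeld module over $\bF_\fP$, work at a single $\ell$ with $n\to\infty$, and invoke the standard injectivity of $\End_{\bF_\fP}(\tilde\bG)\hookrightarrow\End_{A_\ell}(\operatorname{T}_\ell\tilde\bG)$ to get $\tilde\Phi_{s(\fP)}=\tau^{e\deg\fP}$, whereas the paper never names the reduced Drinfeld module: it lifts the pointwise congruence $\Phi_{s(\fP)}(\gamma)\equiv\gamma^{q^{\deg\fP}}$ on the $\ell^n$-torsion directly to a polynomial identity by letting $\deg\ell$ grow, which uses the $\ell$-independence of $s(\fP)$ from hypothesis $(b)$ and is, in effect, an inline elementary proof of the faithfulness you cite (an additive polynomial of bounded degree vanishing on $q^{n\deg\ell}$ points must vanish). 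So your route buys a cleaner appeal to known Drinfeld-module theory at a fixed $\ell$, at the cost of importing a citation; the paper's buys self-containedness at the cost of having to vary $\ell$. The equivariance issue you flag is genuine but routine and is treated just as briskly in the paper: after choosing an embedding $L^s\hookrightarrow L^s_\fP$, the maps in point \ref{prop:torsion-reduction} are natural in the coefficient ring, hence equivariant for the decomposition group acting through $\Gal(\bar{\bF}_\fP|\bF_\fP)$ on the right-hand terms, which is exactly the matching of $\Frob_\fP$ with $x\mapsto x^{q^{\deg\fP}}$ that you need; finally, as you note in your preamble, $\fP\notin S_D$ ensures $\varepsilon(s(\fP))$ stays a unit under reduction, which is what legitimises the $\tau$-degree comparison giving $\deg s(\fP)=\deg\fP$.
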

\medskip
\begin{proof}
Pour le point \ref{lem:psi(l)-premier-a-p}, le noyau de la composition $\delta_\fP:A\stackrel{\delta}{\to} \cO_\fP\to \bF_\fP$ définit un idéal maximal de $A$ de degré $\leq \deg \fP$. Si $v_\fP(\delta(\lambda))>0$, alors $\lambda\in \ker \delta_\fP$, soit $(\ker \delta_\fP)| \ell^c$ pour un certain $c>0$, puis $\ell=\ker \delta_\fP$ par primalité. Cela contredit notre hypothèse sur le degré de $\ell$. \\
Démontrons le point \ref{prop:torsion-reduction} lorsque $n$ est un multiple de $h$, le cardinal de $\operatorname{Cl}(A)$. Soit $\lambda\in A$ un générateur de l'idéal principal $\ell^h$. En vertu du point \ref{lem:psi(l)-premier-a-p}, le polynôme $\Phi_\lambda(\tau)$ s'écrit
\[
\Phi_\lambda(\tau)=\delta(\lambda)+(\lambda)_1\tau^e+...+\varepsilon(\lambda)\tau^{re\deg \lambda} \in \cO_{(\fP)}\{\tau\},
\]
avec $\delta(\lambda)$ et $\varepsilon(\lambda)$ inversible dans $\cO_{(\fP)}$. En particulier, le polynôme 
\[
\Phi_\lambda(\tau)(x)=\delta(\lambda)x+(\lambda)_1x^q+...+\varepsilon(\lambda)x^{q^{\deg \lambda}}
\]
est séparable -- que donc $\mathbf{Z}_{\fP}[\ell^n](L^s)= \mathbf{Z}_{\fP}[\ell^{n}](L^s_{\fP})$ -- et ses racines sont $\fP$-entières dans toute extension séparablement close de $L$. On en déduit $\mathbf{Z}_{\fP}[\ell^{n}](L^s_{\fP})=\mathbf{Z}_{\fP}[\ell^{n}](\bar{\cO}_{\fP})$. Pour la dernière égalité, il suffit d'observer que $\Phi_\lambda(\tau)(x)$ est à racines simples à la fois dans $\bar{\cO}_\fP$ et $\bar{\bF}_\fP$, et que les facteurs irréductibles de  $\Phi_\lambda(\tau)(x)$ sur $\bar{\bF}_\fP$ proviennent de ceux sur $\bar{\cO}_\fP$. \\
Enfin démontrons le point \ref{lem:vp-sp>0}. Soit $\ell\subset A$ un idéal maximal de degré supérieur à $\deg \fP$. Puisque $\bG$ est de rang $1$ et $\fP$ en dehors de $S$, l'automorphisme $\Frob_\fP$ agit sur $\operatorname{T}_\ell \bG$ par multiplication par $s(\fP)$. D'après le point \ref{prop:torsion-reduction}, cela signifie que $\Phi_{s(\fP)}(\tau)$ agit sur $\mathbf{Z}_\fP[\ell^n](\bar{\bF}_{\fP})$ par $x\mapsto x^{q^{\deg \fP}}$. Par conséquent, pour tout $\gamma\in \mathbf{Z}_\fP[\ell^n](\bar{\bF}_\fP)\subset \bar{\bF}_\fP$, nous avons la congruence:
\[
\Phi_{s(\fP)}(\tau)(\gamma)\equiv \gamma^{q^{\deg \fP}}
\]
dans $\bar{\bF}_\fP$. En prenant $\ell$ de degré assez grand, on observe que cette identité est vérifiée pour assez de $\gamma\in \bar{\bF}_\fP$ pour être relevée en une identité polynomiale:
\[
\Phi_{s(\fP)}(\tau)(X)\equiv X^{q^{\deg \fP}}.
\]
Puisque $\fP$ est choisi en dehors de $S_D$, $\varepsilon(s(\fP))$ est une unité de $\cO_\fP$ et on trouve $\deg(s(\fP))=\deg \fP$. Il vient également $v_\fP(\delta(s(\fP)))>0$.
\end{proof}

\begin{proof}[Preuve de la Proposition \ref{prop:psi-res-frob}]
Soit $\fP$ en dehors de $S\cup S_D\cup S_T$ (fini), et posons $\fp:=\fP\cap A$. On prétend que $\delta(\fp)\subset \fP$. Commençons par remarquer qu'il existe $l>0$ tel que $(s(\fP))=\fp^l$ en tant qu'idéaux de $A$ : par définition $s(\fP)$ est inversible dans $A_{\ell}$ pour tout $\ell$ différent de $\fp$. En particulier, l'idéal $(s(\fP))$ est une puissance de $\fp$. Si cette puissance était nulle, alors $s(\fP)$ serait une unité de $A$ ce qui est absurde d'après le point \ref{lem:vp-sp>0}. Ainsi, $\delta(s(\fP))=\delta(\fp)^l\subset \fP$ puis, par maximalité de $\fP$, $\delta(\fp)\subset \fP$.

Fixons $a\in A$, et notons $\bar{a}$ son image dans $\bF_\fp\subset \bF_\fP$. Soit $\pi(X)\in \bF_q[X]$ le polynôme minimal de $\bar{a}$ sur $\bF_q$. Comme $\pi(a)\in \fp$, on a $\delta(\pi(a))=\pi^{p^t}(\delta(a))\in \fP$ (cf proposition \ref{prop:propriete-Phia}). Mais comme $\bar{a}^{p^t}$ est aussi racine de $\pi^{p^t}$ modulo $\fP$, il existe $c=c_{\fP,a}\in \{0,...,\deg \pi\}$ tel que 
\begin{equation*}%\label{eq:congruence-k}
\delta(a)\equiv a^{p^tq^c} \pmod{\fP}
\end{equation*}
ce qui conclut.
\end{proof}

Il nous reste à prouver que la flèche $\delta$ est une puissance du Frobenius. Pour cela, travaillons dans un cadre  plus général et étudions les triplets $(B,C,f)$ qui vérifient les propriétés $(\operatorname{P}_i)$ suivantes :
\begin{enumerate}[label=$(\operatorname{P}_{\arabic*})$]
    \item\label{prFrobdim} {\it $B\subset C$ sont deux algèbres intègres de type fini sur $\bF_p$ de dimension $1$,}
    \item\label{prFrobcongruence} {\it $f :B\to C$ est un morphisme d'anneaux tel que pour tous les idéaux premiers $\fp$ de $C$, sauf un nombre fini, et tout $b\in B$, il existe $k\in \bN$ (qui peut dépendre de $\fp$ et de $b$), tel que }
\end{enumerate}
\begin{equation}
f(b)\equiv b^{p^k}\pmod{\fp}. \nonumber
\end{equation}
D'après la Proposition \ref{prop:psi-res-frob}, le triplet $(A,\cO_L[S^{-1}],\delta)$ vérifie les propriétés \ref{prFrobdim} et \ref{prFrobcongruence}, et on se ramène donc à prouver :
\medskip
\begin{theo}\label{theo:frob-res}
Soit $(B,C,f)$ vérifiant \ref{prFrobdim} et \ref{prFrobcongruence}. Alors $f$ est la puissance $k$ème du Frobenius où $k$ est un entier possiblement négatif\footnote{\label{foot:k-neg}Notons que dans le cas où $k$ est négatif, l'énoncé entraîne l'existence pour tout $b\in B$ d'une racine $p^{-k}$ dans $C$ (nécessairement unique par intégrité de $C$) qui est l'image de $b$ par la flèche $f$.}.
\end{theo}
\medskip
\begin{rema}
\begin{itemize}
\item L'hypothèse de finitude sur $\bF_p$ de $B$ et $C$ est nécessaire pour assurer que l'on ait assez d'idéaux premiers dans $C$. Par exemple, si $C$ est  local de dimension $1$, la condition \ref{prFrobcongruence} est vide et $f$ peut être n'importe quelle morphisme.
\item Dans le cas particulier qui nous intéresse, à savoir pour le triplet $(A,\cO_L[S^{-1}],\delta)$, s'il existe un élément de $A$ qui n'admet pas de racine $p$èmes dans $L$, cela force l'entier $k$ dans l'énoncé précédent à être positif. Puisque $\delta$ coïncide avec l'élévation à la puissance $p^t$ sur le corps $\bF_q$, on a de plus $k\equiv t\pmod e$.
\end{itemize}
\end{rema}
\medskip
La preuve, qui se déroulera sur les dernières pages de ce texte, revient essentiellement à prouver les deux résultats intermédiaires suivants :
\medskip
\begin{lemm}\label{lem:frob-res-cas-poly}
Soit $(B,C,f)$ vérifiant \ref{prFrobdim} et \ref{prFrobcongruence} où $B$ est une algèbre de polynôme $\bF_p[X]$ et $C=B[f(X)]$. Alors $f(X)=X^{p^k}$ pour un certain entier $k$ possiblement négatif.
\end{lemm}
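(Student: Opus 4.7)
Write $Y := f(X) \in C$, so $C = \bF_p[X, Y]$. The dimension hypothesis $\dim C = 1$ forces $Y$ to be algebraic over $\bF_p[X]$, and $C \cong \bF_p[X, T]/(g(X, T))$ for some irreducible polynomial $g$. Let $K := \Frac(C)$ and $K' := \bF_p(X) \subset K$. My plan is to prove the lemma in four steps: (i) apply Chebotarev's density theorem for function fields to force $K/K'$ to be purely inseparable; (ii) extract the explicit relation $Y^{p^m} = R(X) \in K'$ for some $m \geq 0$; (iii) apply Chebotarev a second time to $K'/\bF_p(R)$, together with a structural analysis of the resulting Möbius form of $R$, to conclude that $R(X) = X^{p^s}$ for some $s \geq 0$; (iv) deduce $f(X) = X^{p^{s-m}}$.

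For step (i), the hypothesis~\ref{prFrobcongruence} applied to $b = X$ says that for almost every closed point $\fp$ of $\Spec C$ there is $k = k_\fp \geq 0$ with $Y \equiv X^{p^k} \pmod{\fp}$; consequently the residue $y := Y \bmod \fp$ lies in the subfield $\bF_p(x) = \kappa(\fp \cap B) \subseteq \kappa(\fp)$, where $x := X \bmod \fp$. Hence the residue degree $[\kappa(\fp) : \kappa(\fp \cap B)]$ equals $1$ for all but finitely many $\fp$. If $K/K'$ had a nontrivial maximal separable subextension $K''/K'$, Chebotarev's density theorem applied to the Galois closure of $K''/K'$ would exhibit a positive density of primes of $K'$ not splitting completely in $K''$, contradicting the residue-degree-one condition (since residue degrees are multiplicative and equal $1$ in the purely inseparable tower $K/K''$). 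Hence $K/K'$ is purely inseparable of some degree $p^m$, and there is a minimal $m \geq 0$ with $Y^{p^m} \in K'$. Writing $Y^{p^m} = R(X) = P(X)/Q(X)$ with $P, Q \in \bF_p[X]$ coprime, the irreducibility of $g$ gives $g = c \cdot (Q(X) T^{p^m} - P(X))$ for a constant $c \in \bF_p^\times$.

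Raising the congruence to the $p^m$-th power yields $R(X) \equiv X^{p^{k+m}} \pmod{\fp}$, so at the corresponding prime $\pi = \fp \cap B$ we obtain $\pi \mid P(X) - X^{p^l} Q(X)$ for some $l \geq m$. This implies $\bF_p(R(x)) = \bF_p(x)$ in $\kappa(\fp)$, i.e.\ residue degree $1$ of $\pi$ over the corresponding prime of $\bF_p[R(X)]$ for almost every $\pi$. A second application of Chebotarev to the inclusion $\bF_p(R) \subset K'$ forces $K'/\bF_p(R)$ to be purely inseparable of degree $p^s := \max(\deg P, \deg Q)$, so $\bF_p(R) = \bF_p(X^{p^s})$ and $R$ is a Möbius transformation of $X^{p^s}$: $R(X) = (\alpha X^{p^s} + \beta)/(\gamma X^{p^s} + \delta)$ with $\alpha\delta - \beta\gamma \neq 0$. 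Plugging into the congruence, we obtain that for almost every $\pi$ there exists $l \geq m$ with $\pi \mid \gamma X^{p^{s+l}} + \delta X^{p^l} - \alpha X^{p^s} - \beta$. Bounding the number of roots in $\kappa(\fp) = \bF_{p^n}$ of these polynomials over $l \in [m, m+n)$ against the total count of closed points of degree $n$ (which grows like $p^n$), a counting argument forces $\gamma = \beta = 0$ and $\alpha = \delta$, yielding $R(X) = X^{p^s}$.

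From $Y^{p^m} = X^{p^s}$ in $C$, considered inside $\bar K$ where $(Y - X^{p^{s-m}})^{p^m} = 0$ if $s \geq m$ and $(Y^{p^{m-s}} - X)^{p^s} = 0$ if $s < m$, we conclude $f(X) = Y = X^{p^k}$ with $k = s - m \in \bZ$. The principal obstacle is the counting step in (iii) that eliminates non-identity Möbius $R_0$: the total degree of $\gamma X^{p^{s+l}} + \delta X^{p^l} - \alpha X^{p^s} - \beta$ summed over $l \in [m, m+n)$ must strictly undershoot the $p^n$ points to be covered, which is comfortable for $p \geq 3$ but becomes tight in characteristic $2$, where the argument must appeal to a more refined Artin-Schreier-type analysis of the admissible factorisations.
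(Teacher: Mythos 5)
Your overall route is genuinely different from the paper's, and its first two-thirds are sound. Deriving from \ref{prFrobcongruence} (with $b=X$) that almost every prime of $C$ has residue degree $1$ over $B$ (since $y=x^{p^k}\in\bF_p(x)$ and $\kappa(\fp)=\bF_p(x,y)$), and then invoking Chebotarev for function fields to force $K/\bF_p(X)$ to be purely inseparable, is correct — note only that you need to discard the finitely many ramified primes, and that passing from primes of $C$ to primes of $B$ uses the quasi-surjectivity of $\Spec C\to\Spec B$, i.e.\ the paper's Lemma \ref{lem:quasi-surj-morph-spec}, which you use implicitly. Step (ii) and the identification $\bF_p(R)=\bF_p(X^{p^s})$, hence $R$ a M\"obius transform of $X^{p^s}$, are also fine. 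By contrast, the paper never proves pure inseparability as such: it writes $P(X,Y)=Q(X,Y^{p^N})$ with $N$ maximal, splits $Q(x,Y)$ at well-chosen $x$, applies the rigidity Lemma \ref{lem:xn-res} to the constant term, and pins the exponents through uniqueness of base-$p$ expansions at a generator of $\bF_{p^m}^\times$ — a purely algebraic argument with no density input.

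The genuine gap is your final counting step eliminating nontrivial M\"obius $R$. As written it cannot work: the polynomials $\gamma X^{p^{s+l}}+\delta X^{p^l}-\alpha X^{p^s}-\beta$ have degree $p^{s+l}\gg p^n$, so their ``total degree summed over $l\in[m,m+n)$'' does not undershoot $p^n$; it exceeds it by an enormous margin. To get any bound you must first reduce modulo $X^{p^n}-X$ (exponents of $p$ taken mod $n$) and exploit that the root sets are affine $\bF_p$-subspaces, e.g.\ via $\gcd$-kernel estimates of Artin--Schreier type: $x^{p^l}-x^{p^s}=\beta$ in $\bF_{p^n}$ is equivalent to $x^{p^{\,l-s}}-x=\beta^{p^{-s}}$, whose solution set has size $0$ or $p^{\gcd(l-s,\,n)}$. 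With mere degree bounds, even after reduction the available roots are of order $p^n/(p-1)$ plus wraparound terms — already borderline at $p=3$ and outright insufficient at $p=2$, as you yourself concede by deferring to an unspecified ``more refined Artin--Schreier-type analysis''. Since the lemma is asserted for every $p$, the proof is incomplete. The cleanest repair is to notice that your intermediate statement — $R(x)=x^{p^{l_x}}$ for almost all $x\in\bar{\bF}_p$ — is precisely the hypothesis of the paper's Lemma \ref{lem:xn-res} (with $n_x=p^{l_x}$), whose Kummer-theoretic proof is characteristic-free and gives $R(X)=X^n$ at once; evaluating the congruence at a generator of $\bF_{p^r}^\times$ for large $r$ then forces $n=p^s$ with $s\geq 0$, and your step (iv) concludes. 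In other words, your Chebotarev steps are an attractive alternative to the paper's choice of $N$ and splitting analysis, but they do not spare you a rigidity input of the type of Lemma \ref{lem:xn-res} at the end, and the counting you propose in its place has a real hole.
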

\medskip
\begin{lemm}\label{lem:unic-k}
Soit $B\subset C$ deux  $\bF_p$-algèbres intègres, et $f:B\to C$ un morphisme d'anneaux tel que pour tout $b\in B$, il existe $k_b\in \bZ$ vérifiant $f(b)=b^{p^{k_b}}$. Alors $f$ est de la forme $\Frob^k$ pour un certain entier $k$ possiblement négatif.
\end{lemm}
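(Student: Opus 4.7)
The plan is to promote the pointwise exponents $k_b$ to a single global $k$ by leveraging the ring-hom structure of $f$. I will assume $B$ contains an element $b_0$ transcendental over $\bF_p$ (the case where $B\subseteq \overline{\bF}_p$ is handled separately via the Galois theory of finite fields, and in the intended application to Theorem \ref{theo:frob-res}, $B$ has Krull dimension $1$ and therefore does contain a transcendental element). Since $b_0$ is transcendental and $C$ is an integral domain, the exponent $k := k_{b_0}$ is uniquely determined by $f(b_0) = b_0^{p^k}$; the goal then reduces to showing $f(b) = b^{p^k}$ for every $b \in B$.

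The core tool I would introduce is the twisted map
\[
g \,:=\, \tilde{f} \circ \Frob^{-k} \,:\, K_B^{p^{-\infty}} \longrightarrow K_C^{p^{-\infty}},
\]
obtained by extending $f$ to fraction fields and uniquely to the perfections (using that $\tilde{f}$ commutes with Frobenius). By construction $g$ fixes $b_0$, hence pointwise fixes the subfield $\bF_p(b_0)$, and sends any $b \in B$ to $b^{p^{k_b - k}}$. For $b$ transcendental over $\bF_p$ and algebraic over $\bF_p(b_0)$ — which covers the whole content of the application since $B$ then has transcendence degree $1$ — the orbit $\{g^n(b)\}_{n\geq 0} = \{b^{p^{n(k_b - k)}}\}_{n\geq 0}$ is contained in the finite set of $\bF_p(b_0)$-conjugates of $b$. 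Injectivity of $g$ forces $g^N(b) = b$ for some $N \geq 1$, i.e.\ $b^{p^{N(k_b - k)} - 1} = 1$; since $b$ is transcendental over $\bF_p$ it is not a root of unity, so $N(k_b - k) = 0$ and therefore $k_b = k$.

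For the remaining cases — $b \in B$ algebraic over $\bF_p$, or $b$ transcendental over $\bF_p$ but algebraically independent of $b_0$ — I would compare $f(b + b_0) = f(b) + f(b_0)$ with the characteristic-$p$ binomial identity $(b + b_0)^{p^M} = b^{p^M} + b_0^{p^M}$, obtaining, after raising to a sufficient Frobenius power to make all exponents non-negative, an identity of the form
\[
b_0^{p^{k+n}} - b_0^{p^{k_{b+b_0} + n}} \,=\, b^{p^{k_{b+b_0} + n}} - b^{p^{k_b + n}}.
\]
The left-hand side is an $\bF_p$-polynomial in $b_0$, while the right-hand side is either algebraic over $\bF_p$ or polynomial in an $\bF_p$-algebraically-independent variable; in either case both sides must vanish, yielding $k_{b+b_0} = k$ and $f(b) = b^{p^k}$. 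The main obstacle I anticipate is verifying that $g$ genuinely permutes a finite set of conjugates in the possibly inseparable extension $\bF_p(b_0)(b)/\bF_p(b_0)$; passing everywhere to perfections and working with the minimal polynomial of the separable part of $b$ over $\bF_p(b_0)$ should dispatch this cleanly without harming the count of conjugates.
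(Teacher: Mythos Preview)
Your argument is correct and takes a genuinely different route from the paper. The paper fixes two elements $x,y\in B$ and distinguishes three cases: (1) both algebraic over $\bF_p$, handled by finite-field Galois theory; (2) $y$ transcendental over $\bF_p(x)$, handled by comparing $f(xy)=f(x)f(y)$; and (3) both transcendental but $y$ algebraic over $\bF_p(x)$, handled by reducing modulo maximal ideals $\fm\subset\bF_p[x,y]$, using that $\bar f$ is a power of Frobenius on each residue field $\kappa_\fm$, and then showing (via Lemma~\ref{lem:quasi-surj-morph-spec}) that one can choose $\fm$ with $\gcd(n_{\bar x},n_{\bar y})$ arbitrarily large. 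Your orbit argument replaces this arithmetic case~(3) entirely: once $g=\tilde f\circ\Frob^{-k}$ fixes $\bF_p(b_0)$, each $g^n(b)=b^{p^{n(k_b-k)}}$ is a root of the minimal polynomial of $b$ over $\bF_p(b_0)$, hence the orbit is finite, and transcendence of $b$ forces $k_b=k$. This is cleaner and avoids both the residue-field bookkeeping and the auxiliary Lemma~\ref{lem:quasi-surj-morph-spec}. For the remaining cases you use additivity $f(b+b_0)=f(b)+f(b_0)$ where the paper uses multiplicativity; both work, and yours handles the case $b\in\overline{\bF}_p$ uniformly. Two small remarks: your worry about inseparability is unnecessary, since all you use is that the minimal polynomial has finitely many roots in any field; and your aside that ``$B\subseteq\overline{\bF}_p$ is handled via Galois theory of finite fields'' is only valid when $B$ is finite --- for $B$ an infinite subfield of $\overline{\bF}_p$ the lemma is actually false (take $f\in\widehat{\bZ}\setminus\bZ$), so the existence of a transcendental $b_0$ is not just convenient but essential. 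The paper's own case~(1) has the same lacuna, and as you note it is irrelevant to the application.
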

\medskip
\begin{rema}
Notons que dans l'énoncé du Lemme \ref{lem:unic-k}, nous n'avons pas supposé que le triplet $(B,C,f)$ vérifie la propriété \ref{prFrobdim}: il n'est pas nécessaire de supposer $B$ ou $C$ de type fini sur $\bF_p$ ou de dimension $1$. %ou encore d'imposer à $f$ les congruences \eqref{eq:congruence-f} \quentin{C'est automatique, non ?}. en effet ;)
\end{rema}
\medskip
Terminons d'abord la preuve du Théorème \ref{theo:frob-res} en admettant les deux lemmes.
\begin{proof}[Démonstration du Théorème \ref{theo:frob-res}]
D'après le Lemme \ref{lem:unic-k}, il suffit de montrer que $f(b)$ est de la forme $b^{p^{k_b}}$ pour tout élément $b\in B$, où $k_b\in\bZ$. La flèche $f$ préserve les constantes (i.e. les éléments algébriques sur $\bF_p$) et se restreint donc en un endomorphisme de $\bF_q$ qui est une puissance du Frobenius par théorie de Galois des corps finis. Lorsque $b$ n'est pas une constante, il suffit de montrer que le triplet $(\bF_p[b],\bF_p[b,f(b)],f|_{\bF_p[b]})$ vérifie encore les propriétés \ref{prFrobdim} et \ref{prFrobcongruence} en vertu du Lemme \ref{lem:frob-res-cas-poly}. 

Les algèbres $\bF_p[b]$ et $\bF_p[b,f(b)]$ sont de type fini sur $\bF$ par construction, ainsi qu'intègres en tant que sous algèbres de $C$. $\bF_p[b]$ est de dimension $1$, et il en est de même pour $\bF_p[b,f(b)]$: en effet, on a la chaîne d'inclusions $\bF_p(b)\subset\bF_p(b,f(b))\subset \Frac C$ qui montre que $\bF_p(b,f(b))$ est une extension finie de $\bF_p(b)$. 

Il reste à montrer que la condition \ref{prFrobcongruence} est vérifiée. Par le Lemme \ref{lem:quasi-surj-morph-spec} (énoncé plus bas), tous les idéaux premiers $\fp$ de $\bF_p[b,f(b)]$ excepté un nombre fini sont de la forme $\fP\cap \bF_p[b,f(b)]$ avec un idéal premier $\fP$ de $C$ et vérifie \ref{prFrobcongruence} pour $(B,C,f)$. Ainsi, $f(b)\equiv b^{p^k} \pmod{\fP}$, puis $f(b)-b^{p^k}\in \fP\cap \bF_p[b,f(b)]=\fp$. On en déduit \ref{prFrobcongruence} pour $(\bF_p[b],\bF_p[b,f(b)],f|_{\bF_p[b]})$, ce qui achève la preuve.
\end{proof}

Il nous reste à démontrer les Lemmes \ref{lem:frob-res-cas-poly}, \ref{lem:unic-k}, ainsi que le Lemme \ref{lem:quasi-surj-morph-spec} énoncé plus bas.

\begin{proof}[Démonstration du lemme \ref{lem:frob-res-cas-poly}]
Soit $(B,C,f)$ comme dans l'énoncé. Soit $P(X,Y)\in \bF_p[X,Y]$ un polynôme à deux variables annulateur de $f(X)$ sur $\bF_p(X)$ que l'on suppose de contenu\footnote{On rappelle que le contenu (vu comme un polynôme en $Y$) de $P(X,Y)=\sum_i P_i(X)Y^i$ est le plus grand commun diviseur des $P_i$.} égal à $1$ vu comme un polynôme en $Y$. Le noyau de  la flèche
\[
\bF_p[X,Y]\longrightarrow \bF_p[X,f(X)], \quad Y\mapsto f(X)
\]
est $P(X,Y)\bF_p(X)\cap \bF_p[X,Y]$ qui vaut $P(X,Y)\bF_p[X,Y]$ par hypothèse sur le contenu. Ainsi,
\begin{equation}\label{eq:isoC}
C\cong\bF_p[X,Y]/(P).
\end{equation}

L'identité \eqref{eq:isoC} et la propriété \ref{prFrobcongruence} entraîne l'assertion suivante :

\begin{enumerate}[label=$(A)$]
    \item\label{item:root} Pour tout $x\in \bar{\bF}_p$ sauf un nombre fini, les racines de $P(x,Y)\in \bar{\bF}_p[Y]$ sont toutes de la forme $x^{p^k}$ pour un certain entier $k\in \bZ$;
\end{enumerate}

Il nous reste donc à montrer que  \ref{item:root} entraîne : 
\begin{enumerate}[label=$(B)$]
    \item Il existe un entier $k$ tel que soit $P(X,Y)=X^{p^k}-Y$, soit $P(X,Y)=Y^{p^k}-X$.
\end{enumerate}
Le résultat suivant est une étape cruciale de la preuve:
\medskip
\begin{lemm}\label{lem:xn-res}
Soit $R(X)\in\bF_p(X)$ tel que, pour presque tout $x\in \bar{\bF}_p$, il existe un entier $n_x$  pour lequel $R(x)=x^{n_x}$. Alors il existe  $n\in \bZ$ pour lequel $R(X)=X^n$.
\end{lemm}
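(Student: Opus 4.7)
The strategy is to exploit the hypothesis at roots of unity of prime order and extract a global polynomial identity. Write $R(X)=P(X)/Q(X)$ with $P,Q\in\bF_p[X]$ coprime, and set $d=\max(\deg P,\deg Q)$. Suppose for contradiction that $R(X)\neq X^n$ for all $n\in\bZ$; then for each $n$, the polynomial $P(X)-X^n Q(X)$ (suitably interpreted by $X^{|n|}P-Q$ if $n<0$) is nonzero of degree at most $|n|+d$.

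First, for each prime $\ell\neq p$ large enough that all $\ell-1$ primitive $\ell$-th roots of unity lie outside the finite exceptional set and are not zeros of $Q$, the hypothesis applied at each such $\zeta$ yields $R(\zeta)\in\langle\zeta\rangle=\mu_\ell$, equivalently $R(\zeta)^\ell=1$. Since this holds for every primitive $\ell$-th root of unity, the cyclotomic polynomial $\Phi_\ell(X)$ divides $P(X)^\ell-Q(X)^\ell$ in $\bF_p[X]$ for cofinitely many primes $\ell$.

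Next I would analyze the Galois structure. Let $m_\ell$ denote the order of $p$ modulo $\ell$; then $\Phi_\ell$ splits in $\bF_p[X]$ into $(\ell-1)/m_\ell$ irreducible factors of degree $m_\ell$, one per Galois orbit of primitive $\ell$-th roots. Galois-equivariance of $R$ forces the exponent $n(\zeta)\in\bZ/\ell$ defined by $R(\zeta)=\zeta^{n(\zeta)}$ to depend only on the orbit $O$ containing $\zeta$. Each irreducible factor $f_O$ of $\Phi_\ell$ must then divide $P(X)-X^{n(O)}Q(X)$ in $\bF_p[X]$; choosing the balanced representative $n(O)\in(-\ell/2,\ell/2]$, the degree bound gives $m_\ell\leq|n(O)|+d$. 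Since only finitely many primes $\ell$ divide $p^k-1$ for $k\leq d$, we have $m_\ell>d$ for cofinitely many $\ell$, forcing $|n(O)|\geq m_\ell-d>0$ and in particular $n(O)\neq 0$.

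The hard step is extracting a single $n\in\bZ$ realized as $n(O)$ for infinitely many $(\ell,O)$: such an $n$ would force $P(X)-X^nQ(X)$ to vanish identically (since it would be a fixed polynomial of bounded degree divisible by infinitely many distinct irreducibles $f_O$), giving $R(X)=X^n$ and a contradiction. I expect this to be the main obstacle, and the natural approach is to compare exponents across primes via the Chinese Remainder Theorem applied to $\mu_{\ell_1\ell_2}=\mu_{\ell_1}\times\mu_{\ell_2}$ for distinct large primes $\ell_1,\ell_2$: the hypothesis at a primitive $\ell_1\ell_2$-th root $\zeta=\zeta_{\ell_1}\zeta_{\ell_2}$ reads $R(\zeta)=(\zeta_{\ell_1}\zeta_{\ell_2})^n$, and comparing components in $\mu_{\ell_i}$ forces $n(O_{\ell_1})$ and $n(O_{\ell_2})$ to be congruent modulo compatible integers. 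Iterating this compatibility over infinitely many prime pairs should lift the orbit exponents to a single integer $n$, producing the required polynomial identity. An alternative route uses the observation that $|n(O)|\leq\ell/2$, so combined with $|n(O)|\geq m_\ell-d$ one gets a contradiction whenever $m_\ell>\ell/2+d$, a condition that holds infinitely often.

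Finally, once $R(X)=cX^n$ with $c\in\bF_p^\times$ is established, the constant $c$ must be $1$: if $c$ has order $r>1$ in $\bF_p^\times$, then for any $x\in\bar{\bF}_p^\times$ whose order is coprime to $r$ (an infinite set, for instance all primitive $\ell$-th roots of unity with $\ell$ prime and $\ell\neq r$), we have $c\notin\langle x\rangle$ and thus $R(x)=cx^n\notin\langle x\rangle$, contradicting the hypothesis at infinitely many $x$. Hence $c=1$ and $R(X)=X^n$.
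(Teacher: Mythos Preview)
Your setup is sound: the Galois-equivariance of $R$ does force the exponent $n(\zeta)\in\bZ/\ell$ to be constant on each Frobenius orbit $O$, and the irreducible factor $f_O$ of $\Phi_\ell$ does divide $P(X)-X^{n(O)}Q(X)$ (or $X^{|n(O)|}P(X)-Q(X)$), giving $m_\ell\le |n(O)|+d$. The problem is that neither of your two proposed resolutions of the ``hard step'' works as stated.

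Your alternative route asks for infinitely many primes $\ell$ with $m_\ell>\ell/2+d$. But $m_\ell$ divides $\ell-1$, so $m_\ell>\ell/2$ forces $m_\ell=\ell-1$, i.e.\ $p$ is a primitive root modulo $\ell$. The infinitude of such $\ell$ is precisely Artin's primitive root conjecture, which is open. Your CRT route does not link the orbit exponents either: the hypothesis at $\zeta=\zeta_{\ell_1}\zeta_{\ell_2}$ gives $R(\zeta)=\zeta^{n_\zeta}$ for some $n_\zeta$, hence $\mu_{\ell_i}$-components $\zeta_{\ell_i}^{n_\zeta}$; but the integers $n(O_{\ell_i})$ were defined through $R(\zeta_{\ell_i})$, and since $R$ is not multiplicative there is no relation between $R(\zeta_{\ell_1}\zeta_{\ell_2})$ and $R(\zeta_{\ell_1}),R(\zeta_{\ell_2})$. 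So the compatibility you invoke is not there.

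The paper's proof sidesteps exactly this difficulty by evaluating $R$ not at $\ell$-th roots of unity but at an $\ell$-th root $x$ of a carefully chosen $\ell$-power root of unity $\zeta_\ell$ lying in a finite field $\bF$. Kummer theory over $\bF$ then gives $[\bF(x):\bF]=\ell$ directly (no dependence on $m_\ell$), so the minimal polynomial $X^\ell-\zeta_\ell$ has degree $\ell>2\max(\deg R_1,\deg R_2)$ and the congruence $R_1(X)\equiv \zeta_\ell^{r_x}X^{s_x}R_2(X)\pmod{X^\ell-\zeta_\ell}$ becomes an equality by degree comparison. This immediately yields $R(X)=\zeta_\ell^{r_x}X^{s_x}$, and a second prime $\ell_2$ coprime to $\ell_1$ kills the constant. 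The key idea you are missing is to manufacture a single irreducible of controllably large degree over which to test $R$, rather than relying on the uncontrolled sizes $m_\ell$ of Frobenius orbits in $\mu_\ell$.
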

\medskip
\begin{proof}
\'Ecrivons $R(X)=R_1(X)/R_2(X)$ pour deux polynômes $R_1(X)$ et $R_2(X)\neq 0$. Soit $\bF$ une extension finie de $\bF_p$ telle que $|\bF|-1$ admet deux diviseurs premiers impairs distincts $\ell_1,\ell_2>2\max (\deg R_1,\deg R_2)$. Pour $\ell\in\{\ell_1,\ell_2\}$, prenons $\zeta_\ell\in \bF\setminus (\bF)^{\ell}$ tel que $\zeta_\ell^{\ell^m}=1$ pour un certain $m$. Soit $x$ une racine $\ell$ème de $\zeta_\ell$ dans $\bar{\bF}$. Quitte \`a agrandir $\bF$, on peut supposer que $R(x)=x^{n_x}$ pour un certain entier $n_x\geq 0$. Comme $X^{\ell}-1$ est scindé à racines simples dans $\bF$ par hypothèse,  $X^\ell-\zeta_\ell$ est le polynôme minimal de $x$ sur $\bF$ et le corps $\bF(x)$ est de degré $\ell$ sur $\bF$ par théorie de Kummer. En écrivant  $n_x=s_x+r_x\ell$ avec $-\ell/2<s_x<\ell/2$, on a $R(x)=\zeta_\ell^{r_x} x^{s_x}$. Quitte à échanger $R_1$ et $R_2$, on peut supposer $0\leq s_x<\ell/2$. Or l'écriture $\zeta_\ell^rx^s\in \bF(x)$ avec $0\le r<\operatorname{ord} \zeta_\ell$ et $0\leq s<\ell/2$ est unique en le couple $(r,s)$ d'où la congruence de polynômes $R_1(X)\equiv \zeta_\ell^{r_x}X^{s_x}R_2(X)\pmod{X^\ell-\zeta_\ell}$, puis $R_1(X)= \zeta_\ell^{r_x}X^{s_x}R_2(X)$ et $R(X)=\zeta_\ell^{r_x}X^{s_x}$ en comparant les degrés. En particulier, $s_x=\deg R$ et $\zeta_\ell^{r_x}$ est indépendant de $\ell\in\{\ell_1,\ell_2\}$. Comme $\zeta_\ell^{r_x}$ est a la fois une racine $\ell_1^m$ et $\ell_2^m$ pour m assez grand, on a $\zeta_\ell^{r_x}=1$ ce qui conclut.
\end{proof}

Écrivons le polynôme $P$ sous la forme $P(X,Y)=Q(X,Y^{p^N})$ avec $N$ maximal pour cette propriété. On observe que la propriété \ref{item:root} pour $P$ entraîne \ref{item:root} pour $Q$. Déterminons $Q$ pour retrouver $P$ ensuite.

Par choix de $N$, nous avons $\partial_Y Q\neq 0$. De plus, $Q$ est irréductible dans $\bF_p(X)[Y]$ car $P$ l'est et on peut trouver par Bézout $S_1(X,Y), S_2(X,Y)\in \bF_p(X)[Y]$ tel que :
\[S_1(X,Y)Q(X,Y)+S_2(X,Y)(\partial_Y Q)(X,Y)=1.\]  On en déduit que, pour presque tout $x\in \bar{\bF}_p$ (plus précisément lorsque $x$ n'est  un pôle d'aucun terme de $S_1(X,Y), S_2(X,Y)$), $Q(x,Y)$ et $(\partial_Y Q)(x,Y)$ sont premiers entre eux et le polynôme $Q(x,Y)$ vu dans $\bar{\bF}_p[Y]$ est scindé à racines simples. Pour de tels $x$, on a alors 
\[
Q(x,Y)=c(x)\prod_{i=1}^d{\left(Y-x^{p^{n_{i,x}}}\right)}
\]
dans $\bar{\bF}_p[Y]$, où $d$ le degré de $Q$ en $Y$ et où $c(X)\in\bF_p[X]$ est le coefficient dominant de $Q(X,Y)$ (resp. $P(X,Y)$) vu comme polynôme en $Y$.  Le terme constant de $(-1)^{\deg_Y Q}Q(x,Y)/c(x)$ est $x^n$ où $n\in\bZ$  est indépendant de $x$ d'après le lemme \ref{lem:xn-res}. \\

Soit $m>d+|n|$ un entier. Quitte à choisir $m$ assez grand, on peut supposer que le groupe cyclique $\bF_{p^m}^\times$ admet un générateur $x$ pour lequel $Q(x,Y)\in \bar{\bF}_p[Y]$ est scindé à racines simples  $x^{p^{k_1}},\dots,x^{p^{k_d}}$ avec  $0\leq k_1<k_2<\cdots <k_d< m$. On obtient la congruence
\[
n\equiv\sum_{i=1}^d{p^{k_i}}\pmod{p^m-1}.
\]
De plus, 
\[
\left|n-\sum_{i=1}^{d}{p^{k_i}}\right|\leq|n|+\sum_{i=1}^{d}{p^{k_i}}<\sum_{k=0}^m {p^{k}}\le p^m-1,
\] car $d+|n|<m$.
Ainsi, $n=\sum_i {p^{k_i}}$.

Par unicité de la l'écriture en base $p$, on voit que les $(k_i)_i$ ne dépendent pas du choix\footnote{Ici, on utilise de manière cruciale le fait que les $k_i$ sont distincts. Par exemple, l'égalité $p^2=p+\dots +p$ fournit un contre-exemple lorsque certains exposants sont répétés.} de l'élément $x$ vérifiant les  contraintes  précédentes. En particulier, puisque les polynômes $Q(X,Y)$ et $c(X)\prod_i (Y-X^{p^{k_i}})$ en $X$ sur $\bF_p[Y]$ coïncident pour une infinité de valeurs de $X$ dans $\bar{\bF}_p$, ces polynômes sont égaux. Par irréductibilité de $Q$ et comme $\partial_YQ\neq 0$, on obtient $c(X)=1$ et $d=1$, d'où $Q(X,Y)=Y-X^{p^{k_1}}$ puis $P(X,Y)=Y^{p^N}-X^{p^{k_1}}$. Enfin, l'irréductibilité de $P$ impose $N=1$ ou $k_1=1$, ce qui est équivalent à
\[
P(X,Y)=X^{p^k}-Y\ {\rm ou}\ P(X,Y)=Y^{p^k}-X
\]
et la preuve est terminée !
\end{proof}

\begin{proof}[Démonstration de \ref{lem:unic-k}]
Soient $x,y\in B$, on veut montrer que\footnote{Ici, nous avons réalisé un léger abus car les entiers ne sont pas forcément uniques. Nous entendons plutôt avoir les égalités $f(x)=x^{p^{k_x}}=x^{p^{k_y}}$ (idem pour $f(y)$} $k_x=k_y$. On raisonne alors sur la sous-$\bF_p$-algèbre $\bF_p[x,y]$ de $B$ engendrée par $x$ et $y$ où la restriction de $f$ vérifie encore la propriété de l'énoncé. De même, il suffit de prouver le résultat lorsque l'on remplace $f$ par $f\circ\Frob^m$ avec $m$ assez grand et on peut donc supposer $k_x,k_y\ge 0$ et  $f$ est alors un endomorphisme de $\bF_p[x,y]$. On distingue les trois cas suivants qui sont exhaustifs par intégrité de $\bF_p[x,y]$:
\begin{enumerate}
\item Les éléments $x$ et $y$ sont algébriques sur $\bF_p$. Alors $\bF_p[x,y]$ est un corps fini et $f$ est une puissance du Frobenius par théorie de Galois sur $\bF_p$. 

\item Si  $y$ est transcendant sur $\bF_p(x)$, écrivons
\[
x^{p^{k_{xy}}}y^{p^{k_{xy}}}=f(xy)=f(x)f(y)=x^{p^{k_{x}}}y^{p^{k_{y}}}.
\]
d'où $k_y=k_{xy}$ et $x^{p^{k_{x}}}=x^{p^{k_{xy}}}=x^{p^{k_y}}$
%et $k_x=k_{xy}$.  $k_x=k_y$.
%\item S. Alors $B$ est une algèbre de polynômes en une variable sur le corps fini $\bF_p(x)$. D'où $f(x)=x^{p^{k_y}}$.
\item Si  $x$ et $y$ sont transcendants sur $\bF_p$ et mais $y$ est algébrique sur $\bF_p(x)$.

Pour tout idéal $\fb$ de $B$, on a $f(\fb)\subset \fb$ par hypothèse. Ainsi, pour tout idéal maximal $\fm\subset B$ de corps résiduel $\kappa_\fm$, $f$ induit un endomorphisme $\bar{f}$ de $\kappa_\fm$. Notons $d_\fm$ le degré de $\fm$, i.e. la dimension de $\kappa_\fm$ sur $\bF_p$, ainsi que $n_{\bar{x}}$ et $n_{\bar{y}}$ les dimensions sur $\bF_p$ des sous-corps de $\kappa_\fm$ engendrés respectivement par $\bar{x}$ et $\bar{y}$. Puisque $\kappa_\fm=\bF_p[\bar{x},\bar{y}]$, on a $d_\fm=[n_{\bar{x}},n_{\bar{y}}]$.

Puisque $\kappa_\fm$ est un corps fini et $\bar{f}$ un endomorphisme de $\kappa_\fm$, il existe un entier $0\leq k_\fm<d_\fm$ tel que $\bar{f}(\bar{b})=\bar{b}^{k_\fm}$ pour tout $b\in B$. En particulier, $k_x\equiv k_\fm \pmod{n_{\bar{x}}}$ et $k_y\equiv k_\fm\pmod{n_{\bar{y}}}$. On obtient $k_x\equiv k_y\pmod{(n_{\bar{x}},n_{\bar{y}})}$. Pour conclure, il suffit de montrer que le pgcd $(n_{\bar{x}},n_{\bar{y}})$ peut être choisi arbitrairement grand (en faisant varier $\fm$).

Soit $P_x(Y)\in \bF_p[x][Y]$ le polynôme minimal de $y$ sur $\bF_p(x)$ (normalisé). Choisir un couple $(\bar{x},\bar{y})\in \bar{\bF}_p^2$ tel que $P_{\bar{x}}(\bar{y})=0$ produit un idéal maximal \[\fm_{(\bar{x},\bar{y})}=\ker(\bF_p[x,y]\to \bar{\bF}_p:(x,y)\mapsto (\bar{x},\bar{y})),\] et tous les idéaux maximaux de $\bF_p[x,y]$ sont de cette forme. 

Soit $d$ le degré de $P_x(Y)$ en $Y$. Soit $\fm$ un idéal maximal de $B$ provenant d'un point géométrique $(\bar{x},\bar{y})$. Puisque $\bar{y}$ est racine de $P_{\bar{x}}(Y)$, on a $d_\fm\leq d\cdot n_{\bar{x}}$. Alors, on a l'inégalité
\[
(n_{\bar{x}},n_{\bar{y}})=\frac{n_{\bar{x}}n_{\bar{y}}}{d_\fm}\geq \frac{n_{\bar{y}}}{d}.
\]
D'après le Lemme \ref{lem:quasi-surj-morph-spec}, le complémentaire de l'image de l'application $\operatorname{Spm}\bF_p[x,y] \rightarrow \operatorname{Spm} \bF[y] $ est un ensemble fini et on peut choisir $(\bar{x},\bar{y})$ de telle sorte que $n_{\bar{y}}$ soit arbitrairement grand, ce qui conclut.  
\end{enumerate}
\end{proof}

Les preuves du théorème et du lemme ont utilisé l'énoncé classique suivant :
\medskip
\begin{lemm}\label{lem:quasi-surj-morph-spec}
Soit $B\subset C$ deux algèbres intègres  de type fini sur $k$ un corps de dimension $1$, tous les idéaux premiers de $B$ sauf un nombre fini sont de la forme $\fP \cap B$ avec $\fP$ un idéal premier de $C$.
\end{lemm}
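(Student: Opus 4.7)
Le plan est d'appliquer le théorème de Chevalley sur les images constructibles, couplé à un argument de densité. D'abord, je remarquerais que, $B$ et $C$ étant deux $k$-algèbres de type fini, l'inclusion $B\subset C$ induit un morphisme de type fini $\varphi:\Spec C \to \Spec B$ entre schémas noethériens. Par le théorème de Chevalley, l'image de $\varphi$ est une partie constructible de $\Spec B$.

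Ensuite, je vérifierais que cette image est dense. Comme $B$ et $C$ sont intègres et l'inclusion $B\hookrightarrow C$ injective, l'idéal $(0)\subset C$ se restreint sur $(0)\subset B$: le point générique de $\Spec C$ s'envoie donc sur celui de $\Spec B$. L'image de $\varphi$ est ainsi une partie constructible et dense dans l'espace noethérien irréductible $\Spec B$, et contient par conséquent un ouvert dense $U$ de ce dernier (c'est un résultat classique sur les parties constructibles dans un espace noethérien irréductible).

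Je conclurais en observant que le complémentaire $\Spec B \setminus U$ est un fermé propre de $\Spec B$, donc de dimension strictement inférieure à $\dim B=1$, i.e. formé d'un nombre fini de points fermés. Puisque les idéaux premiers de $B$ sont soit le point générique (qui est dans l'image, cf. ci-dessus), soit des idéaux maximaux (points fermés), seul un nombre fini d'entre eux ne sont pas de la forme $\fP\cap B$ pour $\fP$ un premier de $C$, ce qui est exactement l'énoncé. L'argument est essentiellement standard; le principal point technique à bien justifier est le fait qu'une partie constructible dense dans un espace noethérien irréductible contient un ouvert dense, mais cela ne représente pas une réelle difficulté.
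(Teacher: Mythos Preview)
Ta démonstration est correcte. L'application du théorème de Chevalley est pertinente : $C$ est bien de type fini sur $B$ (les générateurs de $C$ sur $k$ le sont aussi sur $B$), le point générique de $\Spec B$ est dans l'image car $B\hookrightarrow C$ est injectif, et une partie constructible contenant le point générique d'un schéma noethérien irréductible contient bien un ouvert non vide (donc dense). La conclusion via la finitude des fermés stricts en dimension $1$ est sans problème, $B/I$ étant artinien pour tout idéal $I\neq(0)$.

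L'approche du texte est différente : plutôt que d'invoquer Chevalley comme boîte noire, il en redémontre essentiellement le cas particulier utile via la \emph{finitude générique}. Concrètement, on choisit des générateurs $(x_i)_i$ de $C$ sur $B$ ; comme $\Frac B\subset\Frac C$ est une extension finie (les deux corps sont de degré de transcendance $1$ sur $k$), chaque $x_i$ est annulé par un polynôme à coefficients dans $B$ de coefficient dominant $\lambda_i$. En posant $\lambda=\prod_i\lambda_i$, l'inclusion $B[1/\lambda]\hookrightarrow C[1/\lambda]$ devient \emph{finie}, donc induit une surjection $\Spec C[1/\lambda]\to\Spec B[1/\lambda]$ par le théorème de relèvement. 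L'ouvert cherché est alors explicitement $D(\lambda)$.

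Ton argument est plus court et plus conceptuel ; celui du texte est autonome (il n'utilise que le going-up pour les extensions finies) et fournit une description explicite de l'ouvert où la surjectivité a lieu. Les deux démarches sont standard et d'ailleurs intimement liées : la construction par coefficients dominants est précisément le c\oe ur de la preuve usuelle du théorème de Chevalley.
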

\medskip
\begin{proof}
Comme les fermés $V(\lambda)$ sont des ensembles finis dans $|\Spec B|$ pour tout élément $\lambda$ non inversible dans $B$ par dimension, il suffit de trouver un élément $\lambda$ pour lequel la flèche naturelle $|\Spec C[1/\lambda]|\to |\Spec B[1/\lambda]|$ est surjective. Pour trouver cet élément, on observe que $C$ est de type fini sur $B$ et on choisit une famille finie$(x_i)_i$ de générateurs. Comme toutes les algèbres sont de dimension $1$, on peut trouver un polynôme annulateur sur $B$ pour chacun de ces générateurs dont on note $\lambda_i$ le coefficient dominant. Par construction, l'inclusion $B[1/\prod_i \lambda_i]\to C[1/\prod_i \lambda_i]$ est finie et induit une flèche surjective au niveau des spectres,  l'élément $\lambda=\prod_i \lambda_i$ convient.
\end{proof}

\end{document}